\newtheorem{teo}{Theorem}[section]
\newtheorem{prop}[teo]{Proposition}
\newtheorem{lema}[teo]{Lemma}
\newtheorem{obs}[teo]{Remark}
\newtheorem{defnc}[teo]{Definition}
\newtheorem{coro}[teo]{Corollary}
\newcommand{\C}{{\mathbb C}}
\newcommand{\R}{{\mathbb R}}
\newcommand{\Z}{{\mathbb Z}}
\newcommand{\N}{{\mathbb N}}
\newcommand{\fol}{{\mathcal F}}
\newcommand{\diff}{{{\rm Diff}\, ({\mathbb C}, 0)}}
\newcommand{\Hol}{{{\rm Hol}\, ({\mathbb C}, 0)}}
\newcommand{\diffalpha}{{{\rm Diff}_{\alpha} ({\mathbb C}, 0)}}
\begin{document}

\title[Stabilizers of pseudogroups on $(\C ,0)$]{Cyclic stabilizers and infinitely many hyperbolic orbits for pseudogroups on $(\C ,0)$}

\author{Julio C. Rebelo \hspace{0.2cm} \& \hspace{0.2cm} Helena Reis}
\address{}
\thanks{}

\begin{abstract}
Consider a pseudogroup on $(\C,0)$ generated by two local diffeomorphisms having analytic conjugacy classes {\it a priori}\,
fixed in $\diff$. We show that a generic pseudogroup as above is such that every point has (possibly trivial) cyclic stabilizer.
It also follows that these generic groups possess infinitely many hyperbolic
orbits. This result possesses several applications to the topology of leaves of foliations and we shall explicitly
describe the case of nilpotent foliations associated to Arnold's singularities of type $A^{2n+1}$.
\end{abstract}

\maketitle

\noindent \hspace{0.9cm} {\small Key-words: pseudogroups on $(\C, 0)$  -  fixed points  -  cyclic stabilizers}

\bigskip

\noindent \hspace{0.9cm} {\small AMS-Classification:  37C85  -  37F75 -  34M35}

\section{Introduction}

This paper provides answers to two types of well-known problems about pseudogroups on $(\C,0)$. For the convenience of the reader,
this Introduction will begin with a detailed presentation of them. The first problem has to do with the existence
of hyperbolic fixed points for {\it certain elements}\, of pseudogroups generated by local diffeomorphisms fixing
$0 \in \C$. The reader is reminded that a fixed point
$p$ for a local diffeomorphism $f$ between open sets of $\C$ is said to be {\it hyperbolic}\, if and only if
$\Vert f' (p) \Vert \neq 1$. In particular these fixed points are necessarily isolated, i.e. $f$ cannot coincide
with the identity on a neighborhood of a hyperbolic fixed point $p$.
The existence of ``generic'' pseudogroups for which there are infinitely many points $p_i$,
$p_i \neq 0$ for every $i$, such that each $p_i$ is an {\it hyperbolic fixed point} for a certain element $h_i$ belonging to
the pseudogroup in question was first singled out by Y. Il'yashenko in his seminal paper \cite{ilyaso}. The meaning
of these fixed points for the topology of the leaves of the corresponding foliations is explained in
\cite{ilyaso}, \cite{ilyaso2}. The more recent paper \cite{scherbaRO} improves Il'yashenko's
result on the existence of the fixed points in question.
Another result of similar nature was
established in \cite{gomez-mont} and applies to certain pseudogroups obtained
through ``perturbations'' of pseudogroups generated by local diffeomorphisms fixing $0 \in \C$ (note that these ``perturbed''
pseudogroups need no longer have the origin as a common fixed point).

Among the most interesting cases where the existence of hyperbolic fixed points have been considered, there
is the case of pseudogroups generated by local diffeomorphisms fixing $0 \in \C$ and such that
the associated {\it group of germs at}\, $0 \in \C$ is not solvable. Here the
existence of elements in the corresponding pseudogroup exhibiting {\it hyperbolic fixed points}\, has been
known since \cite{scherba}, \cite{nakai} and \cite{Frank1}, see also \cite{frank} for a comprehensive discussion
of these non-solvable pseudogroups. However the question on
whether or not these pseudogroups exhibit more than {\it one single orbit}\, of hyperbolic
fixed points, at least in the case of ``typical'' pseudogroups, has remained open. This question deserves to be elaborated further.

Suppose $p \neq 0$ happens to be a hyperbolic fixed point for some element $h$ of a given pseudogroup. Then the orbit
of $p$ is fully constituted by points that are hyperbolic fixed points for suitable conjugates of $h$ in the pseudogroup
in question. The problem that is naturally raised in this context consists of deciding whether the pseudogroup
contains two or more hyperbolic fixed points whose orbits are {\it mutually disjoint}. This question has a natural motivation
going back to foliations: in fact, whereas the
existence of an ``orbit of hyperbolic fixed points'' implies the existence
of a leaf carrying hyperbolic holonomy, it falls short from ensuring the existence of ``many leaves'' with hyperbolic holonomy.
The existence of these leaves can only be asserted if hyperbolic fixed points with mutually disjoint orbits is guaranteed.

The other problem concerning $(\C,0)$ that will be tackled in this paper has a more typical ``generic nature''.
It concerns the existence of points that are fixed by ``more than one element in the pseudogroup''. More precisely,
if $p \in \C$ is fixed by some element $h$, then $p$ is obviously fixed by every iterate of $h$ as well. The question
is then about the existence of some $\overline{h}$ in the pseudogroup fixing $p$ and not (locally) coinciding with
an iterate of $h$. In other words, the question is whether or not every point different from $0 \in \C$ has cyclic
(possibly trivial) stabilizer.

The answers to the above mentioned problems given in this work sit naturally in the continuation of
\cite{MRR} as will be seen below. Some terminology is however needed before stating our main results.
First, given an element $f \in \diff$, we shall denote by $f^j$ its $j^{\rm th}$-iterate for $j \in \Z$
($f^0 = {\rm id}$ and $f^j = (f^{-1})^{\vert j\vert}$ for $j <0$). This definition has a clear meaning in
terms of germs whereas this meaning is less clear in the context of pseudogroups, cf. Section~2 for details.

Now consider $\diff$ equipped with the {\it analytic topology}\, (cf. Section~2) turning $\diff$ into a Baire
space. Denote by $\diffalpha$ the normal subgroup of $\diff$ consisting of those germs of diffeomorphisms
that are tangent to the identity to order $\alpha \in \N$ (if $\alpha =0$ then
$\diffalpha =\diff$). The subgroup $\diffalpha \subseteq \diff$ is closed for the analytic
topology and, in fact, it is a Baire space in its own right.
Next, let $\diffalpha \times \diffalpha$ be endowed with the product
analytic topology. Suppose we are given
two elements $f,g$ in $\diff$ and denote by $G_1$ (resp. $G_2$) the cyclic group generated
by $f$ (resp. $g$). Naturally, the groups $G_1, G_2$ may or may not be finite and their orders are respectively
the orders of the germs $f, g$ which are denoted by $\textsf{r}$ and $\textsf{s}$. In other words,
$\textsf{r}$ (resp. $\textsf{s}$) is the smallest strictly positive integer for which $f^{\textsf{r}} ={\rm id}$
(resp. $g^{\textsf{s}} ={\rm id}$).
If this integer does not exist, then we set $\textsf{r} =\infty$ (resp. $\textsf{s} = \infty$) and,
in this case, the group $G_1$ (resp. $G_2$) turns out to be infinite and isomorphic to $\Z$.

Now denote by ${\rm F}_2$ the free group on two generators $a,b$ and consider the natural evaluation morphism
from ${\rm F}_2$ to $\diff$ consisting of making the substitutions $a\mapsto f$ and $b \mapsto g$
(and of interpreting the ``concatenation of letters'' as composition of germs).
Let ${\rm N}$ be the {\it normal subgroup}\, of ${\rm F}_2$ generated by $\{ a^{\textsf{r}} ,
b^{\textsf{s}} \}$, with the convention that $a^{\infty} = b^{\infty}= {\rm id}$. The
quotient group ${\rm F}_2 /{\rm N}$ is isomorphic to the {\it free product}\, $G_1 \ast G_2$
of the groups $G_1, G_2$. Furthermore, the  above mentioned evaluation morphism
factors through the quotient ${\rm F}_2 /{\rm N}$ so as to induce a homomorphism $\mathcal{E}$
from $G_1 \ast G_2$ to $\diff$.

Another explicit construction for the homomorphism $\mathcal{E}$ consists of using the fact
that every element in the free product $G_1 \ast G_2$ is represented by a
{\it unique}\, reduced word in the letters $a, b$, where the empty-word
represents the identity (cf. Section~2 for further details). Therefore, the elements
of $G_1 \ast G_2$ are naturally identified to reduced words $W (a, b)$. With this notation,
$\mathcal{E} (W (a, b))$ is simply the element
of $\diff$ obtained by substituting $a \mapsto f$ and $b \mapsto g$ in the spelling of
$W (a, b)$ (where again the ``concatenation of letters'' becomes composition of
germs). In the sequel, the element of $\diff$ given by $\mathcal{E} (W (a, b))$
is going to be denoted by $W (f, g)$.

To state Theorem~A, recall that a local diffeomorphism $f$ fixing $0 \in \C$ is linearizable
if and only if it is conjugate to the linear map $z \mapsto f'(0) \, z$ by a local holomorphic change of coordinates,
where $f'(0)$ stands for the derivative of $f$ at $0 \in \C$. This local diffeomorphism is said to have
a {\it Cremer point}\, (at $0 \in \C$) if it is not linearizable and if $f'(0)$ has norm~$1$
but is not a root of unity. Suppose now that we are given local diffeomorphisms $f,g$ such that none of
them has a Cremer point at $0 \in \C$. If $W(a,b)$ is a non-empty reduced word in $a,b$ (w.r.t
$G_1 \ast G_2$), the following was shown in \cite{MRR} (cf. Theorem~\ref{teo_MRR}):
there is a $G_{\delta}$-dense $\mathcal{V} \subset \diffalpha \times \diffalpha$ such that, whenever $(h_1, h_2)
\in \mathcal{U}$, the element $W (h_1^{-1} \circ f \circ h_1, h_2^{-1} \circ g \circ h_2)$ of the pseudogroup generated by
$h_1^{-1} \circ f \circ h_1, h_2^{-1} \circ g \circ h_2$ does not coincide with the identity on any
connected component of its domain of definition. Equivalently, all fixed points of
$W (h_1^{-1} \circ f \circ h_1, h_2^{-1} \circ g \circ h_2)$ are isolated, though not necessarily hyperbolic.
Here we shall improve on this result by proving Theorem~A below.

In the sequel $\alpha \in \N$ is fixed as well as local diffeomorphisms $f,g$. Given local diffeomorphisms
$h_1, h_2$, we denote by $\Gamma_{h_1,h_2}$ the pseudogroup generated by
$h_1^{-1} \circ f \circ h_1, h_2^{-1} \circ g \circ h_2$ on some local neighborhood of $0 \in \C$ (to be chosen
later).

\vspace{0.1cm}

\noindent {\bf Theorem A}. {\sl
Suppose we are given $f,g$ in $\diffalpha$ and denote by $D$ an open disc about $0 \in \C$ where $f, g$ and their inverses are
defined. Assume that none of the local diffeomorphisms $f, g$ has a Cremer point at $0 \in \C$.
Then, there is a $G_{\delta}$-dense set $\mathcal{U} \subset \diffalpha \times \diffalpha$ such that,
whenever $(h_1, h_2)$ lies in $\mathcal{U}$, the
pseudogroup $\Gamma_{h_1,h_2}$ generated by $\tilde{f} = h_1^{-1} \circ f \circ h_1, \, \tilde{g} =
h_2^{-1} \circ g \circ h_2$ on $D$ satisfies the following:
\begin{enumerate}
\item The stabilizer of every point $p \in D$ is either trivial or cyclic.

\item There is a sequence of points $\{ Q_i\}$, $Q_i \neq 0$ for every $i \in \N^{\ast}$, converging to $0 \in \C$
and such that every $Q_n$ is a hyperbolic fixed point of some element $W_i (\tilde{f}, \tilde{g}) \in \Gamma_{h_1,h_2}$.
Furthermore the orbits under $\Gamma_{h_1,h_2}$ of $Q_{n_1}, \, Q_{n_2}$ are disjoint provided that $n_1 \neq n_2$.
\end{enumerate}
}

\vspace{0.1cm}

Note that, in the statement of Theorem~A, the analytic conjugacy classes of $f$ and $g$ in $\diff$
are supposed to be fixed. This condition is naturally imposed by the use of the Krull topology in classical problems
about singular foliations, cf. below.

The assumption that neither $f$ nor $g$ has a Cremer point at $0 \in \C$
is indeed necessary for the statement of Theorem~A to hold, cf. Section~2. On the other hand,
the theorem holds equally well for pseudogroups generated by every finite collection of local diffeomorphisms.
Some comments on possible improvements of Theorem~A also deserve to be included here. Essentially these improvements concern
extensions ``generic/general'' for the corresponding statement. As to item~(2), it is conceivable that the existence of ``more than
one'' orbit of hyperbolic fixed points may be verified for every non-solvable pseudogroup of $\diff$. On the other hand,
item~(1) is unlikely to be a general phenomenon and we believe that only a ``generic'' affirmative answer can be
expected. A more optimistic perspective might suggest that an affirmative answer can still hold true for ``open and
dense sets'' though this already seems a bit unlikely to happen. In any event, in order to make further progress in
this type of questions, it seems clear that the next step is to investigate the possible existence of ``Closing lemmas''
for the mentioned pseudogroups.

Theorem~A has a few consequences on the topology of the leaves of a foliation on
a neighborhood of an invariant curve and/or a neighborhood of the singular point since in most well-known
problems this information can be codified into the holonomy pseudogroup associated to the invariant curve
(or to the reduction of the singular point). An important class of applications stems from classical problems
abut deciding the topology of leaves for a (local) ``generic'' foliation, where ``generic'' is understood in terms
of dense sets for the Krull topology (see \cite{lefloch}, \cite{marinmattei}, \cite{MRR}, \cite{matteisalem}). These
applications are by now standard so that they will not be detailed in this paper (apart from stating Corollary~B).
On the other hand, for the convenience of non-experts, let us explain why the use of Krull topology forces us to
fix the conjugacy classes of the initial local diffeomorphisms. For this, suppose that $\fol$
is the singular foliation associated to the local orbits of a holomorphic vector field $X$ defined on a neighborhood
of $(0,0) \in \C^2$. Let $\widetilde{\fol}$ denote the transform of $\fol$ by a suitable birational transformation so that
$\widetilde{\fol}$ is defined on a neighborhood of an invariant divisor. Suppose that the singularities
of $\widetilde{\fol}$ are all {\it hyperbolic}\, (an assumption frequently satisfied). Note that, in the present
context, a singular point $p$ of $\widetilde{\fol}$ is said to be hyperbolic if $\widetilde{\fol}$ can locally be given
by a holomorphic vector field $Y$ whose linear part at $p$ has non-zero eigenvalues $\lambda_1, \lambda_2$ verifying
$\lambda_1/\lambda_2 \in \C \setminus \R$. A classical problem in differential equations/singularity theory considers
dynamical/topological properties of foliations $\fol$ as before that are satisfied by Krull-dense sets of foliations.
In practice, this means that we should look for properties that are verified by foliations $\fol'$ given by vector fields
$X'$ having the same Taylor series as $X$ up to an arbitrarily fixed order. Since the invariant divisor of
$\widetilde{\fol}$ as well as the position of its singular points and the values of corresponding eigenvalues are determined
by some finite jet of $X$, these data cannot be changed in the construction of the ``perturbed'' foliation. Thus
the holonomy maps arising from corresponding singular points of $\widetilde{\fol}$ and of the ``perturbed foliation''
are necessarily conjugate to each other since they are both linearizable in view of Poincar\'e theorem. In other words,
while constructing foliations ``near'' to $\fol, \widetilde{\fol}$, the analytic conjugacy classes of the mentioned
holonomy maps are necessarily fixed.

In general, the transverse structure of a (singular) holomorphic foliation is described through the pseudogroup
associated to the holonomy with respect to an invariant curve. The existence of hyperbolic fixed points, for example,
is strictly related to the presence of hyperbolic limit cycles for the foliation in question. Though we have
mentioned hyperbolic singularities to illustrate the interest of having fixed analytic conjugacy classes in the
statement of Theorem~A, we shall explicitly state an application of this theorem in a rather ``orthogonal'' setting
where singularities are far from hyperbolic. This setting corresponds to the much studied case of nilpotent
foliations associated to Arnold $A^{2n+1}$ singularities.

Recall that a {\it nilpotent foliation}\, about $(0,0) \in \C^2$ is the singular foliation associated to the local
orbits of a (local) holomorphic vector field $X$ having an isolated singularity at $(0,0) \in \C^2$ where the linear
part of $X$ at $(0,0)$ is nilpotent (different from zero). Consider then nilpotent foliations
possessing a unique separatrix which is given by a cusp of the form $\{y^2 + x^{2n+1} = 0\}$, i.e.
the separatrix is an analytic curve locally equivalent to the mentioned cusp.
Foliations in this class are called nilpotent foliation $\fol$ of type $A^{2n+1}$.
In \cite{MRR}, it was proved that a generic nilpotent foliation $\fol$ of type $A^{2n+1}$ possesses
only countably many non-simply connected leaves.

Let $X \in \mathfrak{X}_{(\C^2,0)}$ be a holomorphic vector field with an isolated singularity at the origin and defining
a germ of nilpotent foliation $\fol$ of type $A^{2n+1}$.
By relying on the construction carried out in Section~5 of \cite{MRR}, Theorem~A yields:

\vspace{0.1cm}

\noindent {\bf Corollary B (Cusps)}. {\sl For every (sufficiently large) $N \in \N$, there exists a vector field
$X^{\prime} \in \mathfrak{X}_{(\C^2,0)}$ defining a germ of a foliation $\fol^{\prime}$ and satisfying the
following conditions:
\begin{itemize}
\item[(1)] $J_0^N X^{\prime} = J_0^N X$ (i.e. the vector field $X, \, X^{\prime}$ are tangent to order $N$ at the origin).

\item[(2)] The foliations $\fol$ and $\fol^{\prime}$ have $S$ as a common separatrix.

\item[(3)] There exists a fundamental system of open neighborhoods $\{U_n\}_{n \in \N}$ of $S$, inside a
closed ball $\bar{B}(0,R)$, such that for all $n \in \N$, the leaves of the restriction of $\fol^{\prime}$
to $U_n \setminus S$ are simply connected except for a countable set of them.

\item[(4)] The countable set mentioned above is, in fact, infinite.

\item[(5)] Every leaf of the restriction of $\fol^{\prime}$ to $U_n \setminus S$ is either simply-connected or
topologically equivalent to a cylinder.

\end{itemize}
}

Compared to the analogous statement in \cite{MRR}, the improvements made in this paper lies in items~(4) and~(5).
The remaining items make up the previous result in \cite{MRR} and depend solely on the fact that,
for a ``generic choice'' of $(h_1,h_2) \in \diffalpha \times \diffalpha$, every element in
the resulting pseudogroup $\Gamma_{h_1,h_2}$ coinciding with the identity on a
non-empty open set must coincide with the identity on all of its domain of definition. Naturally items~(4)
and~(5) are implied by our Theorem~A.

The proof of Theorem~A naturally starts from the above mentioned result for the pseudogroup $\Gamma_{h_1,h_2}$.
However it requires a rather different type of analysis which is vaguely reminiscent from the classical Kupka-Smale
theorems or, more precisely, with the part of its statement asserting that ``generic''
diffeomorphisms have only hyperbolic periodic points, cf. for example \cite{katok}. In closing this
Introduction, let us give a brief outline of
the structure of the proof of Theorem~A. Compared to Kupka-Smale statement, the first difference is naturally the fact
that we are dealing with a pseudogroup of local diffeomorphisms rather than with a single globally defined diffeomorphism.
This requires us to pay special attention to domains of definitions as well as to the boundary behavior of maps.
In Section~2, an appropriate setting to handle these pseudogroups will be worked out in detail.

The main analogy with Kupka-Smale theorem appears at the level of ``stability'' of hyperbolic fixed points. The idea
of ``stability'' for hyperbolic fixed points is materialized by saying that once a (local) diffeomorphism
possessing a (unique) hyperbolic fixed point is perturbed, the new diffeomorphism will also possess
a (unique) hyperbolic fixed point which, in addition, is ``near'' the initial fixed point. In our holomorphic
context, there is no need to worry about fixed points being hyperbolic since the desired ``stability'' property
is verified by ``multiplicity one'' fixed points of (local) holomorphic diffeomorphisms. In fact,
this type of ``stability'' follows at once from the
Argument principle (or Rouch\'e theorem). It is this application of the Argument principle
that, ultimately, will allow us to keep track of the number of fixed points that may arise when a diffeomorphism
having only isolated fixed points is perturbed. This material is carefully developed in Section~3 which also contains
the proof of Theorem~A modulo Proposition~\ref{disjointfixedpoints} whose proof, at this point, will essentially be reduced to
the construction of certain types of perturbations for pairs of local diffeomorphisms $(h_1,h_2) \in \diffalpha
\times \diffalpha$. These constructions will then be detailed in Sections~4 and~5. In particular, Section~4
begins with an outline of the structure of the proof of Proposition~\ref{disjointfixedpoints} itself.
As it is to be expected, our
``perturbations'' have no analogue in the Kupka-Smale context, not only due to the holomorphic nature of our
problem, but mainly due to the fact that the conjugacy classes for the initial diffeomorphisms $f,g$ have to be
fixed during all the procedure.

\vspace{0.1cm}

\noindent {\bf Acknowledgments}. We are very grateful to J.-F. Mattei for several discussions about the content
of this paper as well as for having encouraged us to work on this problem.

The authors thank the IMPA and the UMI-CNRS for their hospitality during the preparation of this paper. The first author
also thanks CNPq-Brazil for financial support. The second author thanks FCT for financial support through
CMUP and through the project PTDC/MAT/103319/2008.

\section{General set up}

In the sequel, let $\diff$ stand for the group of local holomorphic diffeomorphisms fixing $0 \in \C$ whereas
$\diffalpha$ stands for its normal subgroup consisting of elements tangent to the identity to order~$\alpha$
($\alpha =0$ corresponds to $\diff$ itself). Also, let $\Hol$ denote
the space of (germs of) holomorphic functions defined about $0 \in \C$. Clearly $\diff \subset \Hol$ and an
element $f \in \Hol$ belongs to $\diff$ if and only if $f'(0) \neq 0$.

Let us equip both $\diff$ and $\Hol$ with the so-called analytic topology (or $C^{\omega}$-topology)
that was first considered by Takens \cite{takens} in the context of real analytic diffeomorphisms of an analytic manifold who
also observed that it inherits of the Baire property. The definition of the analytic topology
can naturally be adapted to $\Hol$ and it was shown in \cite{MRR} that it turns $\Hol$ into a complete metric space.
The metric is defined as follows. Suppose that  $f, \, g$
in $\Hol$ are given and consider the holomorphic function $f-g$ which is defined in a neighborhood of
$0 \in \C$. Denote by $c_1 x + c_2 x^2 + \cdots$ the Taylor series of $f-g$ at $0 \in \C$. The metric
$d_A$ inducing the analytic topology in $\Hol$ is then given by
\[
d_A (f,g) = \sup_{k \in \N} \Vert c_k \Vert^{1/k} \, .
\]
Being a complete metric space, $\Hol$ has the Baire property. Since $\diff$ is clearly an open and dense
subset of $\Hol$, we recover the fact that $\diff$ has the Baire property as well. Furthermore, if $\alpha \geq 1$,
$\diffalpha$ is a complete metric space in its own right and hence it also possesses the Baire property.

Let $f, \, g$ be given elements in $\diff$ and denote by $G_1, \, G_2$ the cyclic group generated by $f, \, g$,
respectively. Fix a sufficiently small {\it open}\, disc $D$ about $0 \in \C$ where $f, \, g$ and their inverses are
defined. To begin the discussion concerning Theorem~A a few notions need to be recalled. Also, while
in the statement of Theorem~A and in the discussion below, we shall restrict our attention
to pseudogroups generated by two local diffeomorphisms $f,g$, all the arguments immediately carry over to
pseudogroups generated by a finite number of local diffeomorphisms.

Let us begin by recalling the formal notion of a reduced word in two letters. Let $f, \, g \in \diff$ be two
holomorphic diffeomorphisms fixing the origin of $\C$ and assume that they are both different from the identity. Denote
by $\textsf{r}$ (resp. $\textsf{s}$) the order of $f$  (resp. $g$), namely $\textsf{r} \in \N^{\ast}$ (resp. $\textsf{s}
\in \N^{\ast}$) is the smallest strictly positive integer for which $f^\textsf{r} = {\rm id} \in \diff$ (resp.
$g^\textsf{s} = {\rm id} \in \diff$). If $\textsf{r}$ (resp. $\textsf{s}$) does not exist, then the order
of $f$  (resp. $g$) is said to be~$\infty$. We shall write $\textsf{r} = \infty$ (resp. $\textsf{s} =
\infty$) to refer to the latter case and $\textsf{r} < \infty$ (resp. $\textsf{s} < \infty$) to indicate
the former one. If $\textsf{r}$ (resp. $\textsf{s}$) equals~$\infty$, then, by convention, $\Z/\textsf{r}
\Z$ (resp. $\Z /\textsf{s} \Z$) is isomorphic to $\Z$. In terms of the mentioned presentation, a {\it reduced
word}\, in the letters $a,b$ (sometimes also said in the letters $a,a^{-1}, b, b^{-1}$) is a word $W (a,b)$
whose spelling has the form $\vartheta_l^{r_l} \ast \cdots \ast \vartheta_1^{r_1}$ with the following rules
being respected:
\begin{enumerate}
  \item $\vartheta_i$ takes on the values $\{ a, b\}$.
  \item If $\vartheta_{i_0}$ takes on the value $a$ (resp. $b$) then $\vartheta_{i_0-1}$ and
  $\vartheta_{i_0 +1}$ take on the value $b$ (resp. $a$) provided that $\vartheta_{i_0-1}$ and
  $\vartheta_{i_0 +1}$ are defined.

  \item If $\vartheta_i$ takes on the value $a$, then $r_i$ takes values in the set
  $\{ 1, \ldots ,
  \textsf{r} -1 \}$ provided that $\textsf{r} < \infty$. If $\textsf{r} = \infty$, then $r_i$ takes values
  in $\Z^{\ast}$ (it is understood that, for $r_i < 0$, $a^{r_i}$ means $(a^{-1})^{\vert r_i
  \vert}$).

  \item Similarly, if $\vartheta_i$ takes on the value $b$, then $r_i$ takes values in the set $\{ 1, \ldots ,
  \textsf{s} -1 \}$ provided that $\textsf{s} < \infty$. If $\textsf{s} = \infty$, then $r_i$ takes values
  in $\Z^{\ast}$ (where $b^{r_i}$ means $(b^{-1})^{\vert r_i \vert}$ whenever
  $r_i < 0$).
\end{enumerate}

With the previous notations, let us consider the free product $G_1 \ast G_2$
between $G_1$ and $G_2$. In terms of presentation, this group is isomorphic
to the group defined by $\{ a, b \, ; \, a^{\textsf{r}} = b^{\textsf{s}} = {\rm id} \}$, where the relation
$a^{\textsf{r}} = {\rm id}$ (resp. $b^{\textsf{s}} = {\rm id}$) is understood to be void if $\textsf{r}
= \infty$ (resp. $\textsf{s} = \infty$). In the sequel we shall use this presentation to refer to the
free product $G_1 \ast G_2$.
It should be noticed that every element in $\{ a, b \, ; \,
a^{\textsf{r}} = b^{\textsf{s}} = {\rm id} \}$ is represented by a unique reduced word $W(a,b)$ (where
the neutral element corresponds to the empty word).

The fundamental object involved in the subsequent discussion is the notion of {\it pseudogroup} generated by local
diffeomorphisms $f$ and $g$ about $0 \in \C$. For local diffeomorphisms $f,g$ as above, consider an
open disc $D$ about $0\in \C$ where $f,f^{-1},
g, g^{-1}$ are all defined and one-to-one. We want to consider the {\it pseudogroup
$\Gamma = \Gamma (f,g,D)$ generated by $f,f^{-1}, g, g^{-1}$ on $D$} (in the sequel this pseudogroup will be referred
to as being generated by $f,g$ and their inverses or simply by $f,g$, when no confusion is possible).
An accurate definition of this pseudogroup is required for the subsequent discussion. Consider a fixed word $W (a,b) =
\vartheta_l^{r_l} \ast \cdots \ast \vartheta_1^{r_1}$. In the sequel, unless otherwise
stated, every word is supposed to be reduced with respect to the group
$\{ a, b \, ; \, a^{\textsf{r}} = b^{\textsf{s}} = {\rm id} \}$, with the
previously defined conventions about $\textsf{r}, \, \textsf{s}$. In some cases, we shall also consider
the spelling of $W (a,b)$ arising from splitting the components $\vartheta_i^{r_i}$. More precisely, when both
$\textsf{r}$ or $\textsf{s}$ equals $\infty$, since the exponents $r_j$ can be negative, the mentioned
splitting of the components $\vartheta_i^{r_i}$ takes on the form $\theta_s \ast \cdots \ast \theta_1$, where:
\begin{itemize}
  \item $\theta_j$ takes on one of the values $a,b, a^{-1}, b^{-1}$
  \item if $\theta_j$ takes on the value $a$ (resp. $a^{-1}$) then, whenever defined, neither $\theta_{j-1}$ nor
  $\theta_{j+1}$ takes on the value~$a^{-1}$ (resp. $a$). A similar rule applies to $b, b^{-1}$.
\end{itemize}
In the cases where both $\textsf{r}, \, \textsf{s} < \infty$, $\theta_j$ takes only on the values $a,b$ and every sequence
$\theta_i, \theta_{i+1},\ldots$ of ``$\theta_i$'' with the same value is contained in the split of some $\vartheta_j^{r_j}$
in the natural sense. Adaptations to the mixed cases $\textsf{r} < \infty, \, \textsf{s}= \infty$ or $\textsf{r} = \infty,
\, \textsf{s}< \infty$ are straightforward and left to the reader. In any event, we obtain $s = \sum_{i=1}^l \vert r_i \vert$.
Now, consider the corresponding local diffeomorphism $W(f,g)$ written under the form $H_{s} \circ \cdots \circ H_{1}$ where each
$H_{i}$, $i \in \{1, \ldots ,s\}$, belongs to the set $\{ f^{\pm 1},g^{\pm 1}\}$. In other words, $H_i$ replaces $\theta_i$ by
means of the substitutions $f^{\pm 1} \mapsto a^{\pm 1}$, $g^{\pm 1} \mapsto b^{\pm 1}$. The {\it domain of definition}\, of
$W(f,g) =H_{s} \circ \cdots \circ H_{1}$ as an element of $\Gamma$ can be introduced by recursively defining the domains of
definitions of each element $H_{i} \circ \cdots \circ H_{1}$ of $\Gamma$, $i=1, \ldots ,s$, as follows:
\begin{itemize}
\item The domain of definition of $H_1$ is all of $D$ and, for every $z \in D$, $H_1 (z)$ is defined in the obvious way.

\item Suppose that the domain of definition ${\rm Dom}_{H_{i} \circ \cdots \circ H_{1}}$
of $H_{i} \circ \cdots \circ H_{1}$ is already known and that the image $H_{i} \circ \cdots \circ H_{1} (z)$
of $z \in {\rm Dom}_{H_{i} \circ \cdots \circ H_{1}}$ under $H_{i} \circ \cdots \circ H_{1}$ is also defined.
Then the domain of definition
${\rm Dom}_{H_{i+1} \circ \cdots \circ H_{1}}$ of $H_{i+1} \circ \cdots \circ H_{1}$ is obtained by setting
$$
{\rm Dom}_{H_{i+1} \circ \cdots \circ H_{1}} = \{ z \in {\rm Dom}_{H_{i} \circ \cdots \circ H_{1}} \; \; ; \; \;  H_{i} \circ \cdots \circ H_{1} (z) \in D \} \, .
$$
\end{itemize}
In particular, ${\rm Dom}_{H_{i+1} \circ \cdots \circ H_{1}} \subseteq {\rm Dom}_{H_{i} \circ \cdots \circ H_{1}}$ and hence
the domain of definition of every element in $\Gamma$ is naturally contained in $D$. Besides, for $z \in {\rm Dom}_{H_{i+1}
\circ \cdots \circ H_{1}}$, the value of $H_{i+1} \circ \cdots \circ H_{1} (z)$ is defined by setting $H_{i+1} \circ  \cdots
\circ H_{1} (z) = H_{i+1} \circ [H_{i} \circ \cdots \circ H_{1}] (z)$.

Let $f,g$ be local diffeomorphisms as above and consider now a disc $D$ whose closure $\overline{D}$ is contained in
a larger (open) disc where $f,g$ and their inverses are well-defined one-to-one maps. This disc $D$ will be fixed
in the sequel and its boundary is going to be denoted by $\partial D$.
With the previously defined notations, suppose we are given a word $W (a,b) =
\vartheta_l^{r_l} \ast \cdots \ast \vartheta_1^{r_1} = \theta_s \ast \cdots \ast \theta_1$ which, as always, is supposed
to be non-empty and reduced w.r.t. the group $\{ a, b \, ; \, a^{\textsf{r}} = b^{\textsf{s}} = {\rm id} \}$.
Given local diffeomorphisms $h_1, h_2 \in \diffalpha$, denote by $\tilde{f}, \, \tilde{g}$ the local diffeomorphisms
given by $\tilde{f} =h_1^{-1} \circ f \circ h_1$ and $\tilde{g} = h_2^{-1} \circ g \circ h_2$. In terms of pseudogroups
generated by $f,g$ and by $\tilde{f}, \tilde{g}$, note that the domains of definition of the corresponding elements
$W(f,g)$ and $W(\tilde{f}, \tilde{g})$ may drastically differ. To make sense of all these pseudogroups on the fixed disc
$D$, or on the closed disc $\overline{D}$, we need the following definition.

\begin{defnc}
\label{severaldomains}
\begin{enumerate}
\item The domain of definition of $h_1$ is defined as follows: let $\rho$ be the radius of the maximal open disc about
$0 \in \C$ in which $h_1$ is defined and one-to-one. Then the open domain of definition of $h_1$ is defined to be the
open disc of radius $9\rho/10$. The closed domain of definition of $h_1$ will also be considered and this will be nothing
but the closed disc of radius $9\rho/10$. Analogous definitions apply to each of the local diffeomorphisms: $h_1^{-1}, \,
h_2, \, h_2^{-1}$.

\item The domain of definition of $\tilde{f}=h_1^{-1} \circ f \circ h_1$ consists of those points $p$ verifying all the
following conditions: $p$ belongs to the open domain of definition of $h_1$, $h_1(p)$ belongs to the domain of definition
of $f$, i.e. to $D$ . Besides $f\circ h_1(p)$ must belong to the open domain of definition of $h_1^{-1}$. Analogous
considerations apply to the domain of definition of $\tilde{g}=h_2^{-1} \circ g \circ h_2$ and to $\tilde{f}^{-1},
\, \tilde{g}^{-1}$.

\item Finally, considering the pseudogroup generated by $\tilde{f}, \, \tilde{g}$ on the open disc $D$, the domain
of definition of the element $W (h_1^{-1} \circ f \circ h_1, \, h_2^{-1} \circ g \circ h_2) = W (\tilde{f}, \, \tilde{g})$
is obtained according to the above given general definitions concerning pseudogroups, with $\tilde{f}, \, \tilde{g}$ in the
place of $f, \, g$.

\item The (closed) domain of definition of $\tilde{f}=h_1^{-1} \circ f \circ h_1,\,
\tilde{g}=h_1^{-1} \circ g \circ h_1$ on $\overline{D}$ is defined according to item~(2) by using closed domains
of definition for $f,g$ as well as for $h_1, h_1^{-1}, h_2,h_2^{-1}$. Finally the pseudogroup generated by
$\tilde{f}, \tilde{g}$ on the closed disc $\overline{D}$ is such that the domain of definition of a general element
$W (h_1^{-1} \circ f \circ h_1, \, h_2^{-1} \circ g \circ h_2) = W (\tilde{f}, \, \tilde{g})$ is obtained by
starting with closed domains of definition for $\tilde{f}, \tilde{g}$ and following the general pseudogroup rules.
\end{enumerate}
\end{defnc}

Once $(h_1, h_2) \in \diffalpha \times \diffalpha$ is chosen, the pseudogroup generated by
$\tilde{f}=h_1^{-1} \circ f \circ h_1$ and by $\tilde{g}=h_1^{-1} \circ g \circ h_1$ on the open disc $D$
is going to be denoted by $\Gamma_{h_1,h_2}$. Consider now an element $W (\tilde{f}, \, \tilde{g})$
of $\Gamma_{h_1,h_2}$ and denote by ${\rm Dom}_W (D)$ its domain of definition. The local diffeomorphism
$W (\tilde{f}, \, \tilde{g})$ may also be considered as an element of the pseudogroup generated by
$\tilde{f}, \tilde{g}$ on the closed disc $\overline{D}$ and, in this case, it has a new domain of definition
${\rm Dom}_W (\overline{D})$. Clearly the topological closure $\overline{{\rm Dom}_W (D)}$ of ${\rm Dom}_W (D)$
is contained in ${\rm Dom}_W (\overline{D})$ whereas, in principle, these two sets may be distinct.
However, in what follows, we shall primarily be interested in the pseudogroup
$\Gamma_{h_1,h_2}$ and, when studying its elements, it will be important to consider their local behaviors around
points in $\overline{{\rm Dom}_W (D)}$. Thus possible points
lying in ${\rm Dom}_W (\overline{D}) \setminus \overline{{\rm Dom}_W (D)}$ will play no role in the subsequent
discussion. In view of this and to abridge notations, by a small abuse of language we shall
consider that elements in the pseudogroup generated by
$\tilde{f}=h_1^{-1} \circ f \circ h_1$ and by $\tilde{g}=h_1^{-1} \circ g \circ h_1$ on the closed disc
$\overline{D}$ are defined only on $\overline{{\rm Dom}_W (D)}$. In other words, we shall write
${\rm Dom}_W (\overline{D})$ to denote the domain of definition in question and, throughout the paper,
${\rm Dom}_W (\overline{D})$ will be assumed to coincide with the closure of ${\rm Dom}_W (D)$.

The pseudogroup generated by $\tilde{f}, \tilde{g}$
on the (closed) disc $\overline{D}$ will be denoted by $\overline{\Gamma}_{h_1,h_2}$.
Since $0 \in \C$ is fixed by $f,g$, we conclude that every word has a non-empty domain of definition as element of
both $\Gamma_{h_1,h_2}$ and $\overline{\Gamma}_{h_1,h_2}$.
Furthermore, since non-constant holomorphic maps are open maps,
the domain of definition of every element $W (\tilde{f}, \, \tilde{g})$ in
$\Gamma_{h_1,h_2}$ is necessarily an open set.
It may, however, be {\it disconnected}.

The above given definition of $\overline{\Gamma}_{h_1,h_2}$ involves a technical issue playing a significant
role in the forthcoming sections which is singled out by the following lemma.

\begin{lema}
\label{extensionoverclosure}
Let $W (\tilde{f}, \, \tilde{g})$ be an element in $\overline{\Gamma}_{h_1,h_2}$ whose domain of definition
is denoted by ${\rm Dom}_W (\overline{D})$. Suppose that $U$ is a connected component of ${\rm Dom}_W (\overline{D})$.
Then $U$ is a closed set and, moreover, the holomorphic map $W (\tilde{f}, \, \tilde{g}) : U \rightarrow \C$
possesses a holomorphic extension to some open neighborhood of $U$.
\end{lema}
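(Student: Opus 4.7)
The plan is to proceed by induction on the length $s$ of the decomposition $W(\tilde{f},\tilde{g})=H_s\circ\cdots\circ H_1$, where each $H_i\in\{\tilde{f}^{\pm 1},\tilde{g}^{\pm 1}\}$. The key structural observation, built into Definition~\ref{severaldomains}, is that the closed domain of each elementary map ($h_1,h_1^{-1},h_2,h_2^{-1}$, and $f,g,f^{-1},g^{-1}$) is strictly contained in a larger open set where that map is holomorphic and injective: for instance, $h_1$ is one-to-one on a disc of radius $\rho$ whereas its closed domain is the closed disc of radius $9\rho/10$. Consequently, each of $\tilde{f}^{\pm 1}, \tilde{g}^{\pm 1}$ admits a holomorphic extension to some open neighborhood of its compact closed domain of definition in $\overline{D}$.

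First I would formulate the inductive claim: setting $F_i=H_i\circ\cdots\circ H_1$ and letting $\Omega_i\subset\overline{D}$ denote its domain of definition as an element of $\overline{\Gamma}_{h_1,h_2}$, the set $\Omega_i$ is compact, and $F_i$ extends to a holomorphic map $\tilde{F}_i$ on an open set $V_i\supset\Omega_i$. The base case $i=1$ reduces to the key observation together with the fact that $\Omega_1$ is an intersection of preimages of compact sets under continuous maps, hence compact.

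For the inductive step, assume the claim for $i$. Let $\Omega^{(i+1)}$ be the closed domain of $H_{i+1}$ and let $W_{i+1}$ be an open neighborhood on which $H_{i+1}$ extends holomorphically as $\tilde{H}_{i+1}$. Using compactness of $\Omega^{(i+1)}$, I would choose an intermediate open set $W_{i+1}'$ satisfying $\Omega^{(i+1)}\subset W_{i+1}'\subset\overline{W_{i+1}'}\subset W_{i+1}$. Then $\Omega_{i+1}=\{z\in\Omega_i:F_i(z)\in\Omega^{(i+1)}\}$ is closed in $\Omega_i$, hence compact, and the open set $V_{i+1}=V_i\cap\tilde{F}_i^{-1}(W_{i+1}')$ contains $\Omega_{i+1}$. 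On $V_{i+1}$ the composition $\tilde{H}_{i+1}\circ\tilde{F}_i$ is holomorphic and provides the required extension of $F_{i+1}$.

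Applying the claim with $i=s$, the set $\mathrm{Dom}_W(\overline{D})=\Omega_s$ is compact (in particular closed), and $W(\tilde{f},\tilde{g})$ extends holomorphically to the open neighborhood $V_s$ of $\Omega_s$. Since $U$ is a connected component of the closed set $\Omega_s$, $U$ is itself closed, and $V_s$ is an open neighborhood of $U$, giving the conclusion. The main difficulty is purely bookkeeping — organising the shrinking of open neighborhoods so that each intermediate image stays inside the next map's holomorphic domain — and there is no genuine analytic obstacle, since the strict inclusion of closed domains inside open ones provides exactly the buffer needed at every composition step.
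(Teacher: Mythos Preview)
Your proposal is correct and follows essentially the same approach as the paper: the paper's proof is a two-sentence sketch that simply invokes the buffer built into Definition~\ref{severaldomains} (closed domains strictly inside larger open domains where the elementary maps are defined and injective) together with the fact that $f,g$ extend past $\overline{D}$, while you spell out the resulting inductive argument explicitly. Your compactness argument for $\Omega_s$ is a slightly stronger way of obtaining the closedness of $U$ than the paper's appeal to openness of the holomorphic maps, but the underlying mechanism---tracking domains through the composition using the strict inclusions---is the same.
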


\begin{proof}
The fact that $U$ is closed follows again from the fact that the maps $\tilde{f}, \tilde{g}$ are holomorphic
and hence open (since non-constant). The existence of the desired holomorphic extension follows from the fact
that $f,g$ have holomorphic extensions to a neighborhood of $\overline{D}$ along with the construction
of closed domains of definition for $h_1, h_1^{-1}, h_2, h_2^{-1}$ given in
Definition~\ref{severaldomains}. The lemma is proved.
\end{proof}

Considering the pseudogroups as above, there is already a point to be made about the iterates $f^j,\, g^j$,
of $f,\, g$, which should themselves be understood as elements of suitable
pseudogroups defined on some open neighborhood of $0 \in
\C$. More generally, given $F \in \diff$ and fixed a neighborhood $D$ of $0 \in \C$ where $F$ is defined, the notation
$F^j$, where $j \in \Z^{\ast}$, refers to the element $F^j$ viewed as an element of the pseudogroup generated by $F$ on $D$.
Now, by combining Poincar\'e theorem on the existence of linearizing coordinates about hyperbolic fixed points
to the well-known topological dynamics
of diffeomorphisms tangent to the identity (see for example \cite{carleson}), the following lemma immediately follows.

\begin{lema}
\label{nocremer}
Suppose that $F \in \diff$ is a local diffeomorphism that does not have a Cremer point at $0 \in \C$.
Then there is a small disc $B (\varepsilon)$ about $0 \in \C$ where, for every $j \in \Z^{\ast}$,
$F^j$ has no fixed point unless $F^j$ coincides with the
identity on all of its domain of definition. In particular,
if $F^j$, $j \in \Z$, coincides with the identity on some connected component of its domain of definition,
then it coincides with the identity on all of its domain of definition.\qed
\end{lema}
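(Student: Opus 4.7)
The plan is to proceed by case analysis on the multiplier $\lambda = F'(0)$. Since, by hypothesis, $F$ has no Cremer point at $0 \in \C$, three mutually exclusive situations arise: (i) $|\lambda| \neq 1$ (hyperbolic); (ii) $|\lambda| = 1$ with $\lambda$ not a root of unity and $F$ linearizable (Siegel); or (iii) $\lambda$ is a root of unity of some order $q$. The disc $B(\vare)$ will be chosen depending on which case applies.

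In cases (i) and (ii), Poincar\'e-Koenigs linearization (for $|\lambda| \neq 1$) or the non-Cremer hypothesis (for $|\lambda| = 1$) provides an analytic conjugacy between $F$ and $z \mapsto \lambda z$ on some disc $B(\vare)$, so that $F^j$ is conjugate to $z \mapsto \lambda^j z$ on $B(\vare)$. Since $\lambda^j \neq 1$ for every $j \in \Z^{\ast}$, the iterate $F^j$ is never the identity on $B(\vare)$ and its only fixed point in $B(\vare)$ is the origin. In case (iii), if moreover $F^q = {\rm id}$ as a germ at $0$, then $F$ has finite order $q$ and is analytically linearizable by the classical Bochner averaging argument; the reasoning above then shows that $F^j$ has only $0$ as fixed point in $B(\vare)$ when $q \nmid j$, whereas $F^j = {\rm id}$ identically on $B(\vare)$ when $q \mid j$.

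The remaining sub-case, where $\lambda$ has order $q$ but $F^q \neq {\rm id}$, is the one where I would expect the genuine work. Here $F^q$ is a non-trivial tangent-to-identity local diffeomorphism, and the Leau-Fatou flower theorem (see e.g.~\cite{carleson}) describes its topological dynamics on a sufficiently small disc $B(\vare)$: every non-zero orbit of $F^q$ eventually leaves $B(\vare)$ in either forward or backward time through one of finitely many petals, so that $F^q$ admits no periodic orbit of any period in $B(\vare) \setminus \{0\}$. Given $j \in \Z^{\ast}$ and $z \in B(\vare) \setminus \{0\}$ with $F^j(z) = z$, the $F$-orbit of $z$ would be periodic of some period $k$ dividing $j$, whence $z$ would be a fixed point of $(F^q)^k$ distinct from the origin, contradicting the Leau-Fatou picture. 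Therefore $F^j$ has no fixed point other than $0$ in $B(\vare)$.

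Finally, for the ``in particular'' clause, if $F^j$ coincides with the identity on some connected component of its pseudogroup domain then $F^j$ admits infinitely many fixed points, and the previous analysis leaves us in one of the linearizable subcases with $F^j = {\rm id}$ as a germ at~$0$. The explicit linearization then propagates the relation $F^j = {\rm id}$ to the whole linearizing disc $B(\vare)$, hence to every connected component of the pseudogroup domain of $F^j$, all of which lie in $B(\vare)$. The main conceptual input in this plan is the appeal to the Leau-Fatou flower theorem in the parabolic sub-case; once this tool is in hand, the rest of the argument amounts to combining standard linearization theorems with elementary book-keeping on periodic orbits.
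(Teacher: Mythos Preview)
Your argument is correct and matches the paper's own approach, which simply invokes Poincar\'e linearization for the hyperbolic case together with the topological dynamics of tangent-to-identity germs (Leau--Fatou, cf.~\cite{carleson}) and declares the lemma immediate. One small inaccuracy worth flagging: in the parabolic sub-case, orbits of $F^q$ need not \emph{leave} $B(\vare)$ --- points lying in the overlap of an attracting and a repelling petal have both forward and backward $F^q$-orbits converging to $0$ and hence remain inside --- but the conclusion you draw still stands, since every nonzero point lies in some petal along which its orbit converges to $0$, and this equally forbids nontrivial periodicity.
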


In closing this section, let us point out that Lemma~\ref{nocremer} is no longer valid if $F$ is allowed to
have a Cremer point at $0 \in \C$. Also, as it was the case in \cite{MRR}, the conclusions of our Theorem~A
no longer holds if $f,g$ are allowed to have Cremer points. This is a consequence of a construction due to
Perez-Marco, cf. \cite{perezmarco} of local diffeomorphisms $F \in  \diff$ exhibiting a Cremer point
at $0 \in \C$ and satisfying the following conditions:
\begin{itemize}

\item There exists a sequence
of points $\{q_i \}$ accumulating on $0 \in \C$ along with a sequence of {\it periods} $\{ n_i\}$, $n_i \neq 0$,
going to infinity such that $F^{n_i} (q_i) =q_i$ for every $i \in \N$.

\item The dynamics of $F^{n_i}$ about
its fixed point $q_i$ may arbitrarily be fixed: in particular, it can be chosen so that $F^{n_i}$ coincides with the
identity on some (very small) neighborhood of $q_i$.
\end{itemize}

\section{Proof of Theorem~A}

Let $f,g \in \diff$ and $D \subset \C$ be as in Section~2. Fixed $\alpha \in \N$ and given $(h_1,h_2) \in
\diffalpha \times \diffalpha$, consider the pseudogroups $\Gamma_{h_1,h_2}$ and $\overline{\Gamma}_{h_1,h_2}$
introduced in Section~2. Given a point $p \in D$, consider all those elements $W (\tilde{f}, \tilde{g})$ in
$\Gamma_{h_1,h_2}$ verifying the following:
\begin{itemize}
  \item The point $p$ belongs to the domain of definition of $W (\tilde{f}, \tilde{g})$ (as element of $\Gamma_{h_1,h_2}$).
  \item We have $W (\tilde{f}, \tilde{g}) (p) =p$.
\end{itemize}
The germs at $p$ of all elements $W (\tilde{f}, \tilde{g})$ in $\Gamma_{h_1,h_2}$ satisfying the mentioned
conditions form a group named {\it the stabilizer of $p$ in $\Gamma_{h_1,h_2}$}. The stabilizer of $p$ is said to
be trivial if this group is reduced to the identity.

As mentioned, Theorem~A is a natural continuation of the results obtained in \cite{MRR}. In fact, to begin the
approach to Theorem~A, we remind the reader that the main result of \cite{MRR} can be summarized as follows.

\begin{teo}\cite{MRR}\label{teo_MRR}
Suppose that $f,g$ and $D$ as above are fixed. Then there exists a $G_{\delta}$-dense set
$\mathcal{V} \subset \diffalpha \times \diffalpha$ such that, whenever $(h_1,h_2) \in \mathcal{V}$, for every non-empty
reduced word $W (a,b)$ the corresponding element $W (\tilde{f}, \tilde{g})$ of $\Gamma_{h_1,h_2}$ does
not coincide with the identity on any connected component of its domain of definition.
\end{teo}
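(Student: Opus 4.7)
The plan is a Baire category argument. The collection of non-empty reduced words $W(a,b)$ with respect to the presentation $\{ a, b \, ; \, a^{\textsf{r}} = b^{\textsf{s}} = {\rm id} \}$ is countable, so for each such $W$ I define
\[
\mathcal{V}_W = \{ (h_1, h_2) \in \diffalpha \times \diffalpha : W(\tilde{f}, \tilde{g}) \text{ is not the identity on any connected component of its domain} \} .
\]
Since $\diffalpha$ is a Baire space in the analytic topology, so is $\diffalpha \times \diffalpha$, and it suffices to prove that each $\mathcal{V}_W$ is open and dense; then $\mathcal{V} = \bigcap_W \mathcal{V}_W$ is $G_{\delta}$-dense.

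Openness follows from the identity principle combined with continuity of the evaluation map $(h_1, h_2) \mapsto W(\tilde{f}, \tilde{g})$ in the analytic topology. More precisely, $W(\tilde{f}, \tilde{g}) \equiv {\rm id}$ on a component $U$ if and only if its germ at some point of $U$ is the identity germ. By Lemma~\ref{extensionoverclosure}, $W(\tilde{f}, \tilde{g})$ extends holomorphically to an open neighborhood of each closed component $U \subseteq {\rm Dom}_W(\overline{D})$, and the coefficients of the Taylor expansion of the composition are controlled uniformly by those of $h_1, h_2$ on small discs. Combined with a mild continuity statement for the connected components of ${\rm Dom}_W(\overline{D})$ under small analytic perturbations, this gives that the complement of $\mathcal{V}_W$ is closed.

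Density is the heart of the argument. Given $(h_1, h_2) \notin \mathcal{V}_W$ and $\varepsilon > 0$, I want to produce $(h_1', h_2')$ within analytic distance $\varepsilon$ and lying in $\mathcal{V}_W$. Expanding the conjugates one writes
\[
W(\tilde{f}, \tilde{g}) = h_{i_l}^{-1} \circ F_l \circ (h_{i_l} \circ h_{i_{l-1}}^{-1}) \circ F_{l-1} \circ \cdots \circ (h_{i_2} \circ h_{i_1}^{-1}) \circ F_1 \circ h_{i_1} ,
\]
where $F_j \in \{f^{r_j}, g^{r_j}\}$ matches the block $\vartheta_j^{r_j}$ and $i_j \in \{1,2\}$. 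Because $W$ is reduced, successive letters $\vartheta_j, \vartheta_{j+1}$ alternate between $a$ and $b$, hence $i_{j+1} \neq i_j$, and every ``conjugator'' $C_j := h_{i_{j+1}} \circ h_{i_j}^{-1}$ is a genuinely non-trivial combination $h_1 \circ h_2^{-1}$ or $h_2 \circ h_1^{-1}$. I would then perturb along a family $h_1^{(t)} = h_1 \circ \exp(tX)$, where $X$ is a holomorphic vector field vanishing to order $\alpha+1$ at $0 \in \C$ (so $h_1^{(t)} \in \diffalpha$), chosen entire or with large enough disc of convergence and with geometrically decaying Taylor coefficients, ensuring that $d_A(h_1, h_1^{(t)}) \to 0$ as $t \to 0$. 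The ``bump'' of $X$ would be concentrated near a single chosen point $q$ of the $\Gamma_{h_1, h_2}$-orbit of a test point $p_0 \in {\rm Dom}_W(D)$. A chain-rule computation expresses $\tfrac{d}{dt}|_{t=0} W(\tilde{f}^{(t)}, \tilde{g})(p_0)$ as a sum of derivative terms, one for each occurrence of $h_1$ or $h_1^{-1}$ in the composition, evaluated at the corresponding points in the orbit of $p_0$; the reduced-word condition ensures these evaluation points are generically distinct, so a suitable choice of $X$ (``hitting'' a single such point) makes the variation non-zero. Hence $(h_1^{(t)}, h_2) \in \mathcal{V}_W$ for small $t$, as required.

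The main obstacle is the density step, and within it the verification that the first-order variation is genuinely non-zero. The reduced-word hypothesis is essential here: without it, adjacent cancellations $h_1 \circ h_1^{-1} = {\rm id}$ would force automatic vanishing of certain variation contributions and the perturbation direction would be tangent to the locus $\{W(\tilde{f}, \tilde{g}) \equiv {\rm id}\}$. A secondary technical subtlety is that the analytic topology is strictly stronger than the $C^\infty$ or compact-open topologies, so $X$ cannot be chosen with compact support; one must use entire vector fields with geometrically decaying Taylor coefficients (for instance, suitably rescaled truncations of Gaussian-like entire functions) in order to achieve the localization property while keeping $d_A$-distance small. Once these steps are in place, Baire's theorem yields the $G_\delta$-dense set $\mathcal{V}$.
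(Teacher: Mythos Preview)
The paper does not prove this theorem: it is quoted verbatim from the reference \cite{MRR} (Mattei--Rebelo--Reis), so there is no ``paper's own proof'' to compare against. What the present paper does is \emph{use} Theorem~\ref{teo_MRR} as a black box (for instance in the density half of Proposition~\ref{herewego1}).

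That said, your sketch has a genuine gap in the density step. You assert that ``the reduced-word condition ensures these evaluation points are generically distinct,'' but this is precisely the non-trivial content and you give no argument for it. Nothing in the reduced-word hypothesis alone prevents the itinerary $q_0, q_1, \ldots, q_{l-1}$ of a test point from having coincidences, and if they do coincide your localized bump $X$ will hit several terms of the chain-rule sum simultaneously, with possible cancellations. In the present paper the analogous issue is handled in Lemma~\ref{directuse} by an \emph{induction on the length of $W$}: one first perturbs so that shorter sub-words already separate the relevant orbit points, and only then applies the polynomial-bump perturbation at the last step. The base case of that induction ($l=1$) uses Lemma~\ref{nocremer}, i.e.\ the hypothesis that neither $f$ nor $g$ has a Cremer point at~$0$. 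Your sketch never invokes this hypothesis in the density argument, yet the remark at the end of Section~2 (Perez-Marco's construction) shows the theorem is false without it. A secondary issue is your ``entire $X$ concentrated near a single point'': you cannot localize an entire function in the naive sense, and the actual device (visible in Lemma~\ref{directuse} and throughout Section~5) is to take $X = z^{\alpha+1} P(z)$ with $P$ a polynomial vanishing at all orbit points except one---which again requires those points to be distinct.
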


In the rest of this section we shall prove Theorem~A modulo a more technical statement, namely
Proposition~\ref{disjointfixedpoints}, whose proof
will be deferred to Sections~4 and~5.

Let us begin with Proposition~\ref{herewego1} below which plays a significant role in everything that follows.
This proposition, as well as the argument used in its proof, is already important for the discussion conducted
in this section although its full strength will only be needed in the next section.

As always, let $W (a,b)$ be a fixed non-empty reduced word. Given $(h_1,h_2) \in \diffalpha \times \diffalpha$,
consider the pseudogroup $\Gamma_{h_1,h_2}$ (resp. $\overline{\Gamma}_{h_1,h_2}$) generated
by $\tilde{f} = h_1^{-1} \circ f \circ h_1$ and by $\tilde{g} = h_2^{-1} \circ g \circ h_2$ on $D$
(resp. $\overline{D}$). Finally
recall that both $f, g$ are supposed not to have Cremer points at $0 \in \C$.
The reader is also reminded that, according to
our conventions, the domain of definition ${\rm Dom}_W (\overline{D})$ of an element $W (\tilde{f}, \tilde{g})
\in \overline{\Gamma}_{h_1,h_2}$ is the closure of its domain of definition ${\rm Dom}_W (D)$ when viewed as
element of $\Gamma_{h_1,h_2}$.

\begin{prop}
\label{herewego1}
There is an open and dense set $\mathfrak{U}_W \subset \diffalpha \times \diffalpha$ such that, whenever $(h_1, h_2)
\in \mathfrak{U}_W$, the element $W (\tilde{f}, \tilde{g})$ of $\overline{\Gamma}_{h_1,h_2}$ has only finitely many
fixed points in its (closed) domain of definition ${\rm Dom}_W (\overline{D})$.
\end{prop}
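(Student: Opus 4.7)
The plan is to prove openness and density of $\mathfrak{U}_W$ separately. Density will follow from Theorem~\ref{teo_MRR} together with Lemma~\ref{extensionoverclosure}, while openness will come from a stability argument for zeros of holomorphic functions based on the Argument Principle (Rouch\'e's theorem), combined with a continuity statement for the recursive construction of domains in Definition~\ref{severaldomains}.

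For density I would show the stronger assertion that the $G_\delta$-dense set $\mathcal{V}$ furnished by Theorem~\ref{teo_MRR} is already contained in $\mathfrak{U}_W$. Pick $(h_1,h_2)\in\mathcal{V}$, set $F=W(\tilde f,\tilde g)$, and let $Z\subset{\rm Dom}_W(\overline D)$ denote the fixed-point set. Since ${\rm Dom}_W(\overline D)$ is compact (it is the closure of ${\rm Dom}_W(D)$ inside the compact disc $\overline D$), so is $Z$; hence it suffices to prove every $z\in Z$ is isolated in $Z$. If $z$ lies in the open set ${\rm Dom}_W(D)$, then an open neighborhood of $z$ is contained in the domain, and the holomorphic function $F-{\rm id}$ vanishes at $z$ but not identically on the component of ${\rm Dom}_W(D)$ through $z$, by Theorem~\ref{teo_MRR}; hence $z$ is an isolated zero. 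If instead $z$ lies on $\partial{\rm Dom}_W(D)$, let $U$ be the connected component of ${\rm Dom}_W(\overline D)$ containing $z$; by Lemma~\ref{extensionoverclosure}, $F|_U$ extends to some connected open neighborhood $V_U$ as a holomorphic function $\widehat F$. Analytic continuation from the non-empty open piece ${\rm Dom}_W(D)\cap U$, on which $\widehat F-{\rm id}$ does not vanish identically by Theorem~\ref{teo_MRR}, forces isolated zeros of $\widehat F-{\rm id}$ on $V_U$, whence $z$ is an isolated point of $Z\cap V_U$. Combined with the fact that distinct components of ${\rm Dom}_W(\overline D)$ do not mutually accumulate in the pseudogroup setting (a fact I would need to extract from the recursive structure of the preimage construction, possibly by a small extra perturbation to exclude pathological isolated-point components), $Z$ becomes compact and discrete, hence finite.

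For openness, fix $(h_1^0,h_2^0)\in\mathfrak{U}_W$, set $F_0:=W(\tilde f_0,\tilde g_0)$, and list its finitely many fixed points $p_1,\ldots,p_n$ in $K^0:={\rm Dom}_W^0(\overline D)$. I would pick pairwise disjoint open discs $B_i\ni p_i$ inside the local holomorphic extension of $F_0$ around the relevant component, and so small that $\overline{B_i}$ contains no other fixed point of $F_0$. Compactness of $K^0\setminus\bigcup_iB_i$ then yields $\varepsilon>0$ with $|F_0(z)-z|\geq\varepsilon$ there. The key technical step is to verify that, in the analytic topology, the recursive domain construction of Definition~\ref{severaldomains} depends continuously on $(h_1,h_2)$: for $(h_1,h_2)$ sufficiently close to $(h_1^0,h_2^0)$, the perturbed closed domain ${\rm Dom}_W(\overline D)$ lies in an arbitrarily small Hausdorff neighborhood of $K^0$, and $W(\tilde f,\tilde g)$ is uniformly close to $F_0$ on compact subsets of the common interior. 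Granted this, $|W(\tilde f,\tilde g)-{\rm id}|>\varepsilon/2$ outside $\bigcup_iB_i$ rules out fixed points there, while on each $\partial B_i$ the Argument Principle yields the same number of zeros of $W(\tilde f,\tilde g)-{\rm id}$ inside $B_i$ as of $F_0-{\rm id}$, a finite integer equal to the multiplicity of $p_i$. Therefore the perturbed map still has only finitely many fixed points in its domain, and $(h_1,h_2)\in\mathfrak{U}_W$.

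The main obstacle I foresee is the continuity of the domain of definition as a function of $(h_1,h_2)$ in the analytic topology. Propagating a small perturbation through the inductive preimage construction of Definition~\ref{severaldomains}, while correctly handling the open-versus-closed-domain convention and ruling out the creation of spurious boundary points, requires a careful iterated application of the uniform convergence of holomorphic functions on compact subsets of the successive intermediate domains. Once this continuity statement is in hand, Rouch\'e's theorem disposes of the openness argument essentially mechanically, and Theorem~\ref{teo_MRR} together with Lemma~\ref{extensionoverclosure} completes the density part.
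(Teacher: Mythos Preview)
Your approach is correct and matches the paper's: density via the inclusion $\mathcal{V}\subset\mathfrak{U}_W$ using Theorem~\ref{teo_MRR} and Lemma~\ref{extensionoverclosure}, and openness via the Argument Principle on small discs around the finitely many fixed points together with a uniform lower bound on $|W(\tilde f,\tilde g)(z)-z|$ off those discs.

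Two places where you make life harder than necessary. For density, the worry about components of ${\rm Dom}_W(\overline D)$ mutually accumulating is not a real obstacle, and no extra perturbation is needed. The paper simply argues by contradiction: if there were infinitely many fixed points in the compact set ${\rm Dom}_W(\overline D)$, some $P\in{\rm Dom}_W(\overline D)$ would be a non-trivial limit of fixed points $p_i$; Lemma~\ref{extensionoverclosure} gives a holomorphic extension of $W(\tilde f,\tilde g)$ to an open $V\ni P$, and the nearby $p_i$ are fixed points of this extension regardless of which component they lie in, since the extension is nothing but the same composed formula $H_s\circ\cdots\circ H_1$ evaluated on a slightly enlarged domain. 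The identity theorem on $V$ then forces $W(\tilde f,\tilde g)\equiv{\rm id}$ on the piece of ${\rm Dom}_W(D)$ meeting $V$, contradicting $(h_1,h_2)\in\mathcal V$. There is no need to split into interior and boundary cases or to control how components sit relative to one another.

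For openness, the paper sidesteps your ``main obstacle'' of Hausdorff continuity of the domain by first fixing once and for all a slightly enlarged open set $U^\epsilon\supset{\rm Dom}_W(\overline D)$ on which $W(\tilde f,\tilde g)$ still extends holomorphically and has no fixed points other than $p_1,\ldots,p_n$. Then one only needs that the perturbed closed domain is \emph{contained} in $U^\epsilon$, that the perturbed map extends to a neighborhood of $\overline{U}^\epsilon$, and that it is uniformly close to $W(\tilde f,\tilde g)$ there; all three follow directly from convergence in the analytic topology. This containment statement is much weaker than Hausdorff convergence and avoids the inductive domain-tracking you anticipate.
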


\begin{proof}
Naturally the set $\mathfrak{U}_W$ is defined as consisting of those pairs $(h_1,h_2) \in \diffalpha \times \diffalpha$
for which the statement of our proposition holds. Let us begin by showing that $\mathfrak{U}_W$ is dense. For this, it suffices
to check that every pair $(h_1,h_2)$ lying in the $G_{\delta}$-dense set $\mathcal{V}$ provided by Theorem~\ref{teo_MRR}
gives rise to a pseudogroup $\overline{\Gamma}_{h_1,h_2}$ whose elements, apart from the identity,
have only finitely many fixed points in their (closed) domains of definition.

Consider then $W (\tilde{f}, \tilde{g})$ where $h_1, \, h_2$ are as above and denote by ${\rm Dom}_W (\overline{D})$
its (closed) domain of definition. By construction, ${\rm Dom}_W (\overline{D})$ is the closure of the domain
of definition ${\rm Dom}_W (D)$ of $W (\tilde{f}, \tilde{g})$ viewed as element of $\Gamma_{h_1,h_2}$. Suppose for a
contradiction that $W (\tilde{f}, \tilde{g})$ possesses infinitely many fixed points in ${\rm Dom}_W (\overline{D})$.
Since ${\rm Dom}_W (\overline{D})$ is closed, there is $P \in {\rm Dom}_W (\overline{D})$ that is the limit of
a non-trivial sequence $\{ p_i\} \subset {\rm Dom}_W (\overline{D})$ consisting of fixed points of $W (\tilde{f}, \tilde{g})$.
However, by virtue of Lemma~\ref{extensionoverclosure}, $W (\tilde{f}, \tilde{g})$
admits a holomorphic extension to some open neighborhood $V \subset \C$ of $P$. Besides $W (\tilde{f}, \tilde{g})$
must coincide with the identity on $V$ since $P$ is accumulated by a non-trivial sequence $\{ p_i \}$ of
fixed points of $W (\tilde{f}, \tilde{g})$. On the other hand, the open set $V$ must intersect the open set
${\rm Dom}_W (D)$ non-trivially since $P$ lies in the closure of ${\rm Dom}_W (D)$. Therefore
$W (\tilde{f}, \tilde{g})$ coincides with the identity on some connected component of ${\rm Dom}_W (D)$
what is impossible since $(h_1,h_2) \in \mathcal{V}$. The resulting contradiction ensures that
$\mathfrak{U}_W$ is dense in $\diffalpha \times \diffalpha$.

It remains to show that $\mathfrak{U}_W$ is also open in the analytic topology. For this consider an element
$(h_1, h_2) \in \mathfrak{U}_W$ and the corresponding element $W (\tilde{f}, \tilde{g})$
of $\overline{\Gamma}_{h_1,h_2}$. We need to check that every pair $(\overline{h}_1, \overline{h}_2) \in
\diffalpha \times \diffalpha$ sufficiently close to $(h_1,h_2)$ must belong to $\mathfrak{U}_W$ as well.

Since $(h_1,h_2)$ lies in $\mathfrak{U}_W$, $W (\tilde{f}, \tilde{g})$ has
only finitely many fixed points in its (closed) domain of definition ${\rm Dom}_W (\overline{D})$. These points
will be denoted by $p_1, \ldots ,p_n$. Now we consider a small open neighborhood $U^{\epsilon}$ of
${\rm Dom}_W (\overline{D})$ such that $W (\tilde{f}, \tilde{g})$ still possesses a holomorphic extension
to a neighborhood of the closure $\overline{U}^{\epsilon}$ of $U^{\epsilon}$, cf. Lemma~\ref{extensionoverclosure}.
Modulo choosing $U^{\epsilon}$ very small,
the points $p_1, \ldots ,p_n$ are isolated points also for the set of fixed points of $W (\tilde{f}, \tilde{g})$
on the closed set $\overline{U}^{\epsilon}$. In fact, we can suppose that the latter set of fixed points coincide
with $p_1, \ldots ,p_n$, i.e. $W (\tilde{f}, \tilde{g})$ has no additional fixed point in $\overline{U}^{\epsilon}$.
In particular none of these fixed points lies in the boundary of $\overline{U}^{\epsilon}$.

Next, for each $i=1, \ldots ,n$, let $B_i (\delta)$ denote the disc of radius $\delta >0$ about $p_i$. If $\delta$
is chosen sufficiently small, then $B_i (\delta) \subset U^{\epsilon}$ and $B_i (\delta) \cap
B_j (\delta) =\emptyset$ provided that $i \neq j$. Let now $K$ be the compact set given by
$K = \overline{U}^{\epsilon} \setminus \bigcup_{i=1}^n B_i (\delta)$. Since $K$ is compact and
$W (\tilde{f}, \tilde{g})$ has no fixed point in $K$, it follows the existence of $\tau >0$ such that
$\Vert W (\tilde{f}, \tilde{g}) (z) -z \Vert \geq \tau >0$ for every~$z \in K$.

Finally consider a sequence $\overline{h}_{1,j}$ (resp. $\overline{h}_{2,j}$) of elements in $\diffalpha$
converging to $h_1$ (resp. $h_2$) in the analytic topology. Set $\overline{f}_j = \overline{h}_{1,j}^{-1}
\circ f \circ \overline{h}_{1,j}$ and $\overline{g}_j = \overline{h}_{2,j}^{-1}\circ g \circ \overline{h}_{2,j}$
and consider the corresponding element $W (\overline{f}_j, \overline{g}_j)$ of the pseudogroup generated by
$\overline{f}_j, \, \overline{g}_j$ on $D$. The proposition is now reduced to the following claim:

\vspace{0.1cm}

\noindent {\it Claim}. For sufficiently large $j$, $W (\overline{f}_j, \overline{g}_j)$ has only finitely
many fixed points in its domain of definition.

\vspace{0.1cm}

\noindent {\it Proof of the Claim}. Since convergence in the analytic topology ensures both convergence
of domains of definition as well as uniform convergence in these domains, modulo choosing $j$ very large,
the local diffeomorphism $W (\overline{f}_j, \overline{g}_j)$ satisfies the conditions below.
\begin{enumerate}
  \item The domain of definition of $W (\overline{f}_j, \overline{g}_j)$ as element of the pseudogroup generated by
$\overline{f}_j, \, \overline{g}_j$ on $D$ is contained in $U^{\epsilon} \subset \overline{U}^{\epsilon}$.
Besides $W (\overline{f}_j, \overline{g}_j)$
possesses a holomorphic extension to a neighborhood of $\overline{U}^{\epsilon}$.

  \item For every $z \in K \subset \overline{U}^{\epsilon}$, we have
$\Vert W (\overline{f}, \overline{g}) (z) -z \Vert \geq \tau/2 >0$. In particular, $W (\overline{f}_j, \overline{g}_j)$
has no fixed point in $K$.
\end{enumerate}
To conclude the proof of the claim, it suffices to check that $W (\overline{f}_j, \overline{g}_j)$ can have only
finitely many fixed points in $B_i (\delta)$, for every $i=1, \ldots ,n$. This is however an immediate
consequence of the Argument principle. Indeed, the uniform convergence of $W (\overline{f}_j, \overline{g}_j)$
to $W (\tilde{f}, \tilde{g})$ on (a neighborhood of $B_i (\delta)$) implies, by Cauchy formula and the Argument
principle, that the sum of the zeros of $W (\overline{f}_j, \overline{g}_j) (z) -z$ on $B_i (\delta)$ counted with
multiplicity  equals the multiplicity of $p_i$ as zero of $W (\tilde{f}, \tilde{g}) (z) -z$ since the values of the
corresponding integrals must agree by uniform convergence of the integrand. The claim is proved.\qed

The proof of Proposition~\ref{herewego1} is over.
\end{proof}

For posterior reference, it is useful to explicitly state some by-products of the proof of
Proposition~\ref{herewego1}. First we have:

\begin{coro}
\label{corobyproduct}
Consider a non-empty reduced word $W (a,b)$. Suppose that $(h_1,h_2) \in \diffalpha \times \diffalpha$
is such that the element $W (h_1^{-1} \circ f \circ h_1, h_2^{-1} \circ g \circ h_2) = W (\tilde{f}, \tilde{g})$
has only finitely many fixed points in its (open) domain of definition ${\rm Dom}_W (D)$ when viewed as element of
$\Gamma_{h_1,h_2}$. Then $W (\tilde{f}, \tilde{g})$ also has finitely many fixed points in its (closed) domain
of definition ${\rm Dom}_W (\overline{D})$ when viewed as element of $\overline{\Gamma}_{h_1,h_2}$.
\end{coro}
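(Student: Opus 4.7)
The plan is to argue by contradiction, essentially repeating the compactness/identity-principle argument already used in the density portion of the proof of Proposition~\ref{herewego1}. Assume that $W(\tilde{f},\tilde{g})$ possesses infinitely many fixed points in ${\rm Dom}_W(\overline{D})$. Since ${\rm Dom}_W(\overline{D})$ is a closed subset of the compact set $\overline{D}$, it is itself compact, and hence these fixed points accumulate at some point $P \in {\rm Dom}_W(\overline{D})$.

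Next, I would invoke Lemma~\ref{extensionoverclosure} applied to the connected component $U$ of ${\rm Dom}_W(\overline{D})$ that contains $P$. This gives an open neighborhood $V$ of $U$ (in particular of $P$) together with a holomorphic extension of $W(\tilde{f},\tilde{g})$ to $V$. Since $P \in V$ is accumulated by zeros of the holomorphic function $W(\tilde{f},\tilde{g})(z) - z$ on $V$, the identity principle forces this extension to coincide with the identity throughout (the connected component of $V$ containing)~$P$.

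To close the argument, I would use the standing convention, recalled in Section~2, that ${\rm Dom}_W(\overline{D})$ coincides with the topological closure of ${\rm Dom}_W(D)$. This ensures that $P$ lies in the closure of the open set ${\rm Dom}_W(D)$, so that $V \cap {\rm Dom}_W(D)$ is a non-empty open subset of ${\rm Dom}_W(D)$ on which $W(\tilde{f},\tilde{g})$, now viewed as an element of $\Gamma_{h_1,h_2}$, coincides with the identity. This contradicts the hypothesis that $W(\tilde{f},\tilde{g})$ has only finitely many fixed points in ${\rm Dom}_W(D)$, and the corollary follows.

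There is no serious obstacle here: the whole point of the statement is that, thanks to the holomorphic extension property of Lemma~\ref{extensionoverclosure} and the convention ${\rm Dom}_W(\overline{D}) = \overline{{\rm Dom}_W(D)}$, any accumulation of fixed points on the boundary would propagate back into the interior. The only mildly delicate point is making sure that the neighborhood $V$ provided by Lemma~\ref{extensionoverclosure} genuinely meets the open domain ${\rm Dom}_W(D)$, and this is precisely what the closure convention guarantees.
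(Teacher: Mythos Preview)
Your proof is correct and follows essentially the same approach as the paper: the paper's proof simply observes that finitely many fixed points in ${\rm Dom}_W(D)$ prevents $W(\tilde f,\tilde g)$ from coinciding with the identity on any connected component of ${\rm Dom}_W(D)$, and then refers back verbatim to the contradiction argument in the second paragraph of the proof of Proposition~\ref{herewego1}, which is exactly the compactness/identity-principle reasoning you spelled out.
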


\begin{proof}
Since $W (\tilde{f}, \tilde{g})$ has only finitely many fixed points in ${\rm Dom}_W (D)$, it follows that
$W (\tilde{f}, \tilde{g})$ does not coincide with the identity on any connected component of the open set
${\rm Dom}_W (D)$. Now the second paragraph in the proof of Proposition~\ref{herewego1} can be repeated word-by-word
to establish the desired statement.
\end{proof}

In the above discussion, it is understood that the fixed points in question can be counted with their multiplicities.
On the other hand, if $(h_1,h_2)$ leads to an element $W (\tilde{f}, \tilde{g})$ having only finitely many fixed points
in its (closed) domain of definition ${\rm Dom}_W (\overline{D})$, then the proof of
Proposition~\ref{herewego1} also establishes that every pair $(\overline{h}_1, \overline{h}_2) \in
\diffalpha \times \diffalpha$ sufficiently close to $(h_1,h_2)$ gives rise to a new pseudogroup
$\Gamma_{\overline{h}_1, \overline{h}_2}$ on $D$ whose corresponding element $W (\overline{f}, \overline{g})$
has only finitely many fixed points in its domain of definition. For reference, we state:

\begin{lema}
\label{forlaterreference1}
Suppose that a reduced word $W (a,b)$ is fixed. Let $\mathfrak{V}_W \subset \diffalpha \times \diffalpha$
denote the set of pairs $(h_1,h_2)$ giving rise to a pseudogroup $\Gamma_{h_1,h_2}$ for which the element
$W (\tilde{f}, \tilde{g})$ possesses only finitely many fixed points in its closed
domain of definition. Then $\mathfrak{V}_W$ is open.

Furthermore, if $(h_1,h_2) \in \mathfrak{V}$ is ``perturbed'' to $(\overline{h}_1, \overline{h}_2)$
(necessarily in $\mathfrak{V}$), then the fixed points of $W (\overline{h}_1, \overline{h}_2)$ are close to the
fixed points of $W (\tilde{f}, \tilde{g})$.\qed
\end{lema}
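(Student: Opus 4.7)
The plan is to observe that this lemma is essentially a by-product of the proof of Proposition~\ref{herewego1}. Indeed, the defining condition for $\mathfrak{V}_W$ matches the condition used implicitly to define $\mathfrak{U}_W$ in Proposition~\ref{herewego1}, so the two sets coincide and openness has in principle already been established. I would however re-examine the openness argument in slightly more detailed form, because the second clause of the lemma—continuity of fixed points under perturbation—can be extracted from the same construction.

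Concretely, I would start from a pair $(h_1,h_2) \in \mathfrak{V}_W$ and list the finitely many fixed points $p_1, \ldots, p_n$ of $W(\tilde{f}, \tilde{g})$ in ${\rm Dom}_W (\overline{D})$. Using Lemma~\ref{extensionoverclosure}, I would pick a small open neighborhood $U^{\epsilon}$ of ${\rm Dom}_W (\overline{D})$ to which $W(\tilde{f}, \tilde{g})$ extends holomorphically, and arrange $U^{\epsilon}$ so that $\{p_1, \ldots, p_n\}$ is still the full fixed-point set of this extension on $\overline{U}^{\epsilon}$. I would then pick pairwise disjoint discs $B_i (\delta) \subset U^{\epsilon}$ about each $p_i$ and set $K = \overline{U}^{\epsilon} \setminus \bigcup_i B_i (\delta)$; by compactness and the absence of fixed points on $K$, one obtains a lower bound $\|W(\tilde{f}, \tilde{g})(z) - z\| \geq \tau > 0$ for some $\tau > 0$.

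Next I would invoke convergence in the analytic topology. For every $(\overline{h}_1, \overline{h}_2)$ sufficiently close to $(h_1, h_2)$, the perturbed map $W(\overline{f}, \overline{g})$ (as element of the pseudogroup generated by $\overline{f} = \overline{h}_1^{-1} \circ f \circ \overline{h}_1$ and $\overline{g} = \overline{h}_2^{-1} \circ g \circ \overline{h}_2$) has its closed domain of definition contained in $\overline{U}^{\epsilon}$, extends holomorphically to a neighborhood of $\overline{U}^{\epsilon}$, and satisfies $\|W(\overline{f}, \overline{g})(z) - z\| \geq \tau/2$ on $K$. Hence every fixed point of $W(\overline{f}, \overline{g})$ in its closed domain lies in $\bigcup_i B_i (\delta)$. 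The Argument principle applied to $z \mapsto W(\overline{f}, \overline{g})(z) - z$ on each $B_i(\delta)$, compared via uniform convergence to the analogous integral for $W(\tilde{f}, \tilde{g})(z) - z$, then bounds the number of zeros of the perturbed map in $B_i(\delta)$ (counted with multiplicity) by the multiplicity of $p_i$ as zero of $W(\tilde{f}, \tilde{g})(z) - z$. Summing over $i$ yields finiteness, giving $(\overline{h}_1, \overline{h}_2) \in \mathfrak{V}_W$ and hence openness.

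The continuity assertion then follows by noting that $\delta$ can be taken arbitrarily small in the construction above, at the cost of tightening the admissible perturbation correspondingly; the conclusion is that every fixed point of $W(\overline{f}, \overline{g})$ lies within distance $\delta$ of some $p_i$. The only point that requires care—rather than being a genuine obstacle—is the tracking of the perturbed domains of definition, which may vary with $(\overline{h}_1, \overline{h}_2)$; the fact that they remain inside $\overline{U}^{\epsilon}$ is exactly the convergence of domains of definition built into the analytic topology and already used in the proof of Proposition~\ref{herewego1}.
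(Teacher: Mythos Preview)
Your proposal is correct and follows essentially the same approach as the paper: the paper gives no separate proof of this lemma (it is stated with a \qed), instead remarking just before it that the openness part of the proof of Proposition~\ref{herewego1} establishes the result verbatim, and your expansion faithfully reproduces that argument together with the observation that shrinking $\delta$ yields the continuity clause. The only cosmetic point is that the Argument principle actually gives equality (not just an upper bound) between the multiplicity of $p_i$ and the total number of nearby zeros of the perturbed map, but this does not affect your conclusion.
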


Note that, when it comes to the assumption of Lemma~\ref{forlaterreference1}, Corollary~\ref{corobyproduct} ensures
that the mentioned assumption is, indeed, equivalent to saying that $W (\tilde{f}, \tilde{g})$ has finitely many
fixed points in its open domain of definition ${\rm Dom}_W (D)$.

Two reduced words $W_1 (a,b), \, W_2 (a,b)$ are said to be
{\it commensurable}\, if there is a word $W_{1-2} (a,b)$ such that both $W_1 (a,b), \, W_2 (a,b)$
are obtained by concatenating finitely many
``copies'' of $W_{1-2} (a,b)$. This means that the local diffeomorphism $W_1 (f,g)$ (resp. $W_2 (f,g)$)
is a finite power of the local diffeomorphism $W_{1-2} (f,g)$ for every pair $f,g \in \diff$. If there is
not such word $W_{1-2} (a,b)$, then $W_1 (a,b), \, W_2 (a,b)$ are said to be {\it incommensurable}. Let
$\mathcal{I}$ denote the collection of pairs $(W_1 (a,b), W_2 (a,b))$ of incommensurable words.
Clearly $\mathcal{I}$ is a countable set.

Given a reduced word $W (a,b)$ and a pair $(h_1,h_2) \in \mathfrak{U}_W
\subset \diffalpha \times \diffalpha$, denote by
${\rm Fix}\, (W (h_{1}^{-1} \circ f \circ h_{1}, \, h_{2}^{-1} \circ g \circ h_{2}))$
the set of fixed points of $W (h_{1}^{-1} \circ f \circ h_{1}, \, h_{2}^{-1} \circ g \circ h_{2})
= W (\tilde{f}, \tilde{g})$ in its (closed) domain of definition ${\rm Dom}_W (\overline{D})$
(i.e. $W (\tilde{f}, \tilde{g})$ is viewed as element of $\overline{\Gamma}_{h_1,h_2}$).
Let us now choose two incommensurable words $W_i (a,b)$ and $W_j (a,b)$ so that the pair
$(W_i (a,b), W_j (a,b))$ defines an element of $\mathcal{I}$. Consider the set $\mathcal{U}_{i,j}
\subset \diffalpha \times \diffalpha$ consisting of those pairs $(h_1,h_2)$ of local diffeomorphisms
fulfilling the following two conditions:
\begin{itemize}
\item Both sets ${\rm Fix}\, (W_i (\tilde{f}, \tilde{g}))$
and ${\rm Fix}\, (W_j (\tilde{f}, \tilde{g}))$ are finite.

\item ${\rm Fix}\, (W_i (\tilde{f}, \tilde{g})) \cap
{\rm Fix}\, (W_j (\tilde{f}, \tilde{g}) ) = \{0\} \subset \C$.

\item The multiplicity of $0 \in \C$ as fixed point of $W_i (\tilde{f}, \tilde{g})$ and
of $W_j (\tilde{f}, \tilde{g})$ does not change by perturbation.

\end{itemize}
Note that the first item above means that $(h_1,h_2)$ lies in the intersection $\mathfrak{V}_i \cap
\mathfrak{V}_j$ which is open owing to Lemma~\ref{forlaterreference1}. Next, in view of the third item, no additional
common fixed point can ``bifurcate'' from $0 \in \C$ by perturbing $(h_1,h_2)$ due to the usual Argument principle.
Note also that the condition in the third item is always verified when $0 \in \C$ has multiplicity~$1$ as fixed point
of $W_i (\tilde{f}, \tilde{g}), \, W_j (\tilde{f}, \tilde{g})$. Since, apart from the $0 \in \C$,
$W_i (\tilde{f}, \tilde{g}), \, W_j (\tilde{f}, \tilde{g})$ share no fixed point, it follows from the second
part of the statement of Lemma~\ref{forlaterreference1} that
${\rm Fix}\, (W_i (\overline{h}_{1}^{-1} \circ f \circ \overline{h}_{1},
\overline{h}_{2}^{-1} \circ g \circ \overline{h}_{2})) \cap {\rm Fix}\, (W_j (\overline{h}_{1}^{-1} \circ f \circ \overline{h}_{1},
\overline{h}_{2}^{-1} \circ g \circ \overline{h}_{2})) = \{0\} \subset \C$ for every pair
$(\overline{h}_1, \overline{h}_2)$ sufficiently close to $(h_1,h_2)$.

Summarizing what precedes, we have proved the following.
\begin{lema}
\label{openessend1}
The set\, $\mathcal{U}_{i,j} \subset \diffalpha \times \diffalpha$ is open for every pair of incommensurable
words $W_i (a,b), \, W_j (a,b)$.\qed
\end{lema}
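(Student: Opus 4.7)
The plan is to verify that each of the three clauses defining membership in $\mathcal{U}_{i,j}$ is preserved under sufficiently small perturbations of $(h_1,h_2)$ in the analytic topology. For the first clause, Lemma~\ref{forlaterreference1} already provides the openness of $\mathfrak{V}_i$ and $\mathfrak{V}_j$ separately, so their intersection is open as well. In particular, if $(h_1,h_2)\in\mathcal{U}_{i,j}$ then for every pair $(\overline{h}_1,\overline{h}_2)$ close enough to $(h_1,h_2)$ in $\diffalpha \times \diffalpha$, both $W_i(\overline{h}_1^{-1}\circ f\circ\overline{h}_1,\overline{h}_2^{-1}\circ g\circ\overline{h}_2)$ and $W_j(\overline{h}_1^{-1}\circ f\circ\overline{h}_1,\overline{h}_2^{-1}\circ g\circ\overline{h}_2)$ still have only finitely many fixed points in their closed domains of definition.

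Next I would handle the disjointness clause. By the second part of Lemma~\ref{forlaterreference1}, when $(h_1,h_2)$ is perturbed into $(\overline{h}_1,\overline{h}_2)$, the fixed points of the perturbed word stay arbitrarily close to those of the unperturbed one. Since the sets ${\rm Fix}\,(W_i(\tilde{f},\tilde{g}))$ and ${\rm Fix}\,(W_j(\tilde{f},\tilde{g}))$ are both finite and meet only at $0$, one can enclose each non-zero fixed point of $W_i(\tilde{f},\tilde{g})$ (respectively, $W_j(\tilde{f},\tilde{g}))$ in a small disc disjoint from every fixed point of $W_j(\tilde{f},\tilde{g})$ (respectively, $W_i(\tilde{f},\tilde{g}))$; this disjointness propagates to sufficiently small perturbations by continuity. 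Hence, outside any fixed small disc $B(0,\delta)$ about the origin, no common fixed point of $W_i$ and $W_j$ can appear at $(\overline{h}_1,\overline{h}_2)$.

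The delicate step, which is where I expect the main work to lie, is ruling out extra common fixed points bifurcating from $0$ inside $B(0,\delta)$. This is precisely the role of clause~(3) in the definition of $\mathcal{U}_{i,j}$. Since the multiplicity of $0$ as a fixed point of $W_i(\tilde{f},\tilde{g})$ and of $W_j(\tilde{f},\tilde{g})$ does not change under perturbation, the Argument principle applied to the holomorphic functions $W_i(\overline{h}_1^{-1}\circ f\circ\overline{h}_1,\overline{h}_2^{-1}\circ g\circ\overline{h}_2)(z)-z$ and $W_j(\overline{h}_1^{-1}\circ f\circ\overline{h}_1,\overline{h}_2^{-1}\circ g\circ\overline{h}_2)(z)-z$ on $B(0,\delta)$ forces the total count of their zeros in $B(0,\delta)$ to stay concentrated at $0$. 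Therefore no additional fixed point of either $W_i$ or $W_j$ can detach from the origin under perturbation, and in particular no additional common fixed point can appear in $B(0,\delta)$.

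Combining the three previous observations, one concludes that for every $(\overline{h}_1,\overline{h}_2)$ in a sufficiently small analytic neighborhood of $(h_1,h_2)$ both clauses~(1) and~(2) remain valid. The preservation of clause~(3) is then automatic: if the multiplicity at $0$ is locally constant on a neighborhood $\mathcal{N}$ of $(h_1,h_2)$, then for any $(\overline{h}_1,\overline{h}_2)\in\mathcal{N}$ the same neighborhood (restricted around that point) witnesses the analogous stability. Putting these pieces together shows that $\mathcal{U}_{i,j}$ contains an open neighborhood of each of its elements, completing the proof.
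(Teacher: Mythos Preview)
Your proposal is correct and follows essentially the same approach as the paper: the paper's proof (given in the paragraph immediately preceding the lemma) likewise invokes Lemma~\ref{forlaterreference1} for the openness of the finiteness condition, uses the Argument principle together with the third clause to preclude bifurcation of common fixed points from $0$, and applies the second part of Lemma~\ref{forlaterreference1} to handle the non-zero fixed points. Your treatment is slightly more detailed, in particular you make explicit why clause~(3) itself persists under perturbation, a point the paper leaves implicit.
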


On the other hand, we shall also prove:

\begin{prop}
\label{disjointfixedpoints}
The set\, $\mathcal{U}_{i,j} \subset \diffalpha \times \diffalpha$ is dense for every pair of incommensurable
words $W_i (a,b), \, W_j (a,b)$.
\end{prop}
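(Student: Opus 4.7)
The plan is to show density by starting from an arbitrary pair $(h_1,h_2) \in \diffalpha \times \diffalpha$ and producing, by successive small perturbations in the analytic topology, a nearby pair lying in $\mathcal{U}_{i,j}$.

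First, I would invoke Proposition~\ref{herewego1} to replace $(h_1,h_2)$ by a nearby pair belonging to the open and dense set $\mathfrak{U}_{W_i} \cap \mathfrak{U}_{W_j}$, so that item~(1) of the definition of $\mathcal{U}_{i,j}$ is satisfied: both $W_i(\tilde{f},\tilde{g})$ and $W_j(\tilde{f},\tilde{g})$ have only finitely many fixed points in their closed domains of definition. A further small perturbation, which generically splits any higher multiplicity zero of $W_i(\tilde{f},\tilde{g})(z)-z$ or of $W_j(\tilde{f},\tilde{g})(z)-z$ into simple zeros, then arranges that every nonzero fixed point of $W_i(\tilde{f},\tilde{g})$ and of $W_j(\tilde{f},\tilde{g})$ is a simple zero of the corresponding difference. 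For item~(3), I would perturb once more so as to reduce the multiplicity of $0\in \C$ as zero of $W_i(\tilde{f},\tilde{g})(z)-z$ and of $W_j(\tilde{f},\tilde{g})(z)-z$ to its minimum attainable value over a neighborhood in the analytic topology; such a minimum is automatically locally constant and therefore stable under further perturbation.

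Once this preliminary open-and-dense reduction is in place, the proof becomes an induction on the cardinality $k$ of the common nonzero fixed-point set ${\rm Fix}\, (W_i(\tilde{f},\tilde{g})) \cap {\rm Fix}\, (W_j(\tilde{f},\tilde{g})) \setminus\{0\}$. If $k=0$ we are done; otherwise I would pick a common point $p \neq 0$ and construct an arbitrarily small perturbation $(\overline{h}_1,\overline{h}_2)$ such that the unique fixed points $p^{(i)}, p^{(j)}$ of the perturbed maps $W_i(\overline{\tilde{f}},\overline{\tilde{g}})$ and $W_j(\overline{\tilde{f}},\overline{\tilde{g}})$ lying near $p$ --- whose existence and continuity follow from the Argument principle thanks to simplicity --- satisfy $p^{(i)} \neq p^{(j)}$. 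Simultaneously one must keep the perturbation small enough that no new common fixed point appears elsewhere and that the remaining common points of $W_i, W_j$ contribute at most $k-1$ common fixed points after the perturbation. Iterating at most $\min(n_i,n_j)$ times then yields a pair with no common nonzero fixed point; combined with Lemma~\ref{openessend1}, this establishes the density of $\mathcal{U}_{i,j}$.

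The hard part, to which Sections~4 and~5 are devoted, is the construction of the ``decoupling'' perturbation in the elimination step. The essential difficulty is that perturbing $h_1$ (resp.\ $h_2$) alters $\tilde{f} = h_1^{-1}\circ f\circ h_1$ (resp.\ $\tilde{g}= h_2^{-1}\circ g\circ h_2$), and since both letters occur in $W_i$ and in $W_j$, naive perturbations move $p^{(i)}$ and $p^{(j)}$ in a coupled fashion that may keep them equal. The incommensurability of $W_i(a,b)$ and $W_j(a,b)$ must enter precisely here: the finite orbit of $p$ traced by the successive partial compositions of the letters of $W_i$ and the analogous orbit traced by $W_j$ cannot coincide as sets, so there must exist intermediate points appearing in the $W_i$-history of $p$ but not in the $W_j$-history of $p$ (or conversely). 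By perturbing $h_1$ or $h_2$ in a manner suitably ``supported'' near such a distinguished orbit point --- while preserving the $\alpha$-th order tangency to the identity at $0$ so as to remain in $\diffalpha$ --- one should be able to modify one of the two compositions in a neighborhood of $p$ without disturbing the other, thereby displacing $p^{(i)}$ independently of $p^{(j)}$. Making this localization rigorous within the analytic topology, and simultaneously controlling its effect at the other common fixed points, is the main technical content of the subsequent sections.
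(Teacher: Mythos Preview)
Your overall architecture matches the paper's proof: first pass to $\mathfrak{U}_{W_i}\cap\mathfrak{U}_{W_j}$, then stabilize multiplicities, then eliminate the common nonzero fixed points one at a time via a localized perturbation (the paper's Lemma~\ref{herewego3}), with the Argument principle controlling what happens elsewhere. Two points deserve correction.

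First, a minor one: you assert that a further perturbation makes every nonzero fixed point \emph{simple}. The paper does not claim this and it is not obvious, since the allowed perturbations are constrained to conjugations of $f,g$ by elements of $\diffalpha$. What the paper actually proves (Lemma~\ref{herewego2}) is the weaker statement that one can reduce each multiplicity to its local minimum over a neighborhood of $(h_1,h_2)$; this minimum is then stable under further perturbation, which is all that is needed for the induction. Your argument goes through with this weaker conclusion, so this is only a phrasing issue.

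Second, and more substantively: your sketch of the decoupling step asserts that incommensurability forces the $W_i$-itinerary and the $W_j$-itinerary of $p$ to differ \emph{as sets}, so that some intermediate orbit point of one word is absent from the other. This is false in general. When the two words have the same length and neither is conjugate to a shorter word, the two itineraries can coincide pointwise up to relabeling; this is exactly Case~3 in the paper's proof of Lemma~\ref{herewego3}. In that situation there is no ``distinguished orbit point'' in your sense, and the paper instead exploits the fact that the \emph{letter sequences} must eventually differ (subcases 3a--3c): either the first letters differ, or the first exponents differ, or one cyclically shifts until a discrepancy appears --- and it is here, not at the level of orbit sets, that incommensurability is finally used. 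Your heuristic captures Cases~1 and~2 but misses the mechanism needed for Case~3; the induction on the length of the shorter word, together with Lemma~\ref{herewego4} (which straightens out itineraries while preserving the fixed-point condition), is what makes that case go through.
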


As soon as Proposition~\ref{disjointfixedpoints} is established, we are able to prove Theorem~A. Since the proof of
Proposition~\ref{disjointfixedpoints} is long and technical, we shall first derive Theorem~A deferring to the next
section the proof of Proposition~\ref{disjointfixedpoints}.

\begin{proof}[Proof of Theorem~A]
Fixed a pair $(W_i (a,b), W_j (a,b))$ in $\mathcal{I}$, i.e. a pair of incommensurable words, consider the above
defined set $\mathcal{U}_{i,j}$. According to Lemma~\ref{openessend1} and to Proposition~\ref{disjointfixedpoints},
the set $\mathcal{U}_{i,j}$ is open and dense in $\diffalpha \times \diffalpha$. Let us then define
$$
\mathcal{U} =  \bigcap_{(W_{i} (a,b), W_{j} (a,b)) \in \mathcal{I}} \mathcal{U}_{i, j}  \, .
$$
Clearly $\mathcal{U}$ is a $G_{\delta}$-dense subset of $\diffalpha \times \diffalpha$. By construction of
$\mathcal{U}$, it also clear that the stabilizer of every point $z \neq 0 \in D$ is either cyclic or trivial.
Therefore, the proof of Theorem~A is reduced to show the existence of a sequence of points
$\{ Q_n\}$, $Q_n \neq 0$ for every $n \in \N$, with the properties indicated in the statement of Theorem~A.

For this, note that $\mathcal{U}$ is contained in the set $\mathcal{V}$ provided by Theorem~\ref{teo_MRR}.
In fact, for every reduced word $W (a,b)$, the element $W (\tilde{f}, \tilde{g})$ has only isolated fixed
points in ${\rm Dom}_W (D)$ so long $(h_1, h_2) \in \mathcal{U}$. Therefore
$W (\tilde{f}, \tilde{g})$ cannot coincide with the identity on any connected component of its
domain of definition ${\rm Dom}_W (D)$. It follows, in particular, that
{\it the germ}\, of $\Gamma_{h_1,h_2}$ at $0 \in \C$ is a
non-solvable group. Since the germ of $\Gamma_{h_1,h_2}$ at $0 \in \C$ is not solvable, there are points in $D$ that are hyperbolic
fixed points for certain elements of $\Gamma_{h_1,h_2}$, cf. \cite{Frank1}.
Furthermore, the multipliers of these fixed points can {\it a priori}\, be fixed in a dense set of $\C$.
However, inasmuch there are infinitely many points whose stabilizers
contain a hyperbolic element, it may happen that all these points are contained in a single orbit of the
pseudogroup $\Gamma_{h_1,h_2}$.

To complete the proof of the theorem, we proceed as follows. Let $Q_1$ be a hyperbolic fixed point of some element
$W_1 (\tilde{f}, \, \tilde{g}) \in \Gamma_{h_1,h_2}$. In particular the stabilizer of $Q_1$ is not trivial and hence
it must be cyclic. Thus we can assume that $W_1 (\tilde{f}, \, \tilde{g})$
is the generator of the stabilizer of $Q_1$.
As mentioned above, the orbit of $Q_1$ by $\Gamma_{h_1,h_2}$ is constituted by points whose stabilizers contain
some hyperbolic element, namely a certain conjugate of $W_1 (\tilde{f}, \, \tilde{g})$. Consider for each
point $\gamma. Q_1$ in the $Q_1$-orbit the multipliers of elements in the stabilizer of $\gamma. Q_1$.
The collection of all multipliers obtained from points in the $Q_1$-orbit is then denoted by $M_1$.
Now note that $M_1$ is a discrete subset of $\C$, indeed, $M_1$ is nothing but a cyclic subgroup of
$\C^{\ast}$ generated by the derivative of $W_1 (\tilde{f}, \, \tilde{g})$ at $Q_1$. Thus,
after \cite{Morefrank}, there must exist another hyperbolic fixed point $Q_2$ whose multiplier lies away
from a neighborhood of $M_1$ in $\C$. In particular, the orbits of $Q_1$ and $Q_2$ must be disjoint. However,
the preceding argument applies again to ensure that the orbit of $Q_2$ yields another discrete set of multipliers
$M_2 \subset \C$. The construction can then be continued to yield infinitely many hyperbolic fixed points
with pairwise disjoint orbits. Theorem~A is proved.
\end{proof}

Corollary~B is an immediate consequence of what precedes.

\begin{proof}[Proof of Corollary B]
The construction detailed in Section~5 of \cite{MRR} allows us to translate information on
the topology of the leaves of the corresponding foliations into dynamical properties of the pseudogroup
$\Gamma_{h_1,h_2}$, and conversely. By means of this connection, the items~(4) and~(5) of Corollary~B turn out to be implied by
Theorem~A. The remaining items were already established in \cite{MRR}. An alternative possibility is to resort
to the general statements of \cite{marinmattei}. In any event the proof of Corollary~B is completed.
\end{proof}

\section{Proof of Proposition~\ref{disjointfixedpoints}}

The rest of the paper is entirely devoted to the proof of Proposition~\ref{disjointfixedpoints}. This proof will be
accomplished in this section whereas the proof of a more technical lemma on the existence of suitable perturbations
will be supplied only in the last section of this article.

Let us start by explaining the strategy for proving Proposition~\ref{disjointfixedpoints}. Fix two reduced
incommensurable words $W_i (a,b)$ and $W_j (a,b)$ and suppose we are
given a pair $(h_1,h_2) \in \diffalpha \times \diffalpha$. We need to find $(\overline{h}_1, \overline{h}_2)
\in \mathcal{U}_{i,j}$ arbitrarily close to $(h_1,h_2)$. The existence of the desired pair $(\overline{h}_1, \overline{h}_2)$
will be shown by successively approximating
$(h_1,h_2)$ by elements in $\diffalpha \times \diffalpha$ that will fulfil ``more and more'' the conditions needed
to belong to $\mathcal{U}_{i,j}$. This will be done so that,
after finitely many steps, a pair
$(\overline{h}_1, \overline{h}_2) \in \mathcal{U}_{i,j}$ will be found in a given $\varepsilon$-neighborhood
of $(h_1,h_2)$.

This goes as follows. First, by using the ``denseness part'' of the statement of Proposition~\ref{herewego1} applied
to both words $W_i (a,b), \, W_j (a,b)$, we see that arbitrarily close to $(h_1,h_2)$ there is
$(\overline{h}_1, \overline{h}_2)$ leading to a pseudogroup $\overline{\Gamma}_{\overline{h}_1, \overline{h}_2}$
whose elements $W_i (\overline{h}_{1}^{-1} \circ f \circ \overline{h}_{1},
\overline{h}_{2}^{-1} \circ g \circ \overline{h}_{2})$ and $W_j (\overline{h}_{1}^{-1} \circ f \circ \overline{h}_{1},
\overline{h}_{2}^{-1} \circ g \circ \overline{h}_{2})$ possess only finitely many fixed points in their
closed domains of definitions ${\rm Dom}_{W_i} (\overline{D}) , \, {\rm Dom}_{W_j} (\overline{D})$.
In other words, to abridge notations, we can assume without loss of generality that the initial local diffeomorphisms
are already such that the words $W_i (h_{1}^{-1} \circ f \circ h_{1}, \, h_{2}^{-1} \circ g \circ h_{2})
= W_i (\tilde{f}, \tilde{g})$ and $W_j (h_{1}^{-1} \circ f \circ h_{1}, \, h_{2}^{-1} \circ g \circ h_{2}) =
W_j (\tilde{f}, \tilde{g})$ have only finitely many fixed points as elements of $\overline{\Gamma}_{h_1,h_2}$.

Denote then by ${\rm Fix}\, (W_{i} (\tilde{f}, \tilde{g}))$ (resp. ${\rm Fix}\, (W_{j} (\tilde{f}, \tilde{g}))$)
the set of fixed points of $W_{i} (\tilde{f}, \tilde{g})$ (resp. $W_{j} (\tilde{f}, \tilde{g})$) in its (closed)
domain of definition as element of $\overline{\Gamma}_{h_1,h_2}$. These sets are both finite. The proof of
Proposition~\ref{disjointfixedpoints} is essentially reduced to checking that
$(h_1,h_2)$ can be approximated by a pair $(\overline{h}_1, \overline{h}_2)$
yielding corresponding local diffeomorphisms $W_i (\overline{h}_{1}^{-1} \circ f \circ \overline{h}_{1},
\overline{h}_{2}^{-1} \circ g \circ \overline{h}_{2}) = W_i (\overline{f}, \, \overline{g})$
and $W_j (\overline{h}_{1}^{-1} \circ f \circ \overline{h}_{1},
\overline{h}_{2}^{-1} \circ g \circ \overline{h}_{2}) = W_j (\overline{f}, \, \overline{g})$ having no common fixed
point other than $0 \in \C$. The condition that the multiplicity of $0 \in \C$ as fixed point of
$W_i (\overline{f}, \, \overline{g}), \, W_j (\overline{f}, \, \overline{g})$ should not change under perturbations is
a minor one and it will be dealt with below. Consider then the problem of showing that
$W_i (\tilde{f}, \tilde{g}), \, W_j (\tilde{f}, \tilde{g})$ can be perturbed so as not to have common fixed point other
than $0 \in \C$. Note that the construction of the desired perturbations is a problem that is naturally localized
at the mentioned common fixed points. To explain this assertion and clarify
the rest of our strategy to approach Proposition~\ref{disjointfixedpoints}, suppose
for example that the initial pair $(h_1,h_2)$ is such that $p \neq 0$ is the only common fixed point for
$W_{i} (\tilde{f}, \tilde{g}), \, W_{j} (\tilde{f}, \tilde{g})$ away from $0 \in \C$.
Consider then a small disc $B (\delta)$ about $p$. Also the (closed) domain of definition of
$W_{i} (\tilde{f}, \tilde{g})$ (resp. $W_{j} (\tilde{f}, \tilde{g})$) is going to be denoted by
${\rm Dom}_{W_i} (\overline{D})$ (resp. ${\rm Dom}_{W_j} (\overline{D})$). According to
Lemma~\ref{extensionoverclosure} (cf. Corollary~\ref{corobyproduct} and Proposition~\ref{herewego1}), we can choose a (closed)
neighborhood $\overline{U}_i^{\epsilon}$ of ${\rm Dom}_{W_i} (\overline{D})$
(resp. $\overline{U}_j^{\epsilon}$ of ${\rm Dom}_{W_j} (\overline{D})$) where the following holds:
\begin{itemize}
  \item $W_{i} (\tilde{f}, \tilde{g})$ (resp. $W_{j} (\tilde{f}, \tilde{g})$)
  has a holomorphic extension to some neighborhood of $\overline{U}_i^{\epsilon}$ (resp. $\overline{U}_j^{\epsilon}$).

  \item The fixed points of $W_{i} (\tilde{f}, \tilde{g})$ (resp. $W_{j} (\tilde{f}, \tilde{g})$) in
  $\overline{U}_i^{\epsilon}$ (resp. $\overline{U}_j^{\epsilon}$) are all contained in
  ${\rm Dom}_{W_i} (\overline{D})$ (resp. ${\rm Dom}_{W_j} (\overline{D})$).

\end{itemize}
Now consider the compact set $\overline{U}_j^{\epsilon} \cap [\overline{U}_i^{\epsilon} \setminus B (\delta)]$ and
note that for every point $z \in \overline{U}_j^{\epsilon} \cap [\overline{U}_i^{\epsilon} \setminus B (\delta)]$,
we have $\Vert W_{i} (\tilde{f}, \tilde{g}) (z) - W_{j} (\tilde{f}, \tilde{g})(z) \Vert \neq 0$. Since this set
is compact, it follows the existence of some $\tau >0$ such that we actually have
$\Vert W_{i} (\tilde{f}, \tilde{g}) (z) - W_{j} (\tilde{f}, \tilde{g})(z) \Vert \geq \tau >0$ for every
$z \in \overline{U}_j^{\epsilon} \cap [\overline{U}_i^{\epsilon} \setminus B (\delta)]$. Next the reader is reminded that
convergence in the analytic topology implies convergence of domains of definition as well as uniform convergence of maps on
the corresponding domains. Thus, recalling that ${\rm Dom}_{W_j} (\overline{D})$ is contained in the interior of
$\overline{U}_j^{\epsilon}$, by taking $(\overline{h}_1, \overline{h}_2)$ very close to $(h_1,h_2)$ the following holds:
\begin{itemize}
  \item[($\imath$)] The closed domain of definition of $W_i (\overline{f}, \, \overline{g})$ (resp.
  $W_j (\overline{f}, \, \overline{g})$) is contained in $\overline{U}_i^{\epsilon}$ (resp. $\overline{U}_j^{\epsilon}$).

  \item[($\imath \imath$)] The local diffeomorphism $W_i (\overline{f}, \, \overline{g})$ (resp.
  $W_j (\overline{f}, \, \overline{g})$ possesses a holomorphic extension to a neighborhood of
  $\overline{U}_i^{\epsilon}$ (resp. $\overline{U}_j^{\epsilon}$).
\end{itemize}
It follows from~($\imath$) and~($\imath \imath$) that $W_i (\overline{f}, \, \overline{g}) (z) - W_j (\overline{f}, \, \overline{g}) (z)$
is defined for every $z \in \overline{U}_j^{\epsilon} \cap [\overline{U}_i^{\epsilon} \setminus B (\delta)]$. Next, by
uniform convergence, it also follows that $\Vert W_i (\overline{f}, \, \overline{g}) (z) - W_j (\overline{f}, \, \overline{g}) (z)
\Vert \geq \tau/2 >0$ for every $z \in \overline{U}_j^{\epsilon} \cap [\overline{U}_i^{\epsilon} \setminus B (\delta)]$.
Therefore we conclude that the all possible {\it common fixed points}\, of $W_i (\overline{f}, \, \overline{g})$
and $W_j (\overline{f}, \, \overline{g})$ lie in $B(\delta)$.
Thus, the proof Proposition~\ref{disjointfixedpoints}
is essentially reduced to an analysis of $W_{i} (\tilde{f}, \tilde{g})$ and of $W_{j} (\tilde{f}, \tilde{g})$ on
a neighborhood of their common fixed points. Namely we need to show that these common fixed points can be split by
arbitrarily small perturbations of the initial local diffeomorphisms $(h_1,h_2)$. In the sequel, we shall provide
full detail for this construction.

Summarizing what precedes, we can assume that both $W_i (\overline{f}, \, \overline{g})$ and
$W_j (\overline{f}, \, \overline{g})$ have only finitely many fixed points in their closed domains of definition
denoted respectively by ${\rm Dom}_{W_i} (\overline{D})$ and ${\rm Dom}_{W_j} (\overline{D})$. Moreover, we can choose
a neighborhood $U_i^{\epsilon}$ of ${\rm Dom}_{W_i} (\overline{D})$ (resp. $U_j^{\epsilon}$ of ${\rm Dom}_{W_j} (\overline{D})$)
such that $W_i (\tilde{f}, \tilde{g})$ (resp. $W_j (\tilde{f}, \tilde{g})$) has a holomorphic extension to a neighborhood
of $\overline{U}_i^{\epsilon}$ (resp. $\overline{U}_j^{\epsilon}$). Furthermore the fixed points of
$W_i (\tilde{f}, \tilde{g})$ (resp. $W_j (\tilde{f}, \tilde{g})$) in
$\overline{U}_i^{\epsilon}$ (resp. $\overline{U}_j^{\epsilon}$) are all contained in
${\rm Dom}_{W_i} (\overline{D})$ (resp. ${\rm Dom}_{W_j} (\overline{D})$).

Recalling that our purpose is to find $(\overline{h}_1, \overline{h}_2)
\in \mathcal{U}_{i,j}$ arbitrarily close to $(h_1,h_2)$, we begin with the following lemma.

\begin{lema}
\label{herewego2}
To construct $(\overline{h}_1, \overline{h}_2)$, we can assume that the multiplicity associated to each fixed point of both
$W_{i} (\tilde{f}, \tilde{g})$ and $W_{j} (\tilde{f}, \tilde{g})$ does not change under perturbations.
\end{lema}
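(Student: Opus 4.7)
The plan is to obtain such $(\overline{h}_1, \overline{h}_2)$ by a finite iterative perturbation argument that exploits the upper semi-continuity of zero multiplicity together with the Argument Principle already used in Proposition~\ref{herewego1}. The key observation is that, for a holomorphic family of maps, individual fixed-point multiplicities can only decrease under small perturbations, while the total multiplicity inside each small disc around an original fixed point is preserved.

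Concretely, let $\{p_k^{(i)}\}$ and $\{p_k^{(j)}\}$ denote the finite sets of fixed points of $W_i(\tilde{f}, \tilde{g})$ and $W_j(\tilde{f}, \tilde{g})$ in their closed domains of definition, with multiplicities $m_k^{(i)}$ and $m_k^{(j)}$ respectively. Choose pairwise disjoint small open discs $B_k^{(i)} \ni p_k^{(i)}$ and $B_k^{(j)} \ni p_k^{(j)}$ whose closures lie in $U_i^{\epsilon}$ and $U_j^{\epsilon}$ respectively and which contain no other fixed points of the corresponding maps. By the Argument Principle, applied as in Proposition~\ref{herewego1}, for every $(\overline{h}_1, \overline{h}_2)$ sufficiently close to $(h_1, h_2)$ in the analytic topology the sum of the multiplicities of the fixed points of $W_i(\overline{f}, \overline{g})$ inside $B_k^{(i)}$ equals exactly $m_k^{(i)}$, and analogously for $W_j$. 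Thus the multiplicity at any individual fixed point can only stay the same or strictly decrease (by splitting the original multiplicity among several new fixed points), and it cannot increase.

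Call the collection $\{m_k^{(i)}\} \cup \{m_k^{(j)}\}$ \emph{stable} at $(h_1, h_2)$ if it coincides with the analogous collection for every pair in some $C^{\omega}$-neighborhood of $(h_1, h_2)$. If it is already stable, there is nothing to do. Otherwise, by definition, there exists an arbitrarily small perturbation at which at least one original fixed point strictly splits and no multiplicity increases; replace $(h_1, h_2)$ by this new pair, at distance at most $\varepsilon / 2^k$ from the previous one at the $k$-th iteration. At each step, the total multiplicity in each $B_k^{(i)}$ (respectively $B_k^{(j)}$) is preserved while the number of fixed points inside it strictly increases; equivalently, the non-negative integer $\sum_k (m_k^{(i)} - 1) + \sum_k (m_k^{(j)} - 1)$ strictly decreases. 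Hence the procedure terminates after finitely many iterations, and the terminal pair lies within the prescribed $\varepsilon$-neighborhood of the original $(h_1, h_2)$ with stable multiplicities.

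The essential non-trivial ingredient, which I expect to be the main obstacle, is the existence of an arbitrarily small decreasing perturbation whenever the multiplicities are not yet stable. This reduces to the local assertion that if $W_i(\tilde{f}, \tilde{g})(z) - z$ has a zero of multiplicity $m \geq 2$ at a point $p$, then some small perturbation in $\diffalpha \times \diffalpha$ splits this zero into several distinct ones. This follows from the flexibility of conjugation by elements of $\diffalpha$: arbitrarily small global perturbations of $h_1$ and $h_2$ are enough to perturb the finite jets of $\tilde{f}$ and $\tilde{g}$ near $p$, and hence to break the algebraic degeneracy that forces the multiplicity to exceed $1$. This type of perturbation construction is the core of what is developed in the subsequent sections of the paper, and is in any case considerably weaker than the constructions needed there to separate common fixed points of $W_i(\tilde{f}, \tilde{g})$ and $W_j(\tilde{f}, \tilde{g})$ away from $0$.
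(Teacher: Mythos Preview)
Your iterative argument in the middle paragraph is correct and is essentially the paper's argument, just phrased differently: the paper says directly ``consider the least multiplicity that can be achieved for each fixed point by perturbing $(h_1,h_2)$; being local minima, these multiplicities cannot further decrease, and by the Argument Principle they cannot increase, hence they are stable''. Your descending-integer iteration is an equivalent way to reach the same conclusion.

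The problem is your last paragraph, which manufactures a difficulty that does not exist. You already defined ``stable'' to mean that the multiset of multiplicities is unchanged on a whole neighborhood; hence ``not stable'' \emph{by definition} furnishes arbitrarily close pairs where the multiset differs, and since total multiplicity in each small disc is preserved while no individual multiplicity can increase, any such change is necessarily a splitting. There is nothing further to prove, and in particular no appeal to the perturbation machinery of the later sections is needed. More importantly, your reduction to the claim ``every fixed point of multiplicity $m\geq 2$ can be split by some perturbation in $\diffalpha\times\diffalpha$'' is both unnecessary and stronger than what the lemma asserts: the lemma does \emph{not} say the stable multiplicities are all equal to~$1$, only that they do not change under further perturbation. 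The paper neither states nor proves the stronger splitting claim. Finally, the paper also treats briefly the possibility of fixed points lying on the boundary of the domain of definition, which you omit; this is a minor point but should be mentioned.
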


\begin{proof}
Consider the case of $W_{i} (\tilde{f}, \tilde{g})$.
Denote by $P_1, \ldots , P_s$ its fixed points. Suppose first that $W_{i} (\tilde{f}, \tilde{g})$
has no fixed point in the boundary of its domain of definition. Let $B_k(\delta)$ be a small ball about $P_k$, $k=1, \ldots ,s$
containing no other fixed point of $W_{i} (\tilde{f}, \tilde{g})$. Also denote by $N_k$ the
multiplicity of $P_k$ as fixed point of $W_{i} (\tilde{f}, \tilde{g})$. By continuity,
if $(\overline{h}_1,\overline{h}_2)$ is sufficiently close to $(h_1,h_2)$, then we have:
\begin{enumerate}
  \item The (closed) domain of definition of $W_{i} (\overline{h}_{1}^{-1} \circ f \circ
\overline{h}_{1}, \, \overline{h}_{2}^{-1} \circ g \circ \overline{h}_{2})$ is contained in $\overline{U}^{\epsilon}_i$.
  \item The diffeomorphism $W_{i} (\overline{h}_{1}^{-1} \circ f \circ
\overline{h}_{1}, \, \overline{h}_{2}^{-1} \circ g \circ \overline{h}_{2})$ has no fixed point in
$\overline{U}^{\epsilon}_i \setminus \bigcup_{k=1}^s B_k(\delta)$.
\end{enumerate}
Also, fixed $k$, the number of fixed points of $W_{i} (\overline{h}_{1}^{-1} \circ f \circ
\overline{h}_{1}, \, \overline{h}_{2}^{-1} \circ g \circ \overline{h}_{2})$ in $B_k(\delta)$ counted with their multiplicities is
precisely $N_k$, as it follows again from the Argument principle.
In particular, the multiplicity of a fixed point may decrease, but never increase, under perturbations. Thus, if there
are arbitrarily small perturbations for which each point $P_k$ splits into $N_k$
fixed points with multiplicity~$1$, then the statement
becomes immediate: we consider a first perturbation $(\overline{h}_1,\overline{h}_2)$ such that all fixed points become
of multiplicity~$1$ and then, we only need to approximate $(\overline{h}_1,\overline{h}_2)$ by elements
$(\overline{\overline{h}}_1, \overline{\overline{h}}_2)$ as above. Since, in this case,
every fixed point of
$W_{i} (\overline{h}_{1}^{-1} \circ f \circ \overline{h}_{1}, \, \overline{h}_{2}^{-1} \circ g \circ \overline{h}_{2})$
has multiplicity~$1$, this multiplicity will not change by perturbations as already seen.
The general case follows from this argument, we consider the least multiplicity that can be achieved for each fixed
point of $W_{i} (\tilde{f}, \tilde{g})$ by perturbing $(h_1,h_2)$. Being given by local minima,
these multiplicities cannot further decrease under perturbations. On the other hand, the previous general argument shows
that they cannot increase either.
Therefore they must remain constant what proves the lemma in this case.

Finally the argument when $W_{i} (\tilde{f}, \tilde{g})$ possesses fixed points in the boundary
of its domain of definition is essentially the same. These points are in finite number.
If by an arbitrarily small perturbation some of them fall
in the open domain and others fall away from the closed domain, then the situation is reduced to the
preceding case. Otherwise there are points
that remain in the boundary of the domain of definition of $W_{i} (\tilde{f}, \tilde{g})$ for
every sufficient small perturbation of $(h_1,h_2)$. The same argument above can then be applied to these fixed points.
\end{proof}

Owing to Lemma~\ref{herewego2}, we can assume without loss of generality that, in addition, the multiplicities
of the fixed points of $W_{i} (\tilde{f}, \tilde{g})$ and of $W_{j} (\tilde{f}, \tilde{g})$ do not change under perturbation
of $(h_1,h_2)$. Recalling that $W_{i} (\tilde{f}, \tilde{g})$ and $W_{j} (\tilde{f}, \tilde{g})$
have only isolated fixed points, the next lemma establishes that a common fixed point for
$W_{i} (\tilde{f}, \tilde{g})$ and $W_{j} (\tilde{f}, \tilde{g})$
can always be destroyed by arbitrarily small perturbations provided that $W_{i} (a,b),
\, W_{j} (a,b)$ are two incommensurable words. More precisely:

\begin{lema}
\label{herewego3}
Consider two incommensurable words $W_{i} (a,b), \, W_{j} (a,b)$ as above along with a given pair $(h_1,h_2) \in
\diffalpha \times \diffalpha$.
Suppose that $q \neq 0$ is a common fixed point for $W_{i} (\tilde{f}, \tilde{g})$ and $W_{j} (\tilde{f}, \tilde{g})$
(viewed as elements of $\Gamma_{h_1,h_2}$).
Let $B(\delta)$ be a small disc about $q$ containing no other fixed point of
$W_{i} (\tilde{f}, \tilde{g})$ and $W_{j} (\tilde{f}, \tilde{g})$. Then, arbitrarily close to
$(h_1,h_2)$, there is $(\overline{h}_1,\overline{h}_2)$ such that
$W_{i} (\overline{h}_1^{-1} \circ f \circ \overline{h}_{1}, \, \overline{h}_{2}^{-1}
\circ g \circ \overline{h}_{2})$ and $W_{j}
(\overline{h}_{1}^{-1} \circ f \circ \overline{h}_{1}, \, \overline{h}_{2}^{-1} \circ g \circ \overline{h}_{2})$
have no common fixed point in $B(\delta)$.
\end{lema}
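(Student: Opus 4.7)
My strategy is to embed $(h_1, h_2)$ in a one-parameter family of perturbations and exploit the incommensurability of $W_{i}, W_{j}$ to destroy the common fixed point at $q$ for all but discretely many values of the parameter.

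Set $F(z) = W_{i}(\tilde{f}, \tilde{g})(z) - z$ and $G(z) = W_{j}(\tilde{f}, \tilde{g})(z) - z$ on a neighborhood of $\overline{B(\delta)}$. By Lemma~\ref{herewego2}, Corollary~\ref{corobyproduct} and Lemma~\ref{forlaterreference1}, I may assume that the multiplicities of the zeros of $F$ and $G$ in $B(\delta)$ are stable under perturbation, so that every sufficiently small perturbation $(\overline{h}_1, \overline{h}_2)$ yields maps $\overline{F}, \overline{G}$ with precisely $m_{i}, m_{j}$ zeros (counted with multiplicity) in $B(\delta)$, all close to $q$. I would then construct a holomorphic one-parameter family $(h_1^t, h_2^t)_{t \in \D}$ in $\diffalpha \times \diffalpha$ passing through $(h_1, h_2)$ at $t = 0$, and set $F_t, G_t$ analogously. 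The condition that $F_t$ and $G_t$ share a zero in $B(\delta)$ is encoded by the vanishing of
\[
R(t) \; = \; \prod_{k=1}^{m_{i}} G_t \bigl( p_k (t) \bigr) \, ,
\]
where $\{p_k(t)\}_{k=1}^{m_{i}}$ denotes the list (with multiplicity) of zeros of $F_t$ in $B(\delta)$. Although the individual roots $p_k(t)$ may exhibit branching in $t$, the expression $R(t)$ is a symmetric function of them and therefore analytic in $t$.

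The crucial technical input, to be supplied by the perturbation construction of Section~5, is that the family $(h_1^t, h_2^t)$ can be chosen so that $R(t)$ is not identically zero. Granted this, since $R$ is analytic its zero set is discrete, so arbitrarily close to $t = 0$ there exist parameters with $R(t) \neq 0$; for any such $t$ the pair $(\overline{h}_1, \overline{h}_2) = (h_1^t, h_2^t)$ satisfies the conclusion of the lemma.

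The principal obstacle, and the content of the deferred construction, is the existence of the family with $R \not\equiv 0$: one must perturb $(h_1, h_2)$ in a direction that affects $F$ and $G$ asymmetrically at $q$, even though perturbing $(h_1, h_2)$ changes both generators $\tilde{f}, \tilde{g}$ and hence both words simultaneously. Heuristically, one locates an index in the spelling of $W_{i}(\tilde{f}, \tilde{g}) = H_s \circ \cdots \circ H_1$ whose associated point in the $q$-trajectory $q, H_1(q), H_2 H_1(q), \ldots$ does not appear on the analogous trajectory for $W_{j}(\tilde{f}, \tilde{g})$, and supports a small perturbation of $h_1$ or $h_2$ in a neighborhood of that point. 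The incommensurability of $W_{i}, W_{j}$ is exactly what guarantees such a desynchronized index exists; were $W_{i}$ and $W_{j}$ commensurable, each would be a power of a common word $W$, and then $q$ would be forced to remain a common fixed point under any perturbation, making the separation asserted by the lemma impossible.
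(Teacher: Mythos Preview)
Your resultant-style wrapper $R(t)=\prod_k G_t(p_k(t))$ is correct and pleasant, but it does not reduce the difficulty: showing that some one-parameter family has $R\not\equiv 0$ is equivalent to exhibiting a single perturbation of $(h_1,h_2)$ for which $W_i$ and $W_j$ no longer share a fixed point in $B(\delta)$, which is precisely the content of the lemma. So the entire weight of the proof rests on what you call ``the deferred construction,'' and here your heuristic is too optimistic.

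You assert that incommensurability of $W_i,W_j$ ``is exactly what guarantees'' the existence of a point on the $q$-itinerary of $W_i$ absent from that of $W_j$. This is not immediate: incommensurability is a statement about the abstract words, not about their itineraries at the specific point $q$, and those itineraries may coincide as \emph{sets} even for incommensurable words. The paper's proof confronts this directly. After preliminary reductions (Lemmas~\ref{minorlemma1}--\ref{minorlemma2}) ensuring neither word is a power of a shorter one fixing $q$, it invokes Lemma~\ref{herewego4} to arrange that the itineraries consist of pairwise distinct points \emph{while preserving} $q_0=q_l$; this alone requires a careful Lagrange-interpolation correction. Then a three-case analysis is needed: when the minimal conjugate of $W_i$ is strictly longer than $W_j$ a desynchronized point exists by counting, but when the lengths coincide and the two itineraries agree as sets (Case~3c), one must cyclically conjugate both words step by step until incommensurability forces a discrepancy in exponents, and close via the induction hypothesis on the shorter word produced. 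There is also a subtlety your heuristic misses (Case~2): even when a desynchronized point $q_N^1$ is found, it may occur \emph{twice} in the full $W_i$-itinerary (once in $W_3$ and once in $W_3^{-1}$), so a single polynomial perturbation supported there can cancel against itself; the paper resolves this with a second-layer perturbation supported at the displaced image $h_{1,t}^{-1}\circ h_1(q_N^1)$.

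In short, the gap is that you have identified the right mechanism (perturb at a point seen by one itinerary and not the other) but not the argument that such a point can always be produced, nor the induction on word length that underlies the whole proof.
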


After the preceding discussion,
Lemma~\ref{herewego3} is the main technical result needed for the proof of Proposition~\ref{disjointfixedpoints}.
We shall close this section with the proof of Proposition~\ref{disjointfixedpoints}. The next section will
be devoted to the constructions leading to the proof of Lemma~\ref{herewego3}

\begin{proof}[Proof of Proposition~\ref{disjointfixedpoints}]
The argument is now clear. Again denote by $P_1, \ldots , P_s$ the fixed points of, say,
$W_{i} (\tilde{f}, \tilde{g})$.
As explained in the proof of Lemma~\ref{herewego2}, modulo constructing a first perturbation of $(h_1,h_2)$,
we can assume that $W_{i} (\tilde{f}, \tilde{g})$
has no fixed points in the boundary of its domain of definition, unless the fixed points lying in this boundary remain
in it for every sufficiently small perturbation of
$(h_1,h_2)$. In the rest of the discussion, both possibilities will be treated together since there is no essential
difference between them.

Consider the above fixed neighborhood $U_i^{\epsilon}$ of the closed domain of definition of
$W_{i} (\tilde{f}, \tilde{g})$. Let $\delta >0$ very small be fixed. For every $k=1, \ldots ,s$, denote by $B_k (\delta)$
a small disc about the fixed point $P_k$. The choice of $\delta$ is dictated by the fact that the distance between
any pair of these discs must be strictly positive and by the fact that they should all be contained in a compact
part of $U_i^{\epsilon}$. By construction, $W_i (\tilde{f}, \tilde{g})$ has no
additional fixed point in $U_i^{\epsilon}$. Thus, if $(\overline{h}_1, \overline{h}_2)$ is close enough
to $(h_1,h_2)$, it follows that $W_i (\overline{f}, \, \overline{g}) = W_i (\overline{h}_1^{-1} \circ f
\circ \overline{h}_1, \overline{h}_2^{-1} \circ g \circ \overline{h}_2)$ satisfies the following conditions:
\begin{itemize}
  \item The closed domain of definition of $W_i (\overline{f}, \, \overline{g})$ is contained in $U_i^{\epsilon}$.
  \item $W_i (\overline{f}, \, \overline{g})$ has no fixed point in
  $\overline{U}_i^{\epsilon} \setminus \bigcup_{k=1}^s B_k (\delta)$.
\end{itemize}
Analogous conclusions hold for $W_j (\overline{f}, \, \overline{g}) = W_j (\overline{h}_1^{-1} \circ f
\circ \overline{h}_1, \overline{h}_2^{-1} \circ g \circ \overline{h}_2)$. In the sequel all perturbations will be chosen
small enough to guarantee that the final ``perturbed'' diffeomorphisms still satisfy the above conditions for
$W_i (\overline{f}, \, \overline{g})$ and for $W_j (\overline{f}, \, \overline{g})$.

In view of what precedes, and given that the multiplicities
of $P_1, \ldots , P_s$ do not change under perturbations, the number $s$ of fixed points of
$W_{i} (h_{1}^{-1} \circ f \circ h_{1}, \, h_{2}^{-1} \circ g \circ h_{2})$ will not change
under sufficiently small perturbations of $(h_1,h_2)$ (owing again to the Argument principle).

Consider again the above defined discs $B_k (\delta)$, $k=1, \ldots ,s$. Recall that every sufficiently
small perturbation $(\tilde{h}_1, \tilde{h}_2)$ of $(h_1,h_2)$ leads to a new local diffeomorphism
$W_i (\tilde{h}_1^{-1} \circ f \circ \tilde{h}_1, \,
\tilde{h}_2^{-1} \circ g \circ \tilde{h}_2)$ having exactly one fixed point in each small disc $B_k (\delta)$ and no
fixed point in the complement of $\bigcup_{k=1}^s B_k (\delta)$ in the closed domain of definition
of $W_i (\tilde{h}_1^{-1} \circ f \circ \tilde{h}_1, \, \tilde{h}_2^{-1} \circ g \circ \tilde{h}_2)$ itself.
Starting from $k=1$, suppose that $P_1$ is a common fixed point for $W_i (\overline{f}, \, \overline{g})$ and
$W_j (\overline{f}, \, \overline{g})$. Then Lemma~\ref{herewego3}
allows us to find an arbitrarily small perturbation $(\overline{h}_{1,1}, \overline{h}_{2,1})$ of $(h_1,h_2)$ such that
$W_{i} (\overline{h}_{1,1}^{-1} \circ f \circ \overline{h}_{1,1}, \, \overline{h}_{2,1}^{-1} \circ g \circ \overline{h}_{2,1})$
and $W_{j} (\overline{h}_{1,1}^{-1} \circ f \circ \overline{h}_{1,1}, \, \overline{h}_{2,1}^{-1} \circ g \circ \overline{h}_{2,1})$
have no longer a common fixed point in $B_1 (\delta)$.
Furthermore, if $(\overline{h}_{1,1}, \overline{h}_{2,1})$ is close enough to $(h_1,h_2)$, then
$W_{i} (\overline{h}_{1,1}^{-1} \circ f \circ \overline{h}_{1,1}, \, \overline{h}_{2,1}^{-1} \circ g \circ \overline{h}_{2,1})$
still satisfies the previous conditions regarding its own fixed points, which will now be denoted by
$P_{1,1}, \ldots , P_{s,1}$. In particular, each $P_{k,1}$ lies in $B_k (\delta)$. By construction $P_{1,1}$ is not
a common fixed point for
$W_{i} (\overline{h}_{1,1}^{-1} \circ f \circ \overline{h}_{1,1}, \, \overline{h}_{2,1}^{-1} \circ g \circ \overline{h}_{2,1})$
and $W_{j} (\overline{h}_{1,1}^{-1} \circ f \circ \overline{h}_{1,1}, \, \overline{h}_{2,1}^{-1} \circ g \circ \overline{h}_{2,1})$.
Thus, if no point $P_{k,1}$, $k=2, \ldots ,s$ turns out to be
fixed by $W_{j} (\overline{h}_{1,1}^{-1} \circ f \circ \overline{h}_{1,1}, \, \overline{h}_{2,1}^{-1} \circ g \circ \overline{h}_{2,1})$
then the statement is proved.

Thus let us suppose that $P_{2,1}$ is fixed also by
$W_{j} (\overline{h}_{1,1}^{-1} \circ f \circ \overline{h}_{1,1}, \, \overline{h}_{2,1}^{-1} \circ g \circ \overline{h}_{2,1})$.
By using again Lemma~\ref{herewego3} we can find a new perturbation $(\overline{h}_{1,2}, \overline{h}_{2,2})$ of
$(\overline{h}_{1,1}, \overline{h}_{2,1})$ so that
$W_{i} (\overline{h}_{1,2}^{-1} \circ f \circ \overline{h}_{1,2}, \, \overline{h}_{2,2}^{-1} \circ g \circ \overline{h}_{2,2})$ and
$W_{j} (\overline{h}_{1,2}^{-1} \circ f \circ \overline{h}_{1,2}, \, \overline{h}_{2,2}^{-1} \circ g \circ \overline{h}_{2,2})$
have no longer a common fixed point in $B_2 (\delta)$. Furthermore,
modulo choosing this perturbation sufficiently small, the following conditions can again be ensured:
\begin{itemize}
\item No common fixed point for $W_{i} (\overline{h}_{1,2}^{-1} \circ f \circ \overline{h}_{1,2}, \, \overline{h}_{2,2}^{-1} \circ g \circ \overline{h}_{2,2})$ and
$W_{j} (\overline{h}_{1,2}^{-1} \circ f \circ \overline{h}_{1,2}, \, \overline{h}_{2,2}^{-1} \circ g \circ \overline{h}_{2,2})$
is produced in $B_1 (\delta)$.

\item $W_{i} (\overline{h}_{1,1}^{-1} \circ f \circ \overline{h}_{1,1}, \, \overline{h}_{2,1}^{-1} \circ g \circ \overline{h}_{2,1})$
still has exactly $s$ fixed points, denoted by $P_{1,2}, \ldots , P_{s,2}$. Besides, for every $k=1, \ldots ,s$,
the fixed point $P_{k,2}$ belongs to the disc $B_k (\delta)$.
\end{itemize}
In particular, after this second perturbation,
$W_{i} (\overline{h}_{1,2}^{-1} \circ f \circ \overline{h}_{1,2}, \, \overline{h}_{2,2}^{-1} \circ g \circ \overline{h}_{2,2})$ and
$W_{j} (\overline{h}_{1,2}^{-1} \circ f \circ \overline{h}_{1,2}, \, \overline{h}_{2,2}^{-1} \circ g \circ \overline{h}_{2,2})$
can have at most $s-2$ common fixed points. By inductively continuing this argument, we shall eventually obtain a perturbation
$(\overline{h}_1, \overline{h}_2) = (\overline{h}_{1,s}, \overline{h}_{2,s})$ of $(h_1,h_2)$ satisfying the
condition required in the statement. The proof of Proposition~\ref{disjointfixedpoints} is over.
\end{proof}

\section{Constructing analytic perturbations: proof of Lemma~\ref{herewego3}}

In this last section we shall introduce some perturbation techniques leading to the proof of Lemma~\ref{herewego3}.
First, we assume that the conditions used in the previous section still hold in the present context. This means
that $W_i (\tilde{f}, \tilde{g})$ and $W_j (\tilde{f}, \tilde{g})$ have only isolated fixed points and, furthermore,
that the multiplicity of these fixed points do not change under perturbations of $(h_1,h_2)$.

The approach to the proof of Lemma~\ref{herewego3} begins
with some simple reductions in the statement. First it will be proved that,  if $q \in \C$, $q \ne 0$, is a fixed
point for $W_{i} (\tilde{f}, \tilde{g})$, then we can perturb $(h_1,h_2)$ into
$(\overline{h}_1, \overline{h}_2)$ so that $q$ is no longer fixed by
$W_{j} (\overline{h}_{1}^{-1} \circ f \circ \overline{h}_{1}, \, \overline{h}_{2}^{-1} \circ g \circ \overline{h}_{2})$.
Assuming that $q$ is a common fixed point for both $W_{i} (\tilde{f}, \tilde{g})$ and
$W_j (\tilde{f}, \tilde{g})$, the proof of Lemma~\ref{herewego3}
amounts to checking that such perturbation can be applied to, say, $W_{j} (\tilde{f}, \tilde{g})$ at $q$
while keeping the point $q$ fixed by
$W_{i} (\overline{h}_{1}^{-1} \circ f \circ \overline{h}_{1}, \, \overline{h}_{2}^{-1} \circ g \circ \overline{h}_{2})$.
As already explained, since the multiplicity of $q$ as fixed point of $W_{i} (\tilde{f}, \tilde{g})$ does not change
under perturbations of $(h_1,h_2)$, it follows that for every pair $(\overline{h}_1, \overline{h}_2)$ sufficiently close to
$(h_1,h_2)$, $W_{i} (\overline{h}_{1}^{-1} \circ f \circ \overline{h}_{1}, \, \overline{h}_{2}^{-1} \circ g \circ \overline{h}_{2})$
will still have a unique fixed point on a fixed neighborhood of $q$. Thus, if $q$ remains fixed by the perturbed
diffeomorphism $W_{i} (\overline{h}_{1}^{-1} \circ f \circ \overline{h}_{1}, \, \overline{h}_{2}^{-1} \circ g \circ \overline{h}_{2})$,
it follows that this diffeomorphism cannot have additional fixed points in the neighborhood in question. In other words,
$W_{i} (\overline{h}_{1}^{-1} \circ f \circ \overline{h}_{1}, \, \overline{h}_{2}^{-1} \circ g \circ \overline{h}_{2})$ and
$W_{j} (\overline{h}_{1}^{-1} \circ f \circ \overline{h}_{1}, \, \overline{h}_{2}^{-1} \circ g \circ \overline{h}_{2})$
have no common fixed point in $B(\delta)$ as desired.

Let us begin by making accurate the first statement above.

\begin{lema}
\label{directuse}
Consider the element $W (\tilde{f},\, \tilde{g})$ viewed as belonging to the pseudogroup $\Gamma_{h_1,h_2}$.
Suppose that $q \ne 0$ lies in the (open) domain of definition of $W (\tilde{f},\, \tilde{g})$. Suppose also that
$W (\tilde{f},\, \tilde{g}) (q) = q$.
Then, there is $(h_{1,\ast}, h_{2,\ast}) \in \diffalpha \times \diffalpha$ arbitrarily close to $(h_1,h_2)$
and such that the following holds:
\begin{itemize}
\item[(a)] $q$ lies in the (open) domain of definition of $W (h_{1,\ast}^{-1} \circ f \circ h_{1,\ast}, \,
h_{2,\ast}^{-1} \circ g \circ h_{2,\ast})$ viewed as element of the pseudogroup generated by
$h_{1,\ast}^{-1} \circ f \circ h_{1,\ast}, \, h_{2,\ast}^{-1} \circ g \circ h_{2,\ast}$ on $D$.

\item[(b)] $W  (h_{1,\ast}^{-1} \circ f \circ h_{1,\ast}, \, h_{2,\ast}^{-1} \circ g \circ h_{2,\ast}) (q) \neq q$.
\end{itemize}
\end{lema}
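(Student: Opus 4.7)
The plan is to construct small polynomial perturbations $h_{1,\ast} = h_1 + \mu \psi_1$ and $h_{2,\ast} = h_2 + \mu \psi_2$, where $\psi_1, \psi_2$ are polynomials vanishing to order $\alpha+1$ at the origin (so that $h_{i,\ast} \in \diffalpha$) and $\mu$ is a small complex parameter. Since polynomials are entire and have only finitely many nonzero Taylor coefficients, such perturbations are arbitrarily small in the analytic topology as $\mu \to 0$. Condition~(a) is preserved automatically by continuity of the pseudogroup domains, since $q$ lies in the \emph{open} domain of $W(\tilde{f}, \tilde{g})$ and small perturbations preserve open inclusions. The whole burden is to arrange condition~(b), namely $W(\tilde{f}_\ast, \tilde{g}_\ast)(q) \ne q$.

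Write $W(\tilde{f}, \tilde{g}) = H_s \circ \cdots \circ H_1$ with $H_i = h_{j_i}^{-1} \circ F_i \circ h_{j_i}$, $F_i \in \{f^{\pm 1}, g^{\pm 1}\}$, $j_i \in \{1,2\}$, and trace the trajectory $q = p_0, p_1, \ldots, p_s = q$ together with the sub-intermediate points $u_i = h_{j_i}(p_{i-1})$. Because $q \ne 0$ and each of $h_1, h_2, f, g$ fixes $0$ as an isolated point, none of the $p_i$ or $u_i$ equals $0$. By Lagrange interpolation one can produce polynomials $\psi_j(z) = z^{\alpha+1} Q_j(z)$ taking arbitrary prescribed values at any finite set of nonzero points, giving independent control over $\psi_1, \psi_2$ at the finitely many trajectory points. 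The chain rule then yields a first-order variation of the form
$$
L(\psi_1, \psi_2) \;=\; \sum_{i=1}^{s} \frac{\Lambda_i}{h_{j_i}'(p_i)}\Bigl[F_i'(u_i)\, \psi_{j_i}(p_{i-1}) \;-\; \psi_{j_i}(p_i)\Bigr],
$$
where $\Lambda_i = \prod_{k > i} H_k'(p_{k-1}) \ne 0$ and each individual coefficient is a nonzero product of derivatives of local diffeomorphisms.

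The main obstacle is to show that this functional $L$ does not vanish identically on the space of admissible $(\psi_1, \psi_2)$; equivalently, that once contributions coming from points visited several times by the trajectory are grouped together, some combined coefficient survives. In the generic situation (all $p_i$ distinct, no algebraic cancellations), this is immediate, and choosing the $\psi_j$'s to isolate a single evaluation yields $W_\mu(q) = q + \mu L(\psi_1, \psi_2) + O(\mu^2) \ne q$ for small $\mu \ne 0$. Delicate cases arise when the trajectory revisits a point or when $q$ is a multiple fixed point of $W$: for example, already for $W = \tilde{f}^2$ with $W'(q) = 1$, a direct computation shows that the coefficient of $\psi_1(p_1)$ cancels identically and the coefficient of $\psi_1(q)$ reduces to $(W'(q) - 1)/h_1'(q) = 0$, so $L \equiv 0$. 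In such degenerate configurations one must pass to higher-order variations in $\mu$: since the reductions in Section~4 ensure that $W$ has only isolated fixed points, $W(z) - z$ vanishes at $q$ to some finite multiplicity $m \ge 1$, and a direct analysis of the expansion shows that for a polynomial perturbation with $\psi_j(q) \ne 0$ the coefficient of $\mu^m$ in $W_\mu(q) - q$ is nonzero, yielding $W_\mu(q) \ne q$ for all sufficiently small $\mu \ne 0$. Carrying out this case analysis carefully, while simultaneously verifying that the perturbed diffeomorphism remains well-defined in the pseudogroup on $\overline{D}$, is the content of the remainder of the section.
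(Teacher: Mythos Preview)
Your first-order variation set-up is correct and, in the situation where the ``coarse'' itinerary points $q_0,\ldots,q_{l-1}$ (one per block $\vartheta_i^{r_i}$) are pairwise distinct, it coincides with the paper's argument: one perturbs only at $q_{l-1}$ and checks that the last step moves.  The gap lies in your treatment of the degenerate cases.

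Your claim that ``the coefficient of $\mu^m$ in $W_\mu(q)-q$ is nonzero'' whenever $\psi_j(q)\neq 0$ is not justified, and there is no reason for the $\mu$-order of vanishing of $\mu\mapsto W_\mu(q)-q$ to be governed by the $z$-multiplicity $m$ of $q$ as a fixed point of $W$. These are expansions in different variables. The identity you implicitly use, that perturbing $h_j$ simply translates the fixed-point set of $W$, is only valid for length-one words (where $W(\tilde f,\tilde g)=h_1^{-1}f^{r_1}h_1$ is a global conjugate of the fixed map $f^{r_1}$); for mixed words the dependence of $W_\mu$ on $\mu$ is not a conjugation and the argument breaks down.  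Moreover, your own computation already shows that inside a block $\vartheta_i^{r_i}$ the values of $\psi_j$ at the intermediate fine-itinerary points cancel identically in $L$; this means the relevant evaluation points are exactly the coarse itinerary points $q_0,\ldots,q_l$, and when these coincide the higher-order analysis you allude to is the whole problem, not a routine case check. Finally, invoking ``the reductions in Section~4'' to assume isolated fixed points is circular: the lemma is applied (inside its own inductive proof and later in the section) to arbitrary sub-words $W'$, for which no such reduction is available.

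The paper avoids all of this by inducting on the length $l$ of $W(a,b)=\vartheta_l^{r_l}\ast\cdots\ast\vartheta_1^{r_1}$. The base case $l=1$ is immediate: $W(\tilde f,\tilde g)=h_1^{-1}f^{r_1}h_1$, so $q$ is fixed iff $h_1(q)$ lies in the (isolated) fixed-point set of $f^{r_1}$, and a small perturbation of $h_1$ moves $h_1(q)$ off that set. For the inductive step, one first uses the induction hypothesis on the shorter sub-words $\vartheta_{k_2}^{r_{k_2}}\ast\cdots\ast\vartheta_{k_1+1}^{r_{k_1+1}}$ to destroy any coincidence $q_{k_1}=q_{k_2}$ with $0\le k_1<k_2<l$; since each such condition is open and there are finitely many, one arrives at a nearby $(h_1,h_2)$ for which $q_0,\ldots,q_{l-1}$ are pairwise distinct. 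At that point your first-order argument (equivalently: set $h_{2,t}=h_2+t z^{\alpha+1}P$ with $P(q_0)=\cdots=P(q_{l-2})=0$, $P(q_{l-1})\neq 0$, assuming $\vartheta_l=b$) finishes the proof in one line.
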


\begin{proof}
Condition~(a) is always satisfied provided that $(h_{1,\ast}, h_{2,\ast})$ is very close to $(h_1,h_2)$. Thus we only need
to prove that, arbitrarily close to $(h_1,h_2)$, there is
$(h_{1,\ast}, h_{2,\ast}) \in \diffalpha \times \diffalpha$ satisfying condition~(b).

Consider the spelling of $W (a,b)$ under the form $W (a,b) = \vartheta_l^{r_l} \ast \cdots \ast \vartheta_1^{r_1}$.
The proof of the existence
of $(h_{1,\ast}, h_{2,\ast})$ satisfying condition~(b) and arbitrarily close to $(h_1,h_2)$ is going to be carried out by
induction on $l$. Suppose first that $l$ equals to~$1$. In this case, the statement follows at once from the fact that $f$ does not
have a Cremer point at $0 \in \C$, cf. Lemma~\ref{nocremer}.

By inducting on the length of the words, the proposition can be assumed to hold for words of length $1, \ldots ,l-1$.
We need to show that it also holds for words of length~$l$. First consider the itinerary $q=q_0, \ldots , q_{l-1}, q_l$
of $q$ under $W (h_1^{-1} \circ f \circ h_1, \, h_2^{-1} \circ g \circ h_2) = W (\tilde{f}, \tilde{g})$.
By assumption we have $q=q_0 =q_l$. The induction assumption allows us to suppose that the points $q_0 , \ldots , q_{l-1}$ are
pairwise distinct. Indeed, given $0 \leq k_1 < k_2 < l$, we have that
$q_{k_2} = W' (\tilde{f}, \tilde{g}) (q_{k_1})$ where $W'(a,b)$ is a word whose length
is at most $l-1$. Thus, by the induction assumption, $(h_1,h_2)$ can be perturbed into
$(h_{1, \ast} ,h_{2, \ast}) \in \diffalpha \times \diffalpha$ so as to satisfy
$q_{k_2}  = W' (h_{1, \ast}^{-1} \circ f \circ h_{1, \ast} , \, h_{2, \ast}^{-1} \circ g \circ h_{2, \ast} ) (q_{k_1})
\neq q_{k_1}$. Since, once obtained, the condition
$q_{k_2}  = W' (h_{1, \ast}^{-1} \circ f \circ h_{1, \ast} , \, h_{2, \ast}^{-1} \circ g \circ h_{2, \ast} ) (q_{k_1}) \neq q_{k_1}$
is open, the fact that there are only finitely many words $W' (a,b)$ that need to be considered
allows us to construct a first perturbation
$(\tilde{h}_{1,\ast}, \tilde{h}_{2,\ast}) \in \diffalpha \times \diffalpha$
of $(h_1,h_2)$ so that the itinerary of $q=q_0$ by
$W (\tilde{h}_{1,\ast}^{-1} \circ f \circ \tilde{h}_{1, \ast}, \, \tilde{h}_{2,\ast}^{-1} \circ g
\circ \tilde{h}_{2,\ast})$ satisfies the required condition. In other words, we can assume without loss
of generality that the itinerary $q=q_0, \ldots , q_{l-1}, q_l$ of $q$ under
$W (h_1^{-1} \circ f \circ h_1, \, h_2^{-1} \circ g \circ h_2)$ is such that the points $q_0 , \ldots , q_{l-1}$ are pairwise distinct.

Let us now construct pairs of local diffeomorphisms $(h_{1,\ast}, h_{2,\ast}) \in
\diffalpha \times \diffalpha$ arbitrarily close to $(h_1,h_2)$
and such that $W (h_{1,\ast}^{-1} \circ f \circ h_{1,\ast}, \, h_{2,\ast}^{-1} \circ g \circ h_{2,\ast}) (q) \neq q$.
First, since $W (a,b) = \vartheta_l^{r_l} \ast \cdots \ast \vartheta_1^{r_1}$, with $l \geq 2$, we shall assume that
$\vartheta_1$ takes on the value $a$ (with $r_1 > 0$) and that $\vartheta_l$ takes on the value $b$ (with $r_l> 0$).
Note that, if the word $W (a,b)$ is such that $\vartheta_1, \vartheta_l$ takes on the same value ($a$ or $b$),
then $W (f,g)$ is conjugate to a word of smaller length and the desired conclusion can immediately be derived.

Let $P$ be a polynomial such that $P(q_0) = \cdots = P (q_{l-2}) =0$ and $P (q_{l-1}) \neq 0$. Since $\vartheta_l$
takes on the value $b$, we set
$$
h_{1,t} = h_1 \, \; {\rm and} \, \ h_{2,t} = h_2 + tz^{\alpha+1}P
$$
where $t \in [0,1]$. Clearly $h_{2,t}$ converges to $h_2$ in the analytic topology when $t \rightarrow 0$
and $h_{2,t} \in \diffalpha$ for every $t \in [0,1]$. Therefore, to conclude the proof, it suffices to show that
$W (h_{1,t}^{-1} \circ f \circ h_{1,t}, \, h_{2,t}^{-1} \circ g \circ h_{2,t}) (q) \neq q$ for arbitrarily small
$t > 0$ (strictly). As already observed, for $t$ sufficiently small $q$ belongs to the domain of definition of
$W(h_{1,t}^{-1} \circ f \circ h_{1,t}, \, h_{2,t}^{-1} \circ g \circ h_{2,t})$ viewed
as an element of the pseudogroup generated on the open disc $D$ by
$h_{1,t}^{-1} \circ f \circ h_{1,t}, \, h_{2,t}^{-1} \circ g \circ h_{2,t}$.
The corresponding itinerary is going to be denoted by $q = q_{0,t}, \ldots , q_{l-2,t}, q_{l-1,t}$ and
$q_{l,t}^{l} = h_{2,t}^{-1} \circ g^{r_l} \circ h_{2,t} ( q_{l-1,t})$. By construction, it follows that
$q_k =q_{k,t}$ for $k=0,1, \ldots ,l-1$. However, $h_{2,t} (q_{l-1}) = h_{2,t} (q_{l-1,t}) \neq h_2 (q_{l-1,t})$.
Now the assumption concerning the injective character of both $h_2^{\pm 1}, g^{\pm 1}$ on the domains in question
implies that $q =q_0 = q_l = W(h_1^{-1} \circ f \circ h_1, \, h_2^{-1} \circ g
\circ h_2) (q_0) \neq W(h_{1,t}^{-1} \circ f \circ h_{1,t}, \, h_{2,t}^{-1} \circ g \circ h_{2,t}) (q_0)$
for every $t > 0$ sufficiently small. The lemma is proved.
\end{proof}

As already been mentioned, our strategy consists of showing that perturbations as in Lemma~\ref{directuse} can be applied
to $W_j (\tilde{f}, \tilde{g})$ while keeping $q$ as a fixed point of
$W_{i} (h_{1,\ast}^{-1} \circ f \circ h_{1,\ast}, \, h_{2,\ast}^{-1} \circ g \circ h_{2,\ast})$. The rest of the material
goes in this direction.

Let us write $W_{i} (a,b) = \vartheta_{l}^{r_l} \ast \cdots \ast \vartheta_1^{r_1}$ and $W_{j} (a,b) = \vartheta_{m}^{s_m}
\ast \cdots \ast \vartheta_1^{s_1}$. Note that $\vartheta_1$ on $W_{i}$ does not necessarily take on the same value of
$\vartheta_1$ on $W_{j}$. Nonetheless, although the same notation is used for simplicity, throughout the text, each time
we refer to $\vartheta_1$ it will be explicitly mentioned if we are considering $\vartheta_1$ on $W_{i}$ or in $W_{j}$.
Modulo re-labeling these two words, we always assume that $m \leq l$. The construction of the required perturbation
$(\overline{h}_1, \overline{h}_2)$ will be carried out by induction on $m$, i.e. on the length of the shorter word. To initialize
the induction, note that for every pair $(h_1,h_2) \in \diffalpha \times \diffalpha$,
the map $W_{j} (h_{1}^{-1} \circ f \circ h_{1}, \, h_{2}^{-1} \circ g \circ h_{2})$ has no isolated fixed points provided
that $m=1$ as it follows from
the fact that none of the local diffeomorphisms $f, g$ has a Cremer point at the origin, cf. Lemma~\ref{nocremer}.
The statement is then immediately true regardless of the value of $l$.
Therefore, by means of the induction, we assume that the existence of the desired perturbation $(\overline{h}_1, \overline{h}_2)$ was
already established for $1, \ldots ,m-1$ and every $l \in \N$. All we need to prove is the existence of
$(\overline{h}_1, \overline{h}_2)$ arbitrarily close to $(h_1,h_2)$ such that $q$ remains fixed by exactly one of the
words $W_{i} (\overline{h}_{1}^{-1} \circ f \circ \overline{h}_1, \, \overline{h}_{2}^{-1} \circ g \circ \overline{h}_{2})$
and $W_{j} (\overline{h}_{1}^{-1} \circ f \circ \overline{h}_1, \, \overline{h}_{2}^{-1} \circ g \circ \overline{h}_{2})$.

\begin{lema}
\label{minorlemma1}
We can assume that $W_i (a,b)$ (resp. $W_j (a,b)$) is not a power of some (third) word $W_3 (a,b)$
verifying $W_3 (h_{1}^{-1} \circ f \circ h_{1}, \, h_{2}^{-1} \circ g \circ h_{2}) (q) = q$.
\end{lema}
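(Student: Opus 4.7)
The plan is to observe that, under the hypothesis that $W_i(a,b) = W_3(a,b)^k$ for some strictly shorter word $W_3(a,b)$ satisfying $W_3(\tilde{f}, \tilde{g})(q) = q$, the task of Lemma~\ref{herewego3} for the pair $(W_i, W_j)$ is subsumed by the analogous task for the (strictly shorter) pair $(W_3, W_j)$. Iterating this replacement terminates at a word $W_i^{\ast}$ that is not a power of any strictly shorter word fixing $q$; the symmetric argument applied to $W_j$ then completes the reduction.

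The first point to verify is that $W_3(a,b)$ and $W_j(a,b)$ remain incommensurable. Indeed, were $W_3 = W_4^p$ and $W_j = W_4^s$ for a common word $W_4$ in $G_1 \ast G_2$, then $W_i = W_3^k = W_4^{pk}$ would be commensurable with $W_j = W_4^s$, contradicting our standing assumption. The second and crucial point is that, as recalled at the beginning of this section, the real goal in proving Lemma~\ref{herewego3} is to produce an arbitrarily small perturbation $(\overline{h}_1, \overline{h}_2)$ of $(h_1, h_2)$ under which $q$ remains fixed by $W_i$ built from $(\overline{h}_1, \overline{h}_2)$ while no longer being fixed by the corresponding $W_j$. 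The identity $W_i = W_3^k$ now yields the tautological implication that, whenever
$$W_3(\overline{h}_1^{-1} \circ f \circ \overline{h}_1,\, \overline{h}_2^{-1} \circ g \circ \overline{h}_2)(q) = q,$$
one automatically has $W_i(\overline{h}_1^{-1} \circ f \circ \overline{h}_1,\, \overline{h}_2^{-1} \circ g \circ \overline{h}_2)(q) = q$ as well. Hence any perturbation which solves the perturbation task for the incommensurable pair $(W_3, W_j)$, in the sense of preserving $W_3$'s fixation of $q$ while destroying $W_j$'s, a fortiori solves the same task for $(W_i, W_j)$.

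Since $|W_3(a,b)| < |W_i(a,b)|$ and the lengths involved are positive integers, iterating the above replacement terminates after finitely many steps at a word $W_i^{\ast}(a,b)$ which is not a power of any strictly shorter word satisfying the fixing condition at $q$. By the argument just given, $W_i^{\ast}$ remains incommensurable with $W_j$, both still fix $q$, and solving the perturbation problem for $(W_i^{\ast}, W_j)$ solves it for the original pair $(W_i, W_j)$. Applying exactly the same reasoning with the roles of $W_i$ and $W_j$ interchanged yields the full statement of the lemma. There is no real obstacle: the only points to track through the procedure are the preservation of incommensurability and the elementary implication $W_3(\cdot)(q) = q \Rightarrow W_3^k(\cdot)(q) = q$, both of which are immediate.
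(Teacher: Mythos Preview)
Your argument is correct and follows essentially the same reduction as the paper: replace $W_i$ (resp.\ $W_j$) by the shorter word $W_3$ fixing $q$, observe that incommensurability is preserved, and use the implication $W_3(\cdot)(q)=q \Rightarrow W_3^k(\cdot)(q)=q$ to conclude that solving the task for the shorter pair solves it for the original. The only stylistic difference is that for the $W_j$ case the paper, rather than iterating symmetrically, directly invokes the ambient induction hypothesis on $m=|W_j|$ (since $|W_3|<m$ the result is already available), which terminates that branch in one step; your iterated descent achieves the same end.
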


\begin{proof}
Consider first $W_i (a,b)$ and suppose it is a power of $W_3 (a,b)$ verifying $W_3 (h_{1}^{-1} \circ f \circ h_{1},
\, h_{2}^{-1} \circ g \circ h_{2}) (q) =q$. In this situation, to establish Lemma~\ref{herewego3},
it suffices to work with $W_3 (a,b)$ and $W_j (a,b)$. In fact, if
the statement is verified for these two latter words, the ``new'' fixed point $\overline{q}$ of
$W_3 (\overline{h}_{1}^{-1} \circ f \circ \overline{h}_1, \, \overline{h}_{2}^{-1} \circ g \circ \overline{h}_{2})$
is automatically a fixed point for
$W_i (\overline{h}_{1}^{-1} \circ f \circ \overline{h}_1, \, \overline{h}_{2}^{-1} \circ g \circ \overline{h}_{2})$
as well and, hence, it is the unique fixed point of
$W_i (\overline{h}_{1}^{-1} \circ f \circ \overline{h}_1, \, \overline{h}_{2}^{-1} \circ g \circ \overline{h}_{2})$
in $B(\delta)$.

The case of $W_j (a,b)$ is automatic: if $W_j (a,b)$ is a power of $W_3 (a,b)$ verifying $W_3 (h_{1}^{-1} \circ f
\circ h_{1}, \, h_{2}^{-1} \circ g \circ h_{2}) (q) =q$, then the induction assumption ensures the existence
of an arbitrarily small perturbation $(h_{1,\ast}, h_{2,\ast})$ such that
$W_i (\overline{h}_{1}^{-1} \circ f \circ \overline{h}_1, \, \overline{h}_{2}^{-1} \circ g \circ \overline{h}_{2}) (q) \neq q$
while $W_3 (\overline{h}_{1}^{-1} \circ f \circ \overline{h}_1, \, \overline{h}_{2}^{-1} \circ g \circ \overline{h}_{2}) (q) =q$.
Clearly $q$ must still be the unique fixed point of
$W_j (\overline{h}_{1}^{-1} \circ f \circ \overline{h}_1, \, \overline{h}_{2}^{-1} \circ g \circ \overline{h}_{2})$
in $B (\delta)$ so that the lemma follows.
\end{proof}

Next consider the itinerary $q = q_0^1, \ldots, q_l^1 = q_0^1$ of $q$ under $W_i (\tilde{f}, \tilde{g})$
and the itinerary $q = q_0^2, \ldots, q_m^2 = q_0^2$ of $q$ under $W_j (\tilde{f}, \tilde{g})$.

\begin{lema}
\label{minorlemma2}
We can suppose, without loss of generality, that $q_k^1 \neq q_0^1$ for $k=1, \ldots , l-1$ and that $q_k^2 \neq q_0^2$ for
$k=1, \ldots , m-1$.
\end{lema}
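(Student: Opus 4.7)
The lemma will be proved by a reduction argument, carried out by induction on the total length $l + m$ of the pair $(W_i, W_j)$; the base case $l + m = 2$ (i.e.\ $l = m = 1$) is immediate from Lemma~\ref{nocremer}, since a local diffeomorphism without a Cremer point has no isolated fixed points arbitrarily close to $0$ other than $0$ itself. Suppose now the conclusion concerning $W_i$ fails at a minimal index $k$ with $1 \le k \le l - 1$, so that $q_k^1 = q_0^1 = q$. Denote by $W'_i = \vartheta_k^{r_k} \ast \cdots \ast \vartheta_1^{r_1}$ the prefix of $W_i$ of length $k$. By construction $W'_i(\tilde{f}, \tilde{g})(q) = q$ and, by minimality of $k$, the itinerary of $q$ under $W'_i$ does not return prematurely to $q$. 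The plan is to replace $W_i$ by the strictly shorter $W'_i$ throughout the subsequent analysis, thereby reducing to a pair with simple itinerary.

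To justify this replacement two issues must be addressed. First, I verify that $W'_i$ and $W_j$ are incommensurable: were they commensurable, then Lemma~\ref{minorlemma1} (which forces $W_j$ to be primitive with respect to fixing $q$) would imply that $W'_i$ is a power of $W_j$. Writing the suffix $W''_i$ (which also fixes $q$, since $W_i = W''_i \ast W'_i$ and $W_i(\tilde{f}, \tilde{g})(q) = q$) in the same primitive basis by a further application of Lemma~\ref{minorlemma1}, a combinatorial analysis in the free product $G_1 \ast G_2$ would exhibit $W_i$ itself as a power of $W_j$, contradicting the standing incommensurability of $(W_i, W_j)$. Second, since $k < l$ we have $k + m < l + m$, so the inductive hypothesis applies to $(W'_i, W_j)$ and furnishes a perturbation $(\overline{h}_1, \overline{h}_2)$ arbitrarily close to $(h_1, h_2)$ for which $W'_i(\overline{f}, \overline{g})$ and $W_j(\overline{f}, \overline{g})$ share no fixed point in $B(\delta)$.

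The principal obstacle is the transfer step: deducing from the separation of fixed points of the pair $(W'_i, W_j)$ the analogous separation for the original pair $(W_i, W_j)$. The argument rests on the Argument-principle/multiplicity framework set up in Lemma~\ref{herewego2} and Lemma~\ref{forlaterreference1}. Any putative common fixed point $\overline{q}$ of $W_i(\overline{f}, \overline{g})$ and $W_j(\overline{f}, \overline{g})$ in $B(\delta)$ is forced, by preservation of multiplicities, to lie in a prescribed small neighborhood of $q$; iterating the same prefix reduction on the shorter words $W'_i$ and $W''_i$ then shows that $\overline{q}$ must itself be a fixed point of $W'_i(\overline{f}, \overline{g})$, contradicting the separation just obtained. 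The case of the $W_j$-itinerary, namely $q_k^2 \neq q_0^2$ for $k = 1, \ldots, m - 1$, is handled by the symmetric argument with the roles of $W_i$ and $W_j$ exchanged.
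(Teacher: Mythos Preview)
Your argument has two real gaps.

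The incommensurability check does not go through as written. Lemma~\ref{minorlemma1} only lets you assume $W_j$ is not a concatenation-power of a shorter word \emph{that fixes $q$}. If $W'_i$ and $W_j$ are commensurable with common factor $W$, the element $W(\tilde f,\tilde g)$ need not fix $q$ --- it sends $q$ to an intermediate point of $W_j$'s itinerary --- so Lemma~\ref{minorlemma1} says nothing about $W$, and you cannot conclude that $W=W_j$ nor that $W'_i$ is a power of $W_j$. The subsequent ``combinatorial analysis'' is therefore unsupported.

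More seriously, the transfer step fails. Having perturbed so that $W'_i(\overline f,\overline g)$ and $W_j(\overline f,\overline g)$ share no fixed point in $B(\delta)$, you claim that any surviving common fixed point $\overline q$ of $W_i,W_j$ must be fixed by $W'_i$. But the coincidence $q_k^1=q$ was a feature of the \emph{unperturbed} itinerary of $q$; for the perturbed maps, the $k$-th point of the itinerary of $\overline q$ under $W_i(\overline f,\overline g)$ is merely close to $\overline q$, not equal to it. Thus $W'_i(\overline f,\overline g)(\overline q)\ne\overline q$ is entirely compatible with $W_i(\overline f,\overline g)(\overline q)=\overline q$, and no contradiction arises. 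The phrase ``iterating the same prefix reduction on $W'_i$ and $W''_i$'' does not repair this, since that reduction was defined via the itinerary of $q$, not of $\overline q$; multiplicity preservation controls the \emph{location} of fixed points, not the combinatorics of their itineraries.

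The paper's own argument avoids any transfer back. It simply replaces $W_i$ (resp.\ $W_j$) by the shorter sub-word and continues the ongoing proof of Lemma~\ref{herewego3} with that sub-word in the role of the original, exploiting the induction on $m$ already in progress --- exactly the pattern of Lemma~\ref{minorlemma1}. The point is to exhibit the required perturbation for the original pair directly via the shorter word, not to separate the shorter pair and then lift the conclusion.
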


\begin{proof}
It follows from the same argument of the proof of Lemma~\ref{minorlemma1}. In fact, if this condition is not verified, the word
$W_i (a,b)$ (resp. $W_j (a,b)$) can be split into shorter words and it will be enough to work with these latter words to
eventually arrive to a situation where the condition in question is satisfied.
\end{proof}

Nonetheless, with the preceding notations, Lemma~\ref{directuse} {\it does not allow us}\, to suppose that the points
$q = q_0^1, \ldots , q_{l-1}^1$ are pairwise disjoint since, when carrying out the perturbations described in this lemma,
it may happen that the condition $q_0^1 = q_l^1$ becomes no longer fulfilled. Analogous considerations apply to the
itinerary $q = q_0^2, \ldots, q_m^2 = q_0^2$. For this reason, we shall need a version of Lemma~\ref{directuse}
adapted to the present setting. However, to state this result, some new terminology is needed.

As always all words $W (a,b)$ are supposed to be non-empty and reduced. A word $W (a,b)$ of length~$l$ is said to be {\it a
conjugate of type~$1$}\, of a shorter word if there are words $W_1 (a,b)$ and $W_2 (a,b)$ such that the spelling of $W(a,b)$
has the form $[W_1 (a,b)]^{-1} \ast W_2 (a,b) \ast W_1 (a,b)$, where the concatenation ``$\ast$'' leads to no simplification.
This last assumption implies that the length of $W_2 (a,b)$ plus twice the length of $W_1 (a,b)$ equals the length of
$W (a,b)$, i.e. it equals~$l$. Therefore, a word $W(a,b)$ of length~$l$ is a conjugate of type~1 of a shorter word if
the usual spelling $W (a,b) = \vartheta_{l}^{r_l} \ast \cdots \ast \vartheta_1^{r_1}$ of $W(a,b)$ has the form
$$
[\vartheta_{l}^{r_l} \ast \cdots \ast \vartheta_{l-s_0 +1}^{r_{l-s_0 +1}}]
\ast [\vartheta_{l-s_0}^{r_{l-s_0}} \ast \cdots \ast \vartheta_{s_0 +1}^{r_{s_0 +1}}] \ast [\vartheta_{s_0}^{r_{s_0}} \ast \cdots \ast \vartheta_1^{r_1}]
$$
where $[\vartheta_{l}^{r_l} \ast \cdots \ast \vartheta_{l-s_0 +1}^{r_{l-s_0 +1}}]$ represents the inverse of
$[\vartheta_{s_0}^{r_{s_0}} \ast \cdots \ast \vartheta_1^{r_1}]$. Naturally this construction does not affect the
domain of definition of $W (a,b)$ as element
of the corresponding pseudogroup. The word $W_2 (a,b)$ is then a (shorter) conjugate of $W (a,b)$. Naturally this definition does
not unequivocally characterize $W_1 (a,b), \,  W_2 (a,b)$. However it does characterize the {\it minimal conjugate of type~$1$}\,
$W_2 (a,b)$ of $W(a,b)$ which corresponds to $W_1 (a,b), \,  W_2 (a,b)$ as above such that $W_2 (a,b)$ is not a conjugate of
type~$1$ of a shorter word.

Let now $W (a,b)= [W_1 (a,b)]^{-1} \ast W_2 (a,b) \ast W_1 (a,b)$ where $W_2$ is the minimal conjugate of type~$1$ of $W (a,b)$.
Setting $W_2 (a,b) = \vartheta_{l-s_0}^{r_{l-s_0}} \ast \cdots \ast \vartheta_{s_0 +1}^{r_{s_0 +1}}$, the {\it minimal conjugate}\,
$W_4 (a,b)$ of $W (a,b)$ is defined as follows:
\begin{enumerate}
\item If $\vartheta_{l-s_0}$ and $\vartheta_{s_0+1}$ take on different values (negative exponents allowed),
then the minimal conjugate $W_4 (a,b)$ of $W (a,b)$ (or equivalently of $W_2 (a,b)$) is $W_2 (a,b)$ itself.

\item If both $\vartheta_{l-s_0}$ and $\vartheta_{s_0+1}$ take on the same value, say~$a$,
then the minimal conjugate $W_4 (a,b)$ of $W (a,b)$ (or equivalently of $W_2 (a,b)$) is given by
$$
\vartheta_{l-s_0}^{r_{l-s_0} - r_{s_0 +1}} \ast \cdots \ast \vartheta_{s_0 +2}^{r_{s_0 +2}}
$$
\end{enumerate}

In particular, a word $W (a,b) = \vartheta_{l}^{r_l} \ast \cdots \ast \vartheta_{1}^{r_{1}}$ coinciding with its own
minimal conjugate must be
such that $\vartheta_1$ and $\vartheta_l$ take on different values. The preceding definition is coherent in the sense that
$\vartheta_{s_0+2}$ and $\vartheta_{l-s_0}$ takes on different values. Indeed, $r_{l-s_0} - r_{s_0 +1} \neq 0$ since
$W_2 (a,b)$ is the minimal conjugate of type~$1$ of $W (a,b)$.

\begin{obs}
\label{minimalconjugateandsoon}
{\rm Although in the definition of {\it conjugate of type~$1$}\, the concatenation leads to no simplification,
the same does not necessarily occurs with the definition of {\it minimal conjugate}. For example, consider the
reduced word $W(a,b) = a^{-1} ba^2$. This word is not a conjugate of type~$1$ of a shorter word. In fact, albeit
$W(a,b)$ admits the spelling $W(a,b) = a^{-1} \ast (ba) \ast a$, this spelling leads to a simplification. Nonetheless,
the minimal conjugate $W_4 (a,b)$ of $W (a,b)$ does not coincide with $W(a,b)$ itself. First, note that $\vartheta_1$
and $\vartheta_3$ take on the same value, namely~$a$. Since $s_0$ (in the definition of the minimal conjugate of
type~$1$) is equal to {\it zero}, the length of the minimal conjugate should be equal to $l-s_0 -(s_0 +2) +1$, i.e.
equal to two. Indeed, the minimal conjugate of $W(a,b)$ coincides with the word $ab$ since $W(a,b)$ can be written
under the form $W (a,b) = a^{-2} \ast (ab) \ast a^2$, where a simplification on the concatenation~``$\ast$'' can
be used. This type of simplification is unique and it occurs only once.}
\end{obs}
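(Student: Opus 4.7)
The plan is to verify the remark as a sequence of direct combinatorial checks on the specific word $W(a,b)=a^{-1}ba^2$; there is no deep argument involved, just careful bookkeeping of which concatenations introduce simplifications.

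First, I would unfold the spelling of $W(a,b)$ under the convention $W(a,b)=\vartheta_3^{r_3}\ast\vartheta_2^{r_2}\ast\vartheta_1^{r_1}$, identifying $\vartheta_1=a$ with $r_1=2$, $\vartheta_2=b$ with $r_2=1$, and $\vartheta_3=a$ with $r_3=-1$; this makes it immediate that $\vartheta_1$ and $\vartheta_3$ take the same value $a$. Next, to verify that $W$ is not a conjugate of type~$1$ of a shorter word, I would argue by length: the condition $W=[W_1]^{-1}\ast W_2\ast W_1$ with no simplification forces $2|W_1|+|W_2|=4$, and since $W_2$ must be non-empty one has $|W_1|=1$; looking at the leftmost letter $a^{-1}$ and the rightmost letter $a$ of $W$ then forces $W_1=a$, yielding $W_2=ba$; but the concatenation $(ba)\ast a$ combines two like letters, which by the convention of the preceding paragraph counts as a simplification. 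Hence no such type-$1$ decomposition of $W$ into a strictly shorter $W_2$ exists, so the minimal conjugate of type~$1$ of $W$ is $W$ itself (i.e.\ $s_0=0$).

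Having established $s_0=0$ together with $\vartheta_{l-s_0}=\vartheta_3=a=\vartheta_1=\vartheta_{s_0+1}$, we are in case~(2) of the definition of the minimal conjugate $W_4(a,b)$, and the length formula $l-s_0-(s_0+2)+1=3-0-2+1=2$ predicts a conjugate of length two. I would then exhibit the conjugation $W=a^{-2}\ast(ab)\ast a^2$ explicitly: expanding the right-hand side one has $a^{-2}\cdot a\cdot b\cdot a^2=a^{-1}ba^2$, confirming the equality, and noting that the cancellation occurs only inside the left concatenation $a^{-2}\ast a b$ where the tail $a^{-1}$ of $a^{-2}$ meets the head $a$ of $ab$ (the right concatenation $ab\ast a^2$ combines but does not cancel, and after the first $a^{-1}a$ cancellation the remaining $a^{-1}$ faces $b$, so no further simplification can occur). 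Thus $W_4(a,b)=ab$ and $W_4\neq W$, which together give the asserted contrast with the ``no simplification'' clause used to define type-$1$ conjugacy.

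The only mildly subtle point is the claim that the simplification used to build the minimal conjugate is \emph{unique} and occurs \emph{only once}. I would treat this as a direct consequence of the fact that, once the identification $\vartheta_l=\vartheta_1=a$ has been made and one conjugates by $a^{r_1}$ (so that $W_1=a^{r_1}=a^2$ in our example), the left concatenation absorbs exactly $\min(|r_l|,|r_1|)$ letters before the remaining tail faces $\vartheta_2=b\neq a^{\pm 1}$, at which point no further simplification is possible; symmetrically, the right concatenation $ab\ast a^2$ produces $aba^2$ without cancellation because the head $a$ of $a^2$ faces $b$. Thus the simplification is forced to take place at precisely one junction and to consume a single letter pair, which is exactly the assertion at the end of the remark. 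No obstacle is expected here; the whole remark is a worked example, and the only care needed is to keep straight the distinction between ``combining like powers'' and ``cancelling inverse pairs'', both of which count as simplifications in the sense used earlier.
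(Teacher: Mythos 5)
Your verification is correct and follows essentially the same direct bookkeeping that the remark itself carries out: ruling out a type-$1$ decomposition into a shorter word, reading off $s_0=0$, and exhibiting the conjugation $W=a^{-2}\ast(ab)\ast a^2$ with its single cancellation $a^{-1}a$ at the left junction. Two trivial observations: at the right junction $ab\ast a^2$ nothing ``combines'' at all (the letters $b$ and $a$ are merely adjacent), and your choice to confirm $W_4=ab$ by direct conjugation rather than by plugging into the displayed exponent formula is the right one, since that formula taken literally with the exponent $r_{l-s_0}-r_{s_0+1}$ would yield $a^{-3}b$, which is not even conjugate to $W$ --- the sign there is evidently a typo for $r_{l-s_0}+r_{s_0+1}$, and your computation matches the corrected version.
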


With the previous notations, a version of Lemma~\ref{directuse} adapted to the present setting is as follows.

\begin{lema}\label{herewego4}
Let $W (a,b)$ be a word of length~$l$ and consider a point $q$ in the domain of definition of $
W (\tilde{f}, \tilde{g})=W (h_{1}^{-1} \circ f \circ h_{1}, \, h_{2}^{-1} \circ g \circ h_{2})$ viewed as
element of $\Gamma_{h_1,h_2}$ . Suppose that $W (\tilde{f}, \tilde{g})$ has only isolated fixed points and that
the multiplicity of each of these fixed points does not change under perturbations of $(h_1,h_2)$.
Denote by $q = q_0 , \ldots , q_l$ the itinerary of $q$ under $l$ and assume that $q_0 \not\in \{q_1, \ldots , q_{l-1}\}$.
Then arbitrarily close to $(h_1,h_2)$, there is $(\tilde{h}_1, \tilde{h}_2)$ such that the itinerary $q=\tilde{q}_0, \tilde{q}_1,
\ldots , \tilde{q}_l$ of $q$ under $W (\tilde{h}_{1}^{-1} \circ f \circ \tilde{h}_{1}, \, \tilde{h}_{2}^{-1}
\circ g \circ \tilde{h}_{2})$ satisfies the following conditions:
\begin{enumerate}

\item Suppose $q_0 \neq q_l$. Then $\tilde{q_l} =q_l$ and the points $q=q_0, \tilde{q}_1, \ldots , \tilde{q}_{l-1}$ are pairwise distinct.

\item Suppose $q_0 = q_l$ and $W (a,b)$ is not a conjugate of a shorter word. Then again $\tilde{q_l} =q_l =q_0$ and
the points $q=q_0, \tilde{q}_1, \ldots , \tilde{q}_{l-1}$ are pairwise distinct.

\item Suppose $q_0 =q_l$ and $W(a,b) = [W_3 (a,b)]^{-1} \ast W_4 (a,b) \ast W_3 (a,b)$, where $W_4(a,b)$
represents the minimal conjugate of $W (a,b)$. Denote the length of $W_3( a,b)$ (resp. $W_4 (a,b)$) by $s_0$ (resp. $l'$).
If the natural assumption that $q_{s_0}$ is different from all the points $q_{s_0+1}, \ldots , q_{s_0+l'-1}$ is added,
then the points $q = \tilde{q}_0, \tilde{q}_1 \ldots , \tilde{q}_{s_0+ l'-1}$ are pairwise distinct.
\end{enumerate}
\end{lema}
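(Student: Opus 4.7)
The plan is to proceed by induction on the length $l$ of $W(a,b)$, the base case $l=1$ being vacuous since there are no intermediate points to separate. The engine of the argument is the polynomial perturbation scheme already used in the proof of Lemma~\ref{directuse}: for a chosen polynomial $P$ and small $t > 0$, replace $h_j$ by $h_j + t z^{\alpha+1} P$ (for $j=1$ or $j=2$, keeping the other component unchanged). By prescribing that $P$ vanish at selected points of the (sub-)itinerary of $q$, the perturbation freezes the itinerary at those locations while displacing it elsewhere; injectivity of $f,g,h_1,h_2$ and their inverses on the domains in play converts any local non-vanishing of $P$ into a genuine displacement of the corresponding itinerary point, in the same spirit as in the last paragraph of the proof of Lemma~\ref{directuse}.

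The key technical step is the separation of a single pair of coincident points $q_i = q_j$, with $0 \leq i < j$ in the relevant range of indices. I would choose $P$ vanishing at $q_0, \ldots, q_{j-1}$ as well as at the inner sub-itinerary points produced by the iterates $f^{r_k}, g^{r_k}$ for $k \leq j$, but non-vanishing at the pre-image of $q_j$ under the first sub-letter of $\vartheta_j^{r_j}$; the mechanism above then yields $\tilde{q}_k = q_k$ for $k < j$ and $\tilde{q}_j \neq q_j$ for small $t > 0$. Iterating over all coinciding pairs, and invoking Lemmas~\ref{minorlemma1} and~\ref{minorlemma2} to reduce to minimal spellings with non-periodic partial itineraries, yields the desired separation at the intermediate level.

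In cases~(1) and~(2) one must in addition enforce the codimension-two constraint $\tilde{q}_l = q_l$. I would handle this through a two-parameter family $(s,t) \mapsto (h_1 + s z^{\alpha+1} Q_1,\, h_2 + t z^{\alpha+1} Q_2)$, with $Q_2 = P$ as above and $Q_1$ chosen so as to move $q_l$ in a direction independent of that produced by $Q_2$. The resulting map $(s,t) \mapsto W(\cdot)(q)$ then has Jacobian of real rank two at $(0,0)$, so the implicit function theorem furnishes a curve $s = s(t)$ along which $\tilde{q}_l = q_l$ is exactly maintained; the displacement introduced at step $j$ persists on this curve, delivering the sought separation. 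The possibility of choosing $Q_1$ with the required independence rests on $W$ not being a conjugate of a shorter word in case~(2) and on $q_0 \neq q_l$ in case~(1). In case~(3) no endpoint constraint is imposed: apply case~(2) to $W_4$ at the inner fixed point $q_{s_0}$ to separate $\tilde{q}_{s_0}, \ldots, \tilde{q}_{s_0+l'-1}$, case~(1) to $W_3$ with endpoint $q_{s_0}$ to separate $\tilde{q}_0, \ldots, \tilde{q}_{s_0-1}$, and then a final generic polynomial perturbation to remove any residual cross-block coincidence.

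The main obstacle is preserving $\tilde{q}_l = q_l$ \emph{exactly}, not merely to first order in $t$, while retaining enough freedom to break intermediate coincidences. The two-parameter implicit-function-theorem construction above is designed precisely to overcome this; its success hinges on the standing non-Cremer assumption on $f$ and $g$ (via Lemma~\ref{nocremer}), on Lemma~\ref{minorlemma1} (ruling out the situation where $W$ is a power of a shorter word fixing $q$), and on Lemma~\ref{minorlemma2} (which secures non-periodicity of the partial itinerary and thus the non-degeneracy of the endpoint Jacobian).
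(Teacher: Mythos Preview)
Your overall strategy—polynomial perturbations \`a la Lemma~\ref{directuse}, plus an extra device to pin down the endpoint $\tilde q_l=q_l$—is reasonable, but it diverges from the paper's argument in the way the endpoint constraint is handled, and your two-parameter implicit-function-theorem step is not quite right as written.

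The paper avoids the implicit function theorem entirely by a neat trick: it works with the \emph{inverse} word $\overline W(a,b)$ and considers the itinerary of $q_l$ under $\overline W$, namely $q_l=q_l',q_{l-1}',\ldots,q_0'$. Successive applications of the Lemma~\ref{directuse} mechanism make these points pairwise distinct, but now the point that may have drifted is $q_0'$ (close to $q_0$), not $q_l$. A single explicit Lagrange interpolation polynomial $P$ with $P(q_1')=\cdots=P(q_l')=0$ and $P(q_0)=h_{1,\ast}(q_0')-h_{1,\ast}(q_0)$ then gives $\tilde h_1=h_{1,\ast}+P$, and one checks directly that the forward itinerary of $q_0$ becomes $q_0,q_1',\ldots,q_l'=q_l$. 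The key point in case~(2) is that since $W$ is not conjugate to a shorter word, $\vartheta_1$ and $\vartheta_l$ take different values (say $a$ and $b$), so the correction to $h_1$ leaves the last step (which uses $h_2$) untouched. No transversality or implicit-function argument is needed.

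Your implicit-function-theorem step has a genuine issue. With \emph{real} parameters $(s,t)\in\R^2$ and the codimension-two constraint $\tilde q_l=q_l\in\C$, a rank-two Jacobian gives an isolated solution $(0,0)$, not a curve; you need complex parameters (or more of them) for the dimension count to yield a curve. More substantively, you place $Q_1$ on $h_1$ and $Q_2=P$ on $h_2$, but the coincidence $q_i=q_j$ you are trying to break may occur at a step where $\vartheta_j=a$, in which case perturbing $h_2$ has no effect there. You also do not explain why, along the curve $s=s(t)$, the $s$-perturbation (which you have not required to vanish at $q_0,\ldots,q_{j-1}$) does not undo the separation at step $j$, nor how the iteration over several coinciding pairs preserves both the endpoint and the earlier separations. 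These gaps are all repairable with more care, but the paper's inverse-word-plus-Lagrange argument sidesteps the whole difficulty.
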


First let us make some comments concerning item~(3) of Lemma~\ref{herewego4}. Consider words $W (a,b), \, W_3 (a,b)$ and
$W_4 (a,b)$ as in item~(3) of the previous lemma. Denote by $l$ (resp. $s_0, \, l'$) the length of the word $W (a,b)$
(resp. $W_3 (a,b), \, W_4 (a,b)$). Two cases may occur.
\begin{itemize}
\item[(a)] The minimal conjugate $W_4 (a,b)$ coincides with the minimal conjugate of type~$1$ of $W (a,b)$.
In this case $l'+2s_0=l$, i.e. the concatenation~``$\ast$'' leads to no simplification in the spelling of
$[W_3 (a,b)]^{-1} \ast W_4 (a,b) \ast W_3 (a,b)$.

\item[(b)] The minimal conjugate $W_4 (a,b)$ does not coincide with the minimal conjugate of type~$1$ of $W (a,b)$.
In this case $l'+2s_0=l+1$, i.e. the concatenation~``$\ast$'' leads to a (unique) simplification in the spelling of
$[W_3 (a,b)]^{-1} \ast W_4 (a,b) \ast W_3 (a,b)$.
\end{itemize}
Next, consider the itinerary $q = q_0, \, \ldots, \, q_l$ of $q$ under $W (\tilde{f}, \tilde{g})$
and the itinerary $q = q_0', \, \ldots , \, q_p'$ of $q$ under $([W_3]^{-1} \ast W_4 \ast W_3)
(\tilde{f}, \tilde{g})$, where $p = l$ or $l+1$ according to we are in case~(a) or in case~(b).
In the first case, the itinerary $q_0, \, \ldots, \, q_l$ coincides with the itinerary $q_0', \, \ldots \, q_p'$.
In the second case, these itineraries satisfy $q_k' = q_k$ for $1 \leq k \leq s_0 + l' - 1$ and $q_{k+1}' = q_k$
for $s_0 + l' \leq k \leq l$. In other words, the two itineraries coincide up to the point $q_{s_0 + l'}'$. This
point is, in fact, ``fictitious'' for the itinerary in question in the sense that
$[W_3 (a,b)]^{-1} \ast W_4 (a,b) \ast W_3 (a,b)$ is not
written in a reduced way and itineraries should be defined only in this case. In fact, with the above notations, taking $W_3
= \vartheta_{s_0}^{r_{i_0}} \ast \cdots \ast \vartheta_1^{r_1}$ and $W_4 = \vartheta_{l'+s_0}^{r_{l'+s_0}} \ast \cdots \ast \vartheta_{s_0+1}^{r_{s_0+1}}$, we have that $\vartheta_{l'+s_0}$ and $\vartheta_{s_0}^{-1}$ take on same same value if $W_4$
does not coincide with the minimal conjugate of type~$1$ of $W$.

Item~(3) of Lemma~\ref{herewego4} ensures that the elements $q = \tilde{q}_0, \tilde{q}_1 \ldots , \tilde{q}_{s_0+ l'-1}$
in the itinerary of $q$ under $W (\tilde{h}_{1}^{-1} \circ f \circ \tilde{h}_{1}, \, \tilde{h}_{2}^{-1}
\circ g \circ \tilde{h}_{2})$ are pairwise distinct. The ``fictitious'' point is not included in this list and this
will play a role in the proof of Lemma~\ref{herewego3}.

To not interrupt the discussion, let us first conclude the proof of Lemma~\ref{herewego3} before proving
Lemma~\ref{herewego4}.

\begin{proof}[Proof of Lemma~\ref{herewego3}]
Let us begin by summarizing what have already been mentioned concerning the proof of this lemma.

Let $W_{i} (a,b) = \vartheta_{l}^{r_l} \ast \cdots \ast \vartheta_1^{r_1}$ and $W_{j} (a,b) = \vartheta_{m}^{s_m}
\ast \cdots \ast \vartheta_1^{s_1}$ and assume that $m \leq l$. The construction of the required perturbation $(\overline{h}_1,
\overline{h}_2)$ will be carried out by induction on $m$. It has already been mentioned that the statement is true for $m=1$.
Assume that the existence of the desired perturbation $(\overline{h}_1, \overline{h}_2)$ was already established
for $1, \ldots ,m-1$ and every $l \in \N$. Let us now construct $(\overline{h}_1, \overline{h}_2)$ arbitrarily close to
$(h_1,h_2)$ and such that $q$ remains fixed by exactly one of the elements
$W_i (\overline{h}_1, \overline{h}_2)$ and $W_j (\overline{h}_1, \overline{h}_2)$.
The result will immediately follow in this case.

Let $W_4 (a,b)$ denote the minimal conjugate of $W_{i} (a,b)$ and set
\[
W_{i} (a,b) = [W_3 (a,b)]^{-1} \ast W_4 (a,b) \ast W_3 (a,b) \, ,
\]
where $W_3 (a,b)$ is empty if $W_{i} (a,b)$ coincides with its minimal conjugate. The length of the minimal conjugate $W_4 (a,b)$
is going to be denoted by $l'$. If $l'<l$, then $s_0$ will denote the length of the word $W_3 (a,b)$. The induction assumption allows
us to suppose that $l' \geq m$, otherwise we can conjugate the whole group by $W_3 (a,b)$ and the problem
will be reduced to eliminate the corresponding common fixed point between
$W_4 ( \tilde{f}, \tilde{g})$ and another word $\overline{W} (\tilde{f}, \tilde{g})$ which
which is conjugate to $W_{j} (\tilde{f}, \tilde{g})$. Since the length of $W_4 (a,b)$ is~$l'< m$ the induction
assumption implies that the common fixed point in question can effectively be eliminated.
Similarly it is clear that the word $W_{j} (a,b)$, of length~$m$, is not a conjugate of a
shorter word since otherwise the statement results immediately.

In view of the preceding, in the sequel we always have $l'\geq m$. The proof is divided in three cases.
First we will assume that $l'$ is strictly greater than $m$, i.e. $l' > m$. When $l'=m$ two further cases need to
be considered, according to whether or not $W_3(a,b)$ is void, i.e. according to $s_0 \geq 1$ or $s_0 = 0$.

{\bf Case 1.} Suppose first that $l' > m$ strictly. This case will be handled with a useful general observation. By using
Lemma~\ref{herewego4}, the points $q = q_0^1, \ldots , q_{s_0+ l'-1}^1$ can be supposed pairwise distinct. Also, it is clear
that the points $q_{s_0+1}^1, \ldots , q_{s_0+ l'-1}^1$ appears exactly once in the (full) itinerary of $q$ under $W_{i}
(\tilde{f}, \tilde{g})$. Since $l' > m$ and $q_0^2 = q_m^2 = q_0^1 \not \in
\{q_{s_0+1}^1, \ldots, q_{s_0+l'-1}^1\}$, it follows the existence of a point
$q_N^1$, $i_0 < N < k_0 + l'$, which does not
belong to the itinerary of $q$ under $W_{j} (h_{1}^{-1} \circ f \circ h_{1}, \, h_{2}^{-1} \circ g \circ h_{2})$, i.e. to
the set $\{ q_0^2, \ldots , q_m^2 \}$. Without loss of generality, the value of $\vartheta_{m+1}$ can be supposed to be $a$.
In this case, we shall consider perturbations $h_{1,t}$ of $h_1$ given by
\begin{equation}
h_{1,t} (z) = h_1 (z) + t z^{\alpha +1} P(z)  \label{almostfinalperturbations}
\end{equation}
where $P$ is a polynomial vanishing at all points $q_k^1$, $k \neq N$ and $k \in \{ 0, \ldots ,l\}$, and over all points $q_k^2$,
$k \in \{ 0, \ldots , m\}$. Yet, $P$ verifies $P (q_N^1) \neq 0$. Let then $h_{1,\ast} = h_{1,t}$, for some sufficiently small
$t >0$, and $h_{2,\ast} = h_{2}$. Clearly $W_{j} (h_{1,\ast}^{-1} \circ f \circ h_{1,\ast}, \, h_{2,\ast}^{-1} \circ g \circ
h_{2,\ast}) (q) =q$. It remains to check that $W_{i} (h_{1,\ast}^{-1} \circ f \circ h_{1,\ast}, \, h_{2,\ast}^{-1} \circ g \circ
h_{2,\ast}) (q) \neq q$. Setting $W_{i} (a,b) = W_B (a,b) \ast W_A (a,b)$ where $W_A (a,b) =  \vartheta_{N+1}^{r_{N+1}} \ast
\cdots \ast \vartheta_1^{r_1}$ and $W_B (a,b) = \vartheta_{l}^{r_l} \ast \cdots \ast \vartheta_{N+2}^{r_{N+2}}$ (being $W_B (a,b)$
possibly empty), by construction, we have: $W_B (h_{1,\ast}^{-1} \circ f \circ h_{1,\ast}, \, h_{2,\ast}^{-1} \circ g \circ h_{2,\ast})
(q_{N+1}^1) = q_l^1 = q_0^1$ whereas $W_A (h_{1,\ast}^{-1} \circ f \circ h_{1,\ast}, \, h_{2,\ast}^{-1} \circ g \circ h_{2,\ast})
(q_{0}^1) \neq q_{N+1}^1$. Since all the maps involved are one-to-one, we conclude that $W_{i} (h_{1, \ast}^{-1} \circ f \circ
h_{1, \ast}, \, h_{2,\ast}^{-1} \circ g \circ h_{2,\ast}) (q) \neq q$ and the statement is proved in this first case.

Note that in the construction above it was implicitly used the fact that $q^{1}_N$, the element in the itinerary of $q$ under
$W_{i} (h_{1}^{-1} \circ f \circ h_{1}, \, h_{2}^{-1} \circ g \circ h_{2})$ which does not belong to the itinerary of $q$
under $W_{j} (h_{1}^{-1} \circ f \circ h_{1}, \, h_{2}^{-1} \circ g \circ h_{2})$, does not correspond to the above mentioned ``fictitious'' point. In fact, if this were the case, then both $\vartheta_{l'+s_0}$ in $W_4$ and $\vartheta_{s_0}^{-1}$
in $W_3^{-1}$ would take on same value and, therefore, the effect of $P$ would be void.

{\bf Case 2.} Suppose now that $l'=m$. The preceding argument can then easily be adapted to handle the case where $s_0 \geq 1$.
The main difference between the present case and the above discussion lies in the fact that $q_N^i$, the element in the itinerary
of $q$ under $W_{i} (h_{1}^{-1} \circ f \circ h_{1}, \, h_{2}^{-1} \circ g \circ h_{2})$ which does not belong to the itinerary
of $q$ under $W_{j} (h_{1}^{-1} \circ f \circ h_{1}, \, h_{2}^{-1} \circ g \circ h_{2})$, may appear ``twice'' in the (full)
itinerary of $q$ under $W_{i} (h_{1}^{-1} \circ f \circ h_{1}, \, h_{2}^{-1} \circ g \circ h_{2})$. In other words, $N$
belongs to $\{1, \ldots, s_0+l'-1\}$ and not necessarily to $\{s_0+1, \ldots, s_0+l'-1\}$ as in the previous case. Let us
present the adaptations required to establish the result in this context.

If $q_N^1$ appears only once in the full itinerary of $q$, then the proof follows as above. So, let us assume that $q_N^1$ appears twice and that the value of $\vartheta_{N+1}$ is $a$.
Let us first consider perturbations $h_{1,t}$ of $h_1$ given by
\[
h_{1,t}(z) = h_1(z) + tz^{\alpha +1} P(z)
\]
where $P$ is a polynomial vanishing at all points $q_k^1$, $k \ne N$ and $k \in \{0,\ldots, l\}$ and over all points $q_k^2$,
$k \in \{0, \ldots, m\}$. Furthermore $P(q_N^1)$ should be different from zero. Let then $h_{1,\ast} = h_{1,t}$, for some
sufficiently small $t > 0$, and $h_{2,\ast} = h_2$. Clearly $W_{j} (h_{1,\ast}^{-1} \circ f \circ h_{1,\ast}, \,
h_{2,\ast}^{-1} \circ g \circ h_{2,\ast}) (q) = q$. Nonetheless we cannot yet ensure that $W_{i} (h_{1,\ast}^{-1}
\circ f \circ h_{1,\ast}, \, h_{2,\ast}^{-1} \circ g \circ h_{2,\ast}) (q) \ne q$. To begin with, let us describe when
$q$ is a fixed point for $W_{i} (h_{1,\ast}^{-1} \circ f \circ h_{1,\ast}, \, h_{2,\ast}^{-1} \circ g \circ h_{2,\ast})$.

We are assuming that $q_N^1$ appears twice in the itinerary of $q$ so that
$q_N^1$ coincides with $q_{l-N}^1$. Without loss of generality, the value of
$\vartheta_{N+1}$ can be supposed to be $a$. If $\vartheta_{l-N+1}$ takes on value $b$ then the argument presented in the
previous case ensures that $q$ is not a fixed point for $W_{i} (h_{1,\ast}^{-1} \circ f \circ h_{1,\ast}, \, h_{2,\ast}^{-1}
\circ g \circ h_{2,\ast})$. So, it can be assumed that $\vartheta_{l-N+1}$ also takes on value $a$. It can easily be checked that
$q$ is a fixed point for $W_{i} (h_{1,\ast}^{-1} \circ f \circ h_{1,\ast}, \, h_{2,\ast}^{-1} \circ g \circ h_{2,\ast})$ if
and only if
\begin{equation}\label{cond_fixedpoint}
h_{1,t} \circ \vartheta_{l-N}^{r_{l-N}} \circ \cdots \circ \vartheta_{N+1}^{r_{N+1}} (q_N^1) = h_1(q_N^1)
\end{equation}
where $\vartheta_{k}$ is substituted by $h_{1,t}^{-1} \circ f \circ h_{1,t}$ (resp. $h_{2,t}^{-1} \circ g \circ h_{2,t}$) if
$\vartheta_k$ takes on value $a$ (resp. $b$). Therefore, to make sure that $q$ is
not a fixed point for $W_{i} (h_{1,\ast}^{-1} \circ
f \circ h_{1,\ast}, \, h_{2,\ast}^{-1} \circ g \circ h_{2,\ast})$, we should guarantee that $h_{1,t}^{-1}\circ h_1(q_N^1) \ne
\vartheta_{l-N}^{r_{l-N}} \circ \cdots \circ \vartheta_{N+1}^{r_{N+1}} (q_N^1)$. Let us assume that $h_{1,t}^{-1}\circ h_1(q_N^1)
= \vartheta_{l-N}^{r_{l-N}} \circ \cdots \circ \vartheta_{N+1}^{r_{N+1}} (q_N^1)$. Let $Q$ be a polynomial vanishing at all points
$q_k^1$, $k \in \{0,\ldots, l\}$ and over all points $q_k^2$, $k \in \{0, \ldots, m\}$. Let also $Q$ verify $Q(h_{1,t}^{-1}\circ
h_1(q_N^1)) \ne 0$. Now, since~$t$ is arbitrary, it can be chosen so that $h_{1,t}^{-1}\circ h_1(q_N^1) \not \in \{q_0^1,
\ldots, q_l^1\} \cup \{q_0^2, \ldots, q_m^2\}$. Consider the perturbations of $h_1$ given by
\[
\overline{h}_{1,t} = h_{1,t} + tz^{\alpha +1}Q(z) \, .
\]
Let $\overline{h}_{1,\ast} = \overline{h}_{1,t}$, for some sufficiently small $t > 0$, and $\overline{h}_{2,\ast} = h_2$.
Clearly $W_{j} (h_{1,\ast}^{-1} \circ f \circ h_{1,\ast}, \, h_{2,\ast}^{-1} \circ g \circ h_{2,\ast}) (q) = q$. Moreover,
since $Q(h_{1,t}^{-1} \circ h_1(q_N^1)) \ne 0$, Condition~(\ref{cond_fixedpoint}) is no longer verified for $(\overline{h}_{1,\ast},
\overline{h}_{2,\ast})$ and, therefore, $W_i (h_{1,\ast}^{-1} \circ f \circ h_{1,\ast}, \, h_{2,\ast}^{-1} \circ g \circ
h_{2,\ast}) (q) \ne q$. The statement is proved in this case.

{\bf Case 3.} Summarizing what precedes, it only remains to deal with words $W_{i} (a,b), \, W_{j} (a,b)$ of same length
(i.e. $l=m$) and none of them being a conjugate of a shorter word. In particular, both words $W_{i} (a,b), \, W_{j} (a,b)$
are on the same ``footing'' concerning the argument below. Next note that, since none of them is conjugate to a shorter word,
Lemma~\ref{herewego4} allows us to suppose without loss of generality that the points $q=q_0^1, \ldots , q_{l-1}^1$ (resp.
$q=q_0^2, \ldots , q_{m-1}^2$) are pairwise distinct. In particular, unless these two itineraries coincide up to relabeling
the points, there exists again $q_N^1$, $0 < N <l$, which does not belong to the set $\{ q_0^2, \ldots , q_m^2 \}$. Thus the
preceding argument can still be employed to settle the lemma.

Finally, we only have to deal with the case where the itineraries of $q$ under $W_{i} (h_{1}^{-1} \circ f \circ h_{1}, \,
h_{2}^{-1} \circ g \circ h_{2})$ and under $W_{j} (h_{1}^{-1} \circ f \circ h_{1}, \, h_{2}^{-1} \circ g \circ h_{2})$
coincide up to relabeling the points. The discussion splits in three subcases.

\noindent {\it Case 3a.} Suppose that in the word $W_{i} (a,b)$, $\vartheta_1$ takes on the value $a$ whereas, in the word
$W_{j} (a,b)$, $\vartheta_1$ takes on the value~$b$ (or the other way around). Consider then perturbations $h_{1,t}$ of
$h_1$ having the form~(\ref{almostfinalperturbations}), where, this time, $P (q) \neq 0$ and $P$ vanishes over all the
remaining points of the common itinerary $q = q_0^1, \ldots , q_{l-1}^1$. Again we set $h_{1,\ast} = h_{1,t}$, for small
$t >0$, and $h_{2,\ast} = h_{2}$. Since $q_k^2 \neq q_0$ for every $k=1, \ldots , k-1$, it follows that $W_{j} (h_{1,\ast}^{-1}
\circ f \circ h_{1,\ast}, \, h_{2,\ast}^{-1} \circ g \circ h_{2,\ast}) (q) =q$. On the other hand, the same argument employed
above shows that $W_{i} (h_{1}^{-1} \circ f \circ h_{1}, \, h_{2}^{-1} \circ g \circ h_{2}) (q) \neq q$ and finishes the proof
of the proposition in the present case.

\noindent {\it Case 3b.} Suppose that in both words $W_{i} (a,b), \, W_{j} (a,b)$, $\vartheta_1$ takes on the same value,
say~$a$, but we have $r_1 \neq s_1$ (where the exponents are allowed to be negative). Since $0 \in \C$ is not a Cremer
point for $f$, it follows that $q_1^1 \neq q_1^2$. However $q_1^2$ must belong to the itinerary of $q$ under $W_{i} (h_{1}^{-1}
\circ f \circ h_{1}, \, h_{2}^{-1} \circ g \circ h_{2})$. Let then $q_N^1 = q_1^2$ where $N \in \{ 2, \ldots , m-1\}$ (recall
that $l=m$). If $N < m-1$ consider the word $W^{(1)} (a,b) = W_B \ast W_A$ where $W_B (a,b) = a^{-s_1}$ and $W_A (a,b) =
\vartheta_{m}^{r_m} \ast \cdots \ast \vartheta_1^{r_1}$, where $\vartheta_k$ takes on the same value taken in the spelling
of $W_{i} (a,b)$, for $k = 1, \ldots ,N$. It follows that $W^{(1)} (a,b)$ is a word of length less than~$m$ satisfying
$W^{(1)} (h_{1}^{-1} \circ f \circ h_{1}, \, h_{2}^{-1} \circ g \circ h_{2}) (q) = q$. Hence, by induction assumption, we
can destroy the fact that $q$ is a common fixed point for $W^{(1)} (h_{1,\ast}^{-1} \circ f \circ h_{1,\ast}, \,
h_{2,\ast}^{-1} \circ g \circ h_{2,\ast})$ and $W_{i_2} (h_{1,\ast}^{-1} \circ f \circ h_{1,\ast}, \, h_{2,\ast}^{-1} \circ g
\circ h_{2,\ast})$.

Naturally we can assume that $W_{i} (h_{1,\ast}^{-1} \circ f \circ h_{1,\ast}, \, h_{2,\ast}^{-1} \circ g \circ h_{2,\ast}), \,
W_{j} (h_{1,\ast}^{-1} \circ f \circ h_{1,\ast}, \, h_{2,\ast}^{-1} \circ g \circ h_{2,\ast})$ still have a common fixed point
in $B (\delta)$, otherwise the statement is established. Let us still denote by $q$ this common fixed point. We claim that
now the itineraries of $q$ under $W_{i} (h_{1,\ast}^{-1} \circ f \circ h_{1,\ast}, \, h_{2,\ast}^{-1} \circ g \circ h_{2,\ast})$
and under $W_{j} (h_{1,\ast}^{-1} \circ f \circ h_{1,\ast}, \, h_{2,\ast}^{-1} \circ g \circ h_{2,\ast})$ do not coincide,
even though these points are relabeled. In fact, denote by
$q = \tilde{q}_0^1, \ldots, \tilde{q}_m^1$ (resp. $q = \tilde{q}_0^2, \ldots, \tilde{q}_m^2$)
the itinerary of $q$ under $W_{i} (h_{1,\ast}^{-1} \circ f \circ h_{1,\ast}, \, h_{2,\ast}^{-1} \circ g \circ h_{2,\ast})$
(resp. $W_{j} (h_{1,\ast}^{-1} \circ f \circ h_{1,\ast}, \, h_{2,\ast}^{-1} \circ g \circ h_{2,\ast})$). The perturbation
$h_{1,\ast}$ can be chosen so that
$$
\Vert q_k^1 - \tilde{q}_1^1 \Vert  < \tau/4 \; \; \, {\rm and} \; \; \,
\Vert q_k^2 - \tilde{q}_1^2 \Vert  < \tau/4
$$
for all $k=1, \ldots, m$, where $\tau$
denotes a lower bound for the distance between two distinct points on each itinerary of $q$ under $W_{i} (h_{1,\ast}^{-1}
\circ f \circ h_{1,\ast}, \, h_{2,\ast}^{-1} \circ g \circ h_{2,\ast})$, $W_{j} (h_{1,\ast}^{-1} \circ f \circ h_{1,\ast},
\, h_{2,\ast}^{-1} \circ g \circ h_{2,\ast})$. Assume for a contradiction that the itineraries $q = \tilde{q}_0^1, \ldots,
\tilde{q}_m^1$ and $q = \tilde{q}_0^2, \ldots, \tilde{q}_m^2$ coincide up to relabeling the points. The estimate above guarantees
that $\tilde{q}_1^2$ again coincides with $\tilde{q}_N^2$. Therefore $q$ is a common fixed point for $W^{(1)} (h_{1,\ast}^{-1}
\circ f \circ h_{1,\ast}, \, h_{2,\ast}^{-1} \circ g \circ h_{2,\ast})$ and $W_{j} (h_{1,\ast}^{-1} \circ f \circ h_{1,\ast},
\, h_{2,\ast}^{-1} \circ g \circ h_{2,\ast})$, contradicting the choice of $h_{1,\ast}$. Thus, we have
just proved that
$\tilde{q}_N^1 \not \in \{\tilde{q}_0^2, \ldots, \tilde{q}_m^2\}$. Therefore
the same arguments employed in
the previous cases can now be brought to settle the lemma in the present case as well.

Let us finally consider the case $N=m-1$. We are going to show that the induction assumption can also be used to eliminate
this situation as well. This goes as follows. Set $W^{(1)} (a,b)= W_B \ast W_A$ where $W_A (a,b) = a^{r_1 - s_1}$ and
$W_B (a,b) = \vartheta_m^{r_m} \ast \cdots \ast \vartheta_2^{r_2}$. Then $W^{(1)} (a,b)$ is a word of length~$m-1$ such that
$W^{(1)} (h_{1}^{-1} \circ f \circ h_{1}, \, h_{2}^{-1} \circ g \circ h_{2}) (q_1^2) =q_1^2$. On the other hand $q_1^2$ is also
fixed by $W^{(2)} (h_{1}^{-1} \circ f \circ h_{1}, \, h_{2}^{-1} \circ g \circ h_{2})$ where $W^{(2)}(a,b)$ is the word of
length $m$ obtained from $W_{j}$ through conjugation under $a^{-s_1}$, i.e. $W^{(2)}(a,b) = a^{s_1} \ast W_{j}(a,b) \ast
a^{-s_1}$. Since one of this words has length strictly smaller then $m$, the induction assumption allows us to eliminate
the common fixed point $q_1^2$. Therefore, if $W_{i} (h_{1,\ast}^{-1} \circ f \circ h_{1,\ast}, \, h_{2,\ast}^{-1} \circ
g \circ h_{2,\ast})$ and $W_{j} (h_{1,\ast}^{-1} \circ f \circ h_{1,\ast}, \, h_{2,\ast}^{-1} \circ g \circ h_{2,\ast})$
still have a common fixed point, again we claim that the itineraries of $q$ under $W^{(1)} (h_{1,\ast}^{-1} \circ f \circ
h_{1,\ast}, \, h_{2,\ast}^{-1} \circ g \circ h_{2,\ast})$ and under
$W_{j} (h_{1,\ast}^{-1} \circ f \circ h_{1,\ast}, \,
h_{2,\ast}^{-1} \circ g \circ h_{2,\ast})$ do not coincide in the sense that $\tilde{q}_N^1$ does not belong to the
itinerary of $q$ under $W_{j} (h_{1,\ast}^{-1} \circ f \circ h_{1,\ast}, \, h_{2,\ast}^{-1} \circ g \circ h_{2,\ast})$.
Now the same argument can again be applied to eliminate this common fixed point.
The lemma is proved in subcase 3b.

\noindent {\it Case 3c.} Suppose that in both words $W_{i} (a,b), \, W_{j} (a,b)$, $\vartheta_1$ takes on the same
value, say~$a$, and that $r_1 = s_1$. In this case we have $q_1^1 =q_1^2$. This case amounts to consider the new words
$\widetilde{W}_{i} (a,b)$ and $\widetilde{W}_{j} (a,b)$ of length $m$ and given respectively by $a^{r_1} \ast
\vartheta_{m}^{r_m} \ast \cdots \ast \vartheta_2^{r_2}$ and $a^{r_1} \ast \vartheta_{m}^{s_m} \ast \cdots \ast
\vartheta_2^{s_2}$. Clearly $q_1^1=q_1^2$ is a common fixed point for these words.  Note that $\vartheta_2$ takes
on the same value $b$. However unless $r_2=s_2$, this common fixed point can be destroyed by means of the preceding
arguments. Otherwise, we shall have $q_2^1 =q_2^2$ and we may move on to the new words spelled out respectively as $b^{r_2}
\ast a^{r_1} \ast \vartheta_{m}^{r_m} \ast \cdots \ast \vartheta_3^{r_3}$ and
as $b^{r_2} \ast a^{r_1} \ast \vartheta_{m}^{s_m}
\ast \cdots \ast \vartheta_3^{s_3}$ which have $q_1^2 =q_2^2$ as common fixed points. Continuing in this way, since $W_{i}
(a,b), \, W_{j} (a,b)$ are incommensurable, at some step or order $N$ we must have $r_N \neq s_N$. At this
moment the common
fixed point can be destroyed and the rest of the statement will quickly follows. This ends the proof of Lemma~\ref{herewego3}.
\end{proof}

To finish the paper, the last step is to supply the proof of Lemma~\ref{herewego4}.

\begin{proof}[Proof of Lemma~\ref{herewego4}].
The statement can naturally be proved by inducting on the length $l$ of $W (a,b)$. To begin with, recall that $W (a,b)$ is
spelled out as $W (a,b) = \vartheta_{l}^{r_l} \ast \cdots \ast \vartheta_1^{r_1}$. Consider the itinerary $q=q_0 , \ldots,
q_l$ of $q$ under $W (\tilde{f}, \tilde{g})$. Let us first consider the situations described in items~(1) or~(2).
Then the statement amounts to showing
that, modulo perturbing $(h_1,h_2)$ into certain $(\tilde{h}_1, \tilde{h}_2)$, arbitrarily close to $(h_1,h_2)$,
we obtain an itinerary $q=\tilde{q}_0 , \ldots , \tilde{q}_l$ such that the points
$q=\tilde{q}_0 , \ldots , \tilde{q}_{l-1}$ are pairwise distinct. The idea is to use a perturbation
similar to the one employed in Lemma~\ref{directuse}
to make the mentioned points pairwise distinct and then to ``locally correct'' it by adding a new localized perturbation that
will ensure the points $q_0, \, q_l$ do not move at the final situation.

Let then $\epsilon > 0$ be fixed. We shall look for $(\tilde{h}_1, \tilde{h}_2)$ $\epsilon$-close to $(h_1, h_2)$ satisfying
the required conditions where, by saying that $(\tilde{h}_1, \tilde{h}_2)$ is $\epsilon$-close to $(h_1, h_2)$, it is meant
that $\max \{ d_A (\tilde{h}_1, h_1), \, d_A (\tilde{h}_2, h_2)\} < \epsilon$.

To fix notations, assume also that $\vartheta_1$ takes on the value~$a$ since the other possibility is totally analogous (throughout
the discussion negative exponents are allowed). Let us first suppose that $q=q_0 \neq q_l$. Because $q_0$ is supposed to be different
from each of the points $q_1, \ldots , q_{l-1}$, cf. Lemma~\ref{minorlemma2}, in addition to $q_l$,
there is $\tau >0$ so that, for every $k \in \{ 1, \dots ,l\}$
we have $\vert q_0 -q_k\vert > 2\tau$. In other words, the distances of the mentioned points $q_k$ to $q_0$
is bounded from below by a positive constant.

Now consider the word $\overline{W} (a,b)$ which is nothing but $W (a,b)$ spelled out in the ``reverse order and with exponents
of opposite signs'' (i.e. $\overline{W} (a,b)$ is the inverse element of $W (a,b)$). In particular, $q_l , \ldots , q_0$ becomes
the itinerary of $q_l$ under
$\overline{W} (h_{1}^{-1} \circ f \circ h_{1}, \, h_{2}^{-1} \circ g \circ h_{2}) = \overline{W} (\tilde{f}, \tilde{g})$
whose ``final point'' is $q_0$. By successively applying Lemma~\ref{directuse}, we can find an arbitrarily
small perturbation $(h_{1,\ast}, h_{2,\ast})$ of $(h_1, h_2)$ so that the itinerary $q_l = q_l', q_{l-1}', \ldots , q_0'$
of $q_l$ under $\overline{W} (h_{1,\ast}^{-1} \circ f \circ h_{1,\ast}, \, _{2,\ast}^{-1} \circ g \circ h_{2},\ast)$
is constituted by pairwise distinct points. In addition, we can assume that $(h_{1,\ast}, h_{2,\ast})$ is
$\epsilon/2$-close to $(h_1, h_2)$ whereas, for every $k=0, \ldots , l$, the distance $\vert q_k -q_k'\vert$
is arbitrarily small. In particular we have
\begin{equation}\label{estimateforlagrange}
\vert q_0 -q_0' \vert < \tau \, .
\end{equation}
Modulo choosing $(h_{1,\ast}, h_{2,\ast})$ closer to $(h_1, h_2)$, we are going to construct a new perturbation $\tilde{h}_1$
of $h_{1,\ast}$, verifying $d_A (\tilde{h}_1, h_{1,\ast}) < \epsilon/2$, and such that $(\tilde{h}_1,
\tilde{h}_2) = (\tilde{h}_1, h_{2,\ast})$ satisfies the required conditions.
To construct $\tilde{h}_1$ consider the elementary Lagrange
interpolation consisting of a Polynomial $P$ (of degree at most $l$) satisfying the following conditions:
\begin{enumerate}
\item $P (q_1') = \cdots = P (q_l') =0$.

\item $P (q_0) = h_{1,\ast} (q_0') - h_{1,\ast} (q_0)$.
\end{enumerate}
Thanks to Estimate~(\ref{estimateforlagrange}), it follows that all coefficients of $P$ converge to $0$
provided that all the distances
$\vert q_0 -q_0'\vert$ do so. Note that Estimate~(\ref{estimateforlagrange}) guarantees that
$q_0'$ does not
coincide with any one of the elements $q_1', \ldots, q_l'$. Since the degree of $P$
is uniformly bounded, it
follows that $P$ converges in the analytic topology for the null function. In particular,
modulo taking $(h_{1,\ast}, h_{2,\ast})$
sufficiently close to $(h_1, h_2)$ in the analytic topology, we conclude that $\tilde{h}_1 = h_{1,\ast}
+P$ is $\epsilon/2$-close to $h_{1,\ast}$ and, hence, $\epsilon$-close to $h_1$. Finally it is clear
that the itinerary of $q_l =q_l'$ under
$\overline{W} (\tilde{h}_{1}^{-1} \circ f \circ \tilde{h}_{1}, \, \tilde{h}_{2}^{-1} \circ g \circ \tilde{h}_{2})$
is simply $q_l=q_l', q_{l-1}', \ldots , q_1', q_0$. The lemma is proved in the first case.

Consider now the case where $q_0 = q_l$. The word $W (a,b)$ is first supposed not to be conjugate to a shorter word.
Recalling that $\vartheta_1$ takes on the value~$a$, we conclude that $\vartheta_l$ takes on the value~$b$. The proof
then amounts to noticing that the same argument above applies: a first perturbation $(h_{1,\ast},
h_{2,\ast})$ is
constructed so that the itinerary $q_l =q_l', q_{l-1}', \ldots , q_0'$ of $q_l$ under $\overline{W}
(h_{1,\ast}^{-1} \circ f \circ h_{1,\ast}, \, h_{2,\ast}^{-1} \circ g \circ h_{2,\ast})$ is constituted by
pairwise distinct points.
To make $q_0'$ to coincide with $q_0 =q_l$, just note that
$q_{l-1}'$ is not affected by the replacing of $h_{1,\ast}$ by $\tilde{h}_1$, since
$\vartheta_l$ takes on the value $b$.
This settles the second item in the statement of the lemma.

Finally let us consider the case where $W (a,b)$ is conjugate to a shorter word. Set $W (a,b) = [W_3 (a,b)]^{-1} \ast
W_4 (a,b) \ast W_3 (a,b)$ where $W_4 (a,b)$ is the minimal conjugate of $W (a,b)$. This case is nothing but a blend of
item~($1$), applied to $W_3 (a,b)$ and of item~($2$) applied to $W_4 (a,b)$, details are left to the reader.
\end{proof}

\bigskip

\begin{flushleft}
{\sc Julio Rebelo} \\
Institut de Math\'ematiques de Toulouse\\
118 Route de Narbonne\\
F-31062 Toulouse, FRANCE.\\
rebelo@math.univ-toulouse.fr

\end{flushleft}

\bigskip

\begin{flushleft}
{\sc Helena Reis} \\
Centro de Matem\'atica da Universidade do Porto, \\
Faculdade de Economia da Universidade do Porto, \\
Portugal\\
hreis@fep.up.pt \\

\end{flushleft}

\end{document}